\renewcommand{\cite}[1]{\citeauthor*{#1} [\citeyear{#1}]}
\definecolor{couleurCitations}{rgb}{0,0.65,0}
\definecolor{couleurRef}{rgb}{0.75,0,0}
\title{\textbf{Title}}                                   %
\def\BState{\State\hskip-\ALG@thistlm}
\newcommand*{\centerfloat}{%
  \parindent \z@
  \leftskip \z@ \@plus 1fil \@minus \textwidth
  \rightskip\leftskip
  \parfillskip \z@skip}
\providecommand{\keywords}[1]{\textbf{Keywords---} \textit{#1}}
\begin{document}

% paper title
% Titles are generally capitalized except for words such as a, an, and, as,
% at, but, by, for, in, nor, of, on, or, the, to and up, which are usually
% not capitalized unless they are the first or last word of the title.
% Linebreaks \\ can be used within to get better formatting as desired.
% Do not put math or special symbols in the title.
\title{Asymptotic Confidence Regions for Highdimensional Structured Sparsity.}
%
%
% author names and IEEE memberships
% note positions of commas and nonbreaking spaces ( ~ ) LaTeX will not break
% a structure at a ~ so this keeps an author's name from being broken across
% two lines.
% use \thanks{} to gain access to the first footnote area
% a separate \thanks must be used for each paragraph as LaTeX2e's \thanks
% was not built to handle multiple paragraphs
%

\author{Benjamin~Stucky\thanks{Seminar for Statistics, ETH Z\"urich, Switzerland}, Sara~van de Geer$^{*}$}
\maketitle

% As a general rule, do not put math, special symbols or citations
% in the abstract or keywords.
\begin{abstract}
 In the setting of high-dimensional linear regression models, we propose two frameworks for constructing pointwise and group confidence sets for penalized estimators which incorporate
 prior knowledge about the organization of the non-zero coefficients. This is done by desparsifying the estimator as in \citet{sara5} and \citet{sara6}, then using an 
 appropriate estimator for the precision matrix $\Theta$. In order to estimate the precision matrix a corresponding structured matrix norm penalty has to be introduced. 
 After normalization the result is an asymptotic pivot.
 The asymptotic behavior is studied and simulations are added to study the differences between the two schemes.
\end{abstract}

% Note that keywords are not normally used for peerreview papers.
\keywords{Asymptotic confidence regions, structured sparsity, \\high-dimensional linear regression, penalization.}

%%%% Definintions Theorems, propositions, tec.

\theoremstyle{plain}
\newtheorem{theorem}{Theorem}%[section]
\newtheorem{proposition}{Proposition}%[section]
\newtheorem{lemma}{Lemma}
\newtheorem{definition}{Definition}
\newtheorem*{remark}{Remark}
\newtheorem{assumptions}{Assumption}
\newtheorem{properties}{Properties}
\newtheorem{corollary}{Corollary}
\newtheorem{example}{Example}

%%%%%%%%%%%%%%%%%
% letters
\newcommand{\op}[1]{\operatorname{#1}}                     % gradi büechstabe
% abreviations
\newcommand{\1}{{\rm 1}\mskip -4,5mu{\rm l} }              % indicator function
\newcommand{\amin}{\mathop{\mathrm{arg\,min}}}             % argmin
\newcommand{\pen}{\op{pen}}                                % penalty
\newcommand{\h}[1]{\hat{#1}}                               % hat
\newcommand{\bh}{\hat{\beta}}                               % hat
\newcommand{\bt}{\widetilde{\beta}}                               % tilde
\newcommand{\bl}{\bar{\beta}}                               % tilde
\newcommand{\bc}{\stackrel{\circ}{\beta}}                               % tilde

\newcommand{\eh}{\hat{\epsilon}}                             % hat
\newcommand{\bha}{\hat{\beta}_{\alpha}}                       % hat
\newcommand{\bo}{\beta^{0}}                                    % hoch 0
\newcommand{\Bo}{B^{0}}
\newcommand{\Bh}{\hat{B}}

\newcommand{\0}[1]{{#1}^{0}}                               % hoch null
\newcommand{\dO}{\delta_{\Omega}}                          % delta Omega
\newcommand{\GO}{\Gamma_{\Omega}}                          % Tau Omega
\newcommand{\Obs}{\Omega(\beta_{S})}    
\newcommand{\Rp}{\mathbb{R}^{p}}                           % reele zahlen hoch p
\newcommand{\R}[1]{\mathbb{R}^{#1}}                        % reele zahlen hoch p
\newcommand{\Ost}{\Omega_{*}}                              % Omega star
\newcommand{\N}{\mathcal{N}}                               % Normal

\newcommand{\dell}{\delta\left[(\lambda\ltsr{\eh}+\lambda^{S}\ltsr{\epsilon})\Omega(\bh_{S}-\beta)+(\lambda\ltsr{\eh}-\lambda^{S^{c}}\ltsr{\epsilon})\Omega^{S^{c}}(\bh_{S^{c}})\right]}
\newcommand{\delll}{\left[\delta(\lambda+\lambda^{S})\Psi(\Bh_{S}-B_{S})+\delta(\lambda-\lambda^{S^{c}})\Psi^{S^{c}}(\Bh_{S^{c}})\right]}  % Omega star
% norms
\newcommand{\lt}[1]{  \lVert  #1    \rVert_{\ell_{2}}}  
\newcommand{\ltn}[1]{  \lVert  #1    \rVert  _{n}^{2}}            % l2 norm squared and divided by n
\newcommand{\ltsr}[1]{ \lVert  #1    \rVert  _{n}}        % l2 norm divided by square root of n
\newcommand{\nuc}[1]{\lVert  #1    \rVert  _{nuc}}              % l2 norm divided by n
\newcommand{\ltsrr}[1]{\lVert  #1    \rVert  _{n}}              % l2 norm divided by n
\newcommand{\slt}[2]{  \langle #1,#2 \rangle}          % skalar product l2 divided by n
\newcommand{\lo}[1]{  \lVert  #1    \rVert  _{1}}            % l1 norm
\newcommand{\linf}[1]{  \lVert  #1    \rVert  _{\infty}}            % l00 norm
\newcommand{\lqq}[1]{  \lVert  #1    \rVert  _{q}}            % lq quasinorm
\newcommand{\fr}[1]{ \lVert  #1    \rVert  _{F}^{2}}          % l00 norm
\newcommand{\frn}[1]{ \lVert  #1    \rVert  _{F}}          % l00 norm

\newcommand{\Prob}{\mathop{P}}

\newcommand{\supp}{\mathop{\mathrm{supp}}}
\newcommand{\tra}[1]{\operatorname{tr}\left(#1\right)}     
\newcommand{\Epsilon}{\mathcal{E}}

%%%%%%%%%%%%%% saras paper
\newcommand{\Rell}{\mathbb{R}}
\newcommand{\Nat}{\mathbb{N}}
\newcommand{\Z}{\mathbb{Z}}
\newcommand{\Q}{\mathbb{Q}}
\newcommand{\Var}{\mathop{Var}}
\newcommand{\Cor}{\mathop{Cor}}
\newcommand{\Cov}{\mathop{Cov}}
\newcommand{\Parcor}{\mathop{Parcor}}
\newcommand{\peff}{\mathrm{peff}}
\newcommand{\sign}{\mathop{sign}}
\newcommand{\argmax}{\mathop{\arg \max}\limits}
\newcommand{\argmin}{\mathop{\arg \min}\limits}
\newcommand{\eps}       {\varepsilon}
\newcommand{\PP} {{  \rm I\hskip-0.22em P}}
\newcommand{\bx}{{\bf X}}
\newcommand{\bu}{{\bf U}}
\newcommand{\by}{{\bf Y}}
\newcommand{\bz}{{\bf Z}}
\newcommand{\cb}{{\cal B}}
\newcommand{\ch}{{\cal H}}
\renewcommand{\bm}{{\cal B}_m}
\newcommand{\cf}{{\cal F}}
\newcommand{\cs}{{\cal S}}
\newcommand{\hcs}{\hat{\cal S}}
\newcommand{\cR}{{\cal R}}
\newcommand{\EE} {{\rm I\hskip-0.48em E}}
\newcommand{\ind}{{\rm I\hskip-0.58em 1}}

\algnewcommand\INPUT{\item[\textbf{Input:}]}%
\algnewcommand\OUTPUT{\item[\textbf{Output:}]}%

%last paper
\newtheorem*{thetwo}{Theorem 8}

\newsavebox\MBox
\newcommand\Cline[2][red]{{\sbox\MBox{$#2$}%
  \rlap{\usebox\MBox}\color{#1}\rule[-2.1\dp\MBox]{\wd\MBox}{1.3pt}}}

%%%%%%%%%%%%%%%%%%%%%%%%%%%
    %
%\maketitle               % 
%\nocite*                 %
\selectlanguage{english}  %
%%%%%%%%%%%%%%%%%%%%%%%%%%%

\section{Introduction}
We focus on the basic high dimensional linear regression model, which is at the core of understanding more complex models:
\begin{equation}\label{modell}Y=X\beta^{0}+\epsilon.\end{equation}
Here $Y \in \mathbb{R}^{n}$ is an observable response variable, $X$ is a given $n \times p$ design matrix with $p>>n$,
$\beta^{0}\in \mathbb{R}^{p}$ is a parameter vector of unknown coefficients and $\epsilon\in \mathbb{R}^{n}$ is unobservable noise.
Due to the high-dimensionality of the design the question arises as to find the solution to an underdetermined system. 
The idea to restrict ourselves to sparse solutions has become the new paradigm to solve this
problem for high-dimensional data.
In such a setting, the LASSO estimator (introduced by \citet{tib1}) is the most widely used method in 
pursuance of estimating the unknown parameter vector $\beta^{0}$, 
while avoiding the high-dimensional problem of overfitting:
$$\hat{\beta}_{\ell_{1}}:=\amin_{\beta\in\mathbb{R}^{p}}\left\{\ltn{Y-X\beta}+2\lambda_{L}\lo{\beta} \right\}.$$
The loss function is defined as $\ltn{Y-X\beta}:=\sum_{j =1}^{n}(Y-X\beta)_{j}^{2}/n$, and $\lo{\beta}$ denotes the $\ell_{1}$-norm. 
The main purpose of the added $\ell_{1}$-norm penalty is to achieve an entry-wise sparse $\hat{\beta}_{\ell_{1}}$ solution, while at the same time the least squares 
loss ensures good prediction properties. 
Furthermore the constant $\lambda_{L}>0$ is the penalty level, regulating the amount of sparsity introduced to the solution.

The $\ell_{1}$-norm penalty is a simple convex relaxation of the non-convex $\ell_{0}$ penalty ($\lVert\beta \rVert_{\ell_{0}}:= \# \{i:\beta_{i}\neq 0\}$).
Let us recall that the $\ell_{1}$-norm penalty does not promote any specific sparsity structure. %, due to the symmetrical and peaked nature of the $\ell_{1}$-norm unit ball (see \citet{tib1}). 
In other words the LASSO estimator does not assume anything about the organization of the non-zero coefficients.
In this sense the LASSO estimator does not incorporate any prior knowledge of the structure of the true unknown active set $S_{0}:=\{i: \beta_{0,i}\neq 0\}$. 
In practice however, prior knowledge is often available. Prior knowledge may emerge from physical systems or known biological processes. 
For the purpose of integrating the available prior information, the $\ell_{1}$-norm penalty needs to be replaced in such a way, that the new penalty reflects this knowledge. 
%In order not to completely ignore these prior informations we need to replace the $\ell_{1}$-norm penalty in such a way, that the penalty reflects this information. 
One can find many examples of such penalties and their
properties in the recently emerging literature on the sparsity structure of the unknown parameter vector, see for example \citet{bach1}, \citet{bach2}, \citet{mi1}, \citet{mi2}, \citet{mi3}. 
A more comprehensive overview can be found in \citet{obo1}.

% The square root LASSO was introduced by \citet{belloni2011square}. The square root LASSO alters the LASSO in a slight way to make the theoretical tuning parameter independent of
% the unknown variance of the errors
% \begin{equation}\label{eq:srlassoo}\hat{\beta}_{srL}:=\amin_{\beta\in\mathbb{R}^{p}}\left\{ \ltsr{Y-X\beta}+\lambda\lo{\beta} \right\}.\end{equation}
We will focus on norm penalties and therefore generalize the LASSO estimator to a large family of penalized estimators (see \citet{sara1} and \citet{benji1}), 
each with distinct properties to promote sparsity structures in the parameter vector:
\begin{equation} \label{sromega}\bh_{\Omega}:=\amin_{\beta\in\mathbb{R}^{p}}\left\{\ltn{Y-X\beta}+\lambda\Omega(\beta)\right\}.\end{equation}
Here $\Omega$ is any norm on $\mathbb{R}^{p}$ that reflects some aspects of the pattern of sparsity for the parameter vector $\beta^{0}$. Again for readability we let $\bh=\bh_{\Omega}$. 
We characterize the $\Omega$-norm in terms of its weakly decomposable subsets of $\mathbb{R}^{p}$. 
A weakly decomposable norm is in some sense
able to split up into two norms, one norm measuring the size of the vector on the active set and the other norm the size on its complement. 
The weakly decomposable norm itself reflects the prior information of the underlying sparsity.
\\
{\bf Notation:} Depending on the context, for a set $J \subset \{ 1 , \ldots , p \}$ and
a vector $\beta \in \mathbb{R}^{p}$ the vector $\beta_J$ is either the $|J|$-dimensional vector
$\{ \beta_j:\ j \in J\}$ or the $p$-dimensional vector $\{ \beta_j {\rm l} \{ j \in J \} : \ j=1 , \ldots , p \} $.
More generally, for a vector $w_J := \{ w_j :\ j \in J\}$, we use the same
notation for its extended version $w_J \in \mathbb{R}^{p}$ where $w_{j, J} =0 $ for all $j \notin J$. 
For a set ${\cal B}$ we let ${\cal B}_J= \{ \beta_J: \ \beta \in {\cal B} \} $.\\

The definition of a weakly decomposable norm is crucial to the following sections, so we introduce it as in \citet{sara1} or \citet{benji1}. This idea goes back to \citet{bach2}.
\begin{definition}[Weak decomposability]
 A norm $\Omega$ in $\mathbb{R}^{p}$ is called weakly decomposable for an index set $S\subset\{1,...,p\}$, if
 there exists another norm $\Omega^{S^{c}}$ on $\mathbb{R}^{|S^{c}|}$ such that
 \begin{equation}\label{weak}\forall \beta\in\mathbb{R}^{p}: \ \  \Omega(\beta_{S})+\Omega^{S^{c}}(\beta_{S^{c}})\leq \Omega(\beta).\end{equation}
\end{definition}
A set $S$ is called allowed if $\Omega$ is a weakly decomposable norm for this set.
From now on we use the notation $\Upsilon_{S}(\beta):=\Omega(\beta_{S})+\Omega^{S^{c}}(\beta_{S^{c}})$ the lower bounding norm from the weak decomposability definition and 
$\Lambda_{S}(\beta):=\Omega(\beta_{S})+\Omega(\beta_{S^{c}})$ the upper bounding norm from the triangle inequality. 
The weak decomposability now reads
$$\Upsilon_{S}(\beta)\leq\Omega(\beta)\leq \Lambda_{S}(\beta).$$
Therefore 
the $\Upsilon_{S}$-norm mimics the decomposability property of the $\ell_{1}$-norm $(\lo{\beta}=\lo{\beta_{S}}+\lo{\beta_{S^{c}}})$ for the set $S$.\\

For the LASSO estimator, most work up until recently has been focusing on point estimation among other topics, with not much focus on establishing uncertainty in high dimensional models. 
Interest has been growing rapidly on the very important topic of constructing confidence regions for the LASSO estimator,
see for example \citet{sara5}, \citet{sara6}, \citet{zhang}, \citet{monte} and \citet{nico1}. When it comes to confidence regions for structured sparsity estimators there has not yet been done much work to our knowledge.
The paper \citet{sara6} mentions one approach for group confidence regions for structured sparsity briefly, which we will develop further.

The main goal of this paper is therefore to construct asymptotic group confidence regions for structured sparsity estimators in two possible ways.
In order to do this, we introduce a de-sparsified version of the estimators in \eqref{sromega}, following the idea of \citet{sara5}. An appropriate estimation of the precision matrix will
be needed for the definition of a de-sparsified estimator. The estimation of the precision matrix can be done in two ways which are beneficial for the construction of asymptotic confidence regions.
These two frameworks differ in the structure of the penalty function. The theoretical behavior and the assumptions on the sparsity is studied. 
Furthermore, a simulation compares these two frameworks in the high dimensional case and outlines potential applications.

\section{De-sparsified $\Omega$ structured estimator}
For a given norm $\Omega(\cdot)$ on $\mathbb{R}^{p}$ we can determine its sparsity structure by listing all the subsets $\mathfrak{S}:=\{S_{1},...,S_{k}\}$ for which the norm is weakly decomposable. 
The estimator $\bh_{\Omega}$ \eqref{sromega} prefers to set the complement of any of the sets $S_{1}^{c},...,S_{k}^{c}$ to zero. Unfortunately the joint distribution of estimator \eqref{sromega}
is not easy to access. 
But it is possible to de-sparsify \eqref{sromega} and asymptotically describe the distribution of this new estimator. The essential idea for the de-sparsified estimator comes from the following 
lemma, which establishes a variation to the KKT conditions of $\bh_{\Omega}$, following directly from \citet{benji1}.
\begin{lemma}\label{t1}
 For the estimator defined in \eqref{sromega} with $\eh:=Y-X\bh$ the KKT conditions are
 $X^{T}\eh/n=\lambda\hat{Z},$
 where $\Omega^{*}(\hat{Z})\leq 1$ and $\hat{Z}^{T}\bh=\Omega(\bh)$.
\end{lemma}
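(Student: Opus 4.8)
The plan is to recognize that $\bh$ minimizes a convex objective and then invoke the first‑order optimality condition in subdifferential form. Write $F(\beta):=\ltn{Y-X\beta}+\lambda\Omega(\beta)$. Since $F$ is a sum of a differentiable convex quadratic and the convex (but nonsmooth) norm $\lambda\Omega$, the subdifferential splits additively, and $\bh$ minimizes $F$ if and only if $0\in\partial F(\bh)$, i.e.
$$\nabla_\beta\,\ltn{Y-X\bh}+\lambda\hat Z=0 \qquad\text{for some } \hat Z\in\partial\Omega(\bh).$$
So the whole statement reduces to (i) differentiating the loss and (ii) describing $\partial\Omega(\bh)$ explicitly.

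First I would compute the gradient of the smooth part. With $\ltn{Y-X\beta}=(Y-X\beta)^{T}(Y-X\beta)/n$, differentiation gives $\nabla_\beta\ltn{Y-X\beta}=-2X^{T}(Y-X\beta)/n$, which at $\bh$ equals $-2X^{T}\eh/n$ for $\eh=Y-X\bh$. Substituting into the optimality condition and folding the numerical constant into the penalty level — consistent with the paper's convention, where the LASSO penalty already appears as $2\lambda_L$ — yields the stationarity equation $X^{T}\eh/n=\lambda\hat Z$. It then remains only to unpack the abstract membership $\hat Z\in\partial\Omega(\bh)$ into the two stated conditions $\Omega^{*}(\hat Z)\le 1$ and $\hat Z^{T}\bh=\Omega(\bh)$.

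The crux is therefore the subdifferential characterization of a norm. I would establish that for any norm $\Omega$ with dual norm $\Omega^{*}(z):=\sup_{\Omega(\beta)\le 1}z^{T}\beta$,
$$\partial\Omega(\beta)=\{\,z:\ \Omega^{*}(z)\le 1,\ z^{T}\beta=\Omega(\beta)\,\}.$$
The inclusion $\supseteq$ follows from the generalized H\"older inequality $z^{T}\gamma\le\Omega^{*}(z)\Omega(\gamma)\le\Omega(\gamma)$: combined with $z^{T}\beta=\Omega(\beta)$ this gives the subgradient inequality $\Omega(\gamma)\ge\Omega(\beta)+z^{T}(\gamma-\beta)$ for every $\gamma$. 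For the reverse inclusion I would exploit positive homogeneity of $\Omega$: testing the subgradient inequality at $\gamma=2\beta$ and at $\gamma=0$ pins down $z^{T}\beta=\Omega(\beta)$, and substituting this back for arbitrary $\gamma$ forces $z^{T}\gamma\le\Omega(\gamma)$, i.e. $\Omega^{*}(z)\le 1$. Applying this at $\beta=\bh$ identifies $\hat Z$ exactly as required.

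I expect the only real obstacle to be convex‑analytic bookkeeping rather than anything deep: one must apply the dual‑norm description of $\partial\Omega$ uniformly (note that at $\bh=0$ the condition $\hat Z^{T}\bh=\Omega(\bh)$ is vacuous, leaving just $\Omega^{*}(\hat Z)\le 1$, so the statement remains correct) and must keep the scaling constant from differentiating the quadratic loss consistently absorbed into $\lambda$. Once the subdifferential identity is in place, the result is immediate substitution, which is why it \emph{follows directly from} \citet{benji1}.
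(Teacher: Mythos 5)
Your proposal is correct and is essentially the argument the paper delegates to the citation of \citet{benji1}: first-order optimality $0\in\partial F(\bh)$, plus the standard dual-norm characterization $\partial\Omega(\beta)=\{z:\Omega^{*}(z)\leq 1,\ z^{T}\beta=\Omega(\beta)\}$, which you verify correctly in both directions (including the degenerate case $\bh=0$). The only point worth watching is the factor $2$ from differentiating $\ltn{Y-X\beta}$ --- with the penalty written as $\lambda\Omega(\beta)$ in \eqref{sromega} the stationarity condition is literally $X^{T}\eh/n=(\lambda/2)\hat{Z}$, and you rightly note this constant must be absorbed into $\lambda$ to match the lemma's statement; this is a convention of the paper, not a gap in your argument.
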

Here $\Omega^{*}(\cdot)$ is another norm on $\mathbb{R}^{p}$ called the dual norm.
$$\Omega^{*}(\alpha):=\sup_{\beta\in\mathbb{R}^{p}, \Omega(\beta)=1}\beta^{T}\alpha.$$
Since $Y=X\beta^{0}+\epsilon$ and using the notation $\hat{\Sigma}:=X^{T}X/n$ we can write the KKT conditions as
$$\hat{\Sigma}(\bh-\beta^{0})+\lambda\hat{Z}=X^{T}\epsilon/n.$$
Suppose we have an appropriate surrogate for the precision matrix $\hat{\Theta}$, we get
$$\hat{\Theta}\hat{\Sigma}(\bh-\beta^{0})+\lambda\hat{\Theta}\hat{Z}=\hat{\Theta}X^{T}\epsilon/n, \text{ and}$$
$$\bh+\lambda\hat{\Theta}\hat{Z}-\beta^{0} = \hat{\Theta}X^{T}\epsilon/n + \Delta/\sqrt{n}$$
Here $\Delta:= \sqrt{n}(\hat{\Theta}\hat{\Sigma}-I)(\bh-\beta^{0})$ is the error term.
We define the de-sparsified $\Omega$ structured estimator as follows.
\begin{definition}
 The de-sparsified $\Omega$ structured estimator is 
 $$\hat{b}_{\Omega}:=\bh+\lambda\hat{\Theta}\hat{Z}.$$
\end{definition}

When $\Omega(\cdot)=\lo{\cdot}$ is the $\ell_{1}$-norm, and if we have a $\beta_{\ell_{1}}$ sparsity assumption of order $o(\sqrt{n}/\log(p))$, a reasonable sparsity 
assumption on the precision matrix 
and if we assume the errors to follow i.i.d. Gaussian distributions, then 
\citet{sara5} have shown that the de-sparsified $\ell_{1}$ structured estimator follows an asymptotic Gaussian distribution with an asymptotically negligible error term.

In order to get similar results for the $\Omega$ penalization, we need to discuss how to estimate the precision matrix $\hat{\Theta}$.
The main problem that arises is, that good estimation error bounds are only available expressed in the $\Upsilon_{S_{\star}}$-norm, where $S_{\star}$ is the unknown
oracle set from the main theorem in \citet{benji1}.
The next two sections give two different ways to estimate $\hat{\Theta}$ in
such a way that $\Delta$ is asymptotically negligible.

%The basic idea is to upper bound the error term with an operator matrix norm as follows:
%\begin{align*}
% \linf{\Delta}/\sqrt{n} &=     \linf{(\hat{\Theta}\hat{\Sigma}-I)(\bh_{\Omega}-\beta^{0})}\\
%                        &\leq  \lVert \hat{\Theta}\hat{\Sigma}-I \rVert  _{\infty,||\cdot||} ||\bh_{\Omega}-\beta^{0}||.
%\end{align*}
%Where for a $p\times p$ matrix $A$ we have $\lVert A \rVert  _{\infty,||\cdot||}:=\max_{x\in\mathbb{R}^{p}}\frac{\lVert Ax\rVert_{\infty}}{\lVert x\rVert}$, with $\lVert x\rVert$ a norm to be determined later.

\section{First framework: gauge confidence regions}\label{sec:g}
A way to construct an estimate for the precision matrix is to 
do $|J|$-wise regression with any fixed set $J\subset \{1,...,p\}$. $|J|$-wise regression is a very similar method as node-wise regression (introduced by \citet{nico2}), 
but instead of one node, we have simultaneously $|J|$ nodes. With this $|J|$-wise regression we try to capture the group interdependencies stored in the precision matrix.
This is why we require a multivariate model of the form
\begin{equation}\label{Jwise}\hat{B}_{J}:=\amin_{B_{J}\in \mathbb{R}^{|J^{c}|\times |J|}}\left(\nuc{X_{J}-X_{J^{c}}B_{J}}+\lambda_{J}\Psi(B_{J})\right).\end{equation}
The nuclear norm is defined as
$$\nuc{A}:= \tra{\sqrt{A^{T}A}}=\sum_{i=1}^{\min(n,|J|)}\sigma_{i}(A),$$ 
where $\sigma_{i}(A)$ are the singular values of a $n\times |J|$ matrix $A$ and for a square $m\times m$ matrix $B$
the trace function is defined as $\tra{B}:=\sum_{i=1}^{m}B_{i,i}$.
Furthermore the penalty is defined as 
\begin{equation}\label{psi1}\Psi(A):=\sum\limits_{j=1}^{|J|} g(A_{j}).\end{equation}
It is a matrix norm on $\mathbb{R}^{|J^{c}|\times|J|}$ (it is the dual matrix norm of an operator norm), that uses the 
computational cost effective $\ell_{1}$-norm on the columns together with another norm $g$ on $\mathbb{R}^{p}$.
Here $A_{j}\in\mathbb{R}^{p}$ is equal to the $j$-th column of the matrix $A$ on the set $J^{c}$, and $0$ on the set $J$. 
The norm $g$ is defined so that it lower bounds all $\Upsilon_{S}$-norms where $S\in \mathfrak{S}$ is any non trivial allowed set of the $\Omega$-norm. Furthermore the norm $g$ should satisfy 
the following reflection property
$$g(\beta_{f(J)})=g(\beta)\text{ }\text{ } \text{, where }\beta_{f(J)}:=\beta_{J}-\beta_{J^{c}}.$$
This is a natural condition on $g$, because for each allowed set $S$ we have
$$\Upsilon_{S}(\beta_{f(S)})=\Upsilon_{S}(\beta).$$
Where $\Upsilon_{S}$ is defined as $\Upsilon_{S}(\beta):= \Omega(\beta_{S})+\Omega^{S^{c}}(\beta_{S^{c}})$.
In order to construct the norm $g$ we construct a convex set where we will take the gauge function. Remark that $\min_{S\in\mathfrak{S}}\Upsilon_{S}(\cdot)$ is in general not a norm, 
therefore we need to take the convex hull. The convex
set is defined through 
$$\overline{B}:=\bigcup\limits_{S\text{ allowed}}B_{\Upsilon_{S}}, \text{ with } B_{\Upsilon_{S}}\text{ the unit ball of } \Upsilon_{S}-\text{norm, }$$
$$B_{g}:=\operatorname{Conv}\left(\overline{B}\cup \operatorname{flip}_{J}(\overline{B}) \right).$$
The function $\operatorname{flip}_{J}(\cdot)$ reflects a set along the hyperplane defined by the subset $J$. To be more precise for a subset $B\subset\mathbb{R}^{p}$ we have
$$\operatorname{flip}_{J}(B):=\{\gamma : \gamma_{J}=\beta_{J} \text{ and } \gamma_{J^{c}}=-\beta_{J^{c}}, \forall \beta\in B \}.$$
Then we can define its gauge function (also known as Minkowski functional) as follows:
$$g(x):=\inf\left(\lambda>0; \text{ }x\in \lambda B_{g}\right).$$
See Figure \ref{3fig1} for a 
graphical representation of the gauge function.
\begin{figure}[ht]
\centering
  \raisebox{-0.5\height}{\includegraphics[width = 6cm]{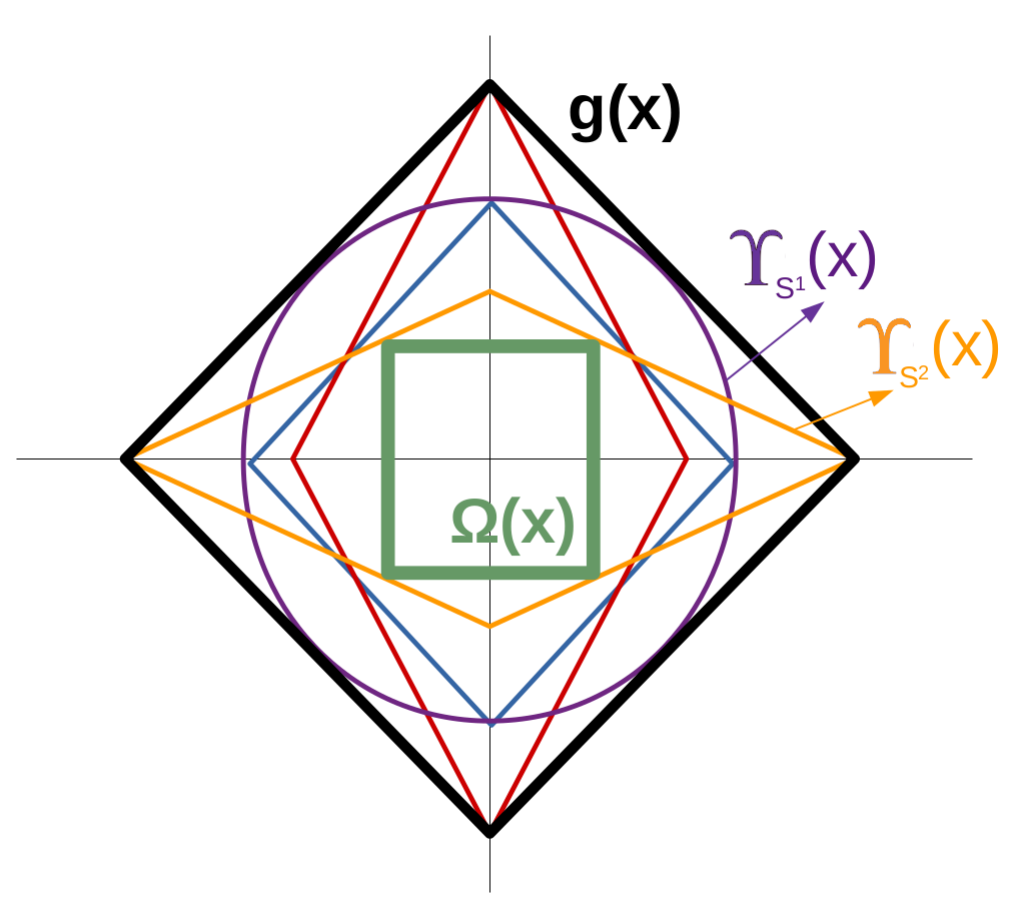}}
  \raisebox{-0.5\height}{\includegraphics[width = 6cm ]{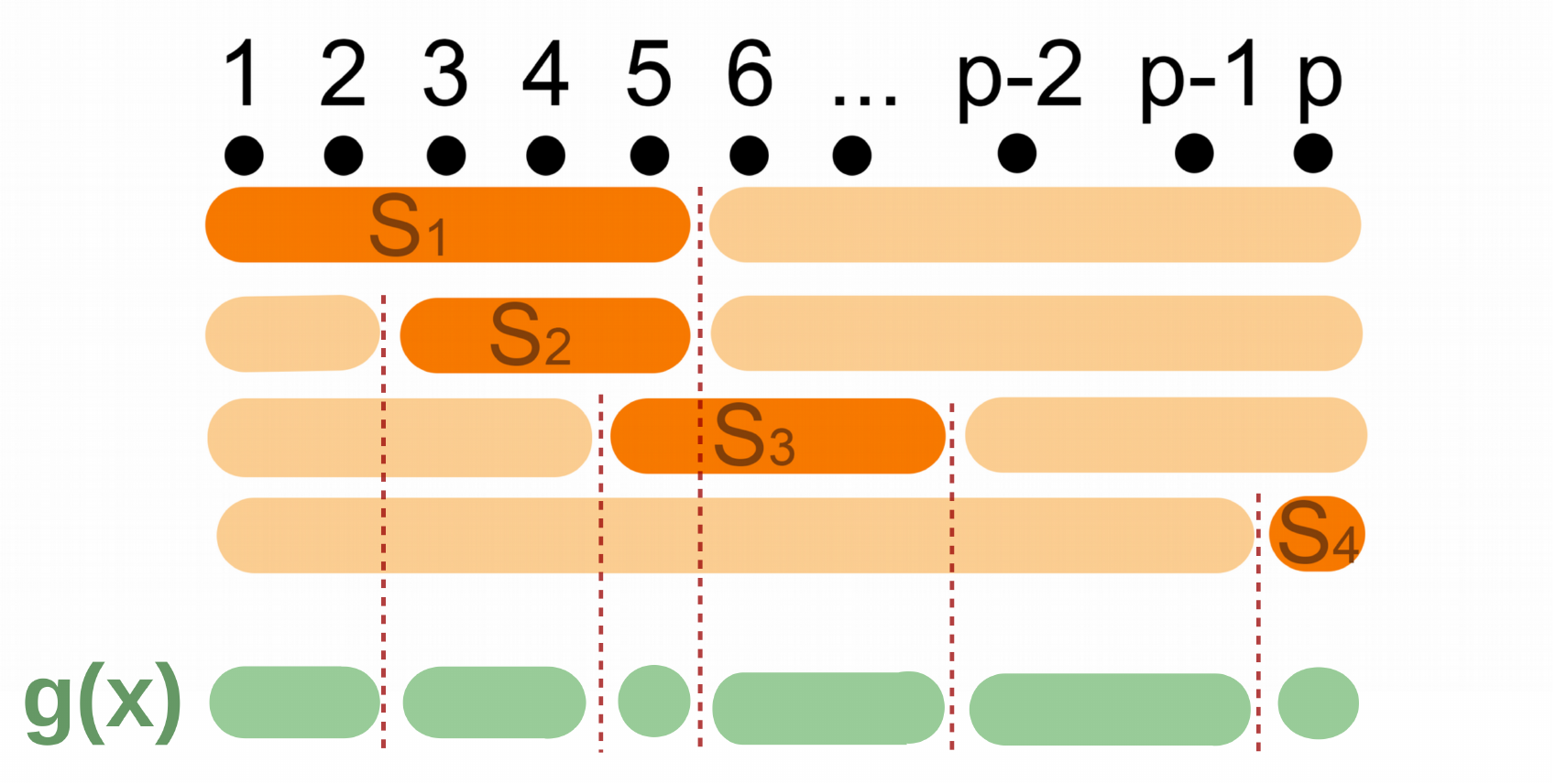}}
\caption{Intuition about the gauge function. Left: lower bounding nature, right: additive nature.}
\label{3fig1}
\end{figure}
From this definition of the $g$ we can see that the following Lemma holds.

\begin{lemma}\label{glemma}
  For the gauge function $g$ the following properties hold
 \begin{enumerate}
  \item $g$ defines a norm on $\mathbb{R}^{p}$.
  \item $g(\beta)\leq \Upsilon_{S}(\beta) \leq \Omega(\beta)$ and $g(\beta)\leq\Upsilon_{S}(\beta_{f(J)}), \text{ }\forall S$ allowed sets.
  \item $g(\beta)= \left(\max\limits_{S \text{ allowed}}\max\left(\Upsilon^{*}_{S}(\beta),\Upsilon^{*}_{S}(\beta_{f(J)})\right)\right)^{*}$.\\
%   And if the following symmetry holds $\Omega(\beta)=\Omega(\beta_{f(J)})$ for all $\beta\in\mathbb{R}^{p}$ then 
%   $$g(\beta)= \left(\max\limits_{S \text{ allowed}}\Upsilon^{*}_{S}(\beta)\right)^{*}.$$ 
  Furthermore $\Upsilon^{*}_{S}(z)=\max\left(\Omega^{*}(\beta_{S}),\Omega^{S^{c,*}}(\beta_{S^{c}})\right)$ $\forall\beta\in\mathbb{R}^{p}$.
  %\item $g(\beta_{r(S)})\leq \Upsilon_{S}(\beta)$.
  \item $g(\beta_{J^{c}})\leq g(\beta)$ for all $\beta\in \mathbb{R}^{p}$.
 \end{enumerate}
\end{lemma}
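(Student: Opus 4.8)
The plan is to deduce all four properties from the single geometric object $B_g$ together with the standard duality between gauge functions and support functions. Throughout, write $F$ for the flip operator, so that $F\beta=\beta_{f(J)}$ and $\operatorname{flip}_J(B)=FB$; since $F$ is the diagonal sign matrix that is $+1$ on $J$ and $-1$ on $J^{c}$, it is symmetric and satisfies $F^{2}=I$, whence $\slt{F\beta}{z}=\slt{\beta}{Fz}$. For property (1) I would invoke the fact that the gauge of a set is a norm exactly when that set is convex, symmetric about the origin, bounded, and has $0$ in its interior. Convexity of $B_g$ is automatic as it is a convex hull. Each $B_{\Upsilon_{S}}$ is the unit ball of the norm $\Upsilon_{S}$, hence symmetric with $0$ in its interior, and these two features are preserved under taking unions, applying the linear involution $F$, and forming the convex hull, so $B_g$ is symmetric and contains a neighbourhood of $0$. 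The one point needing a remark is boundedness: there are only finitely many subsets of $\{1,\dots,p\}$, hence finitely many allowed sets, so $\overline{B}$ is a finite union of bounded balls and $B_g=\operatorname{Conv}(\overline{B}\cup F\overline{B})$ is bounded; thus $g$ is a genuine norm.

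For property (2) I would use that gauges reverse inclusions: if $C_{1}\subseteq C_{2}$ then the gauge of $C_{2}$ is pointwise dominated by that of $C_{1}$. Since $B_{\Upsilon_{S}}\subseteq\overline{B}\subseteq B_g$ for every allowed $S$, this gives $g(\beta)\le\Upsilon_{S}(\beta)$ at once, and $\Upsilon_{S}(\beta)\le\Omega(\beta)$ is precisely weak decomposability. For the flipped bound, $FB_{\Upsilon_{S}}\subseteq B_g$, and because $\beta\in\lambda FB_{\Upsilon_{S}}\iff F\beta\in\lambda B_{\Upsilon_{S}}$, the gauge of $FB_{\Upsilon_{S}}$ at $\beta$ equals $\Upsilon_{S}(F\beta)=\Upsilon_{S}(\beta_{f(J)})$; the same inclusion-reversing argument then yields $g(\beta)\le\Upsilon_{S}(\beta_{f(J)})$.

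Property (3) is where the real work lies, and I expect it to be the main obstacle. I would pass to the dual norm $g^{*}$, which is the support function of $B_g$. Using that the support function of a convex hull of a union is the maximum of the individual support functions, together with the identity $\sup_{x\in F B_{\Upsilon_S}}\slt{x}{z}=\sup_{y\in B_{\Upsilon_S}}\slt{y}{Fz}$ coming from the self-adjointness of $F$, I obtain $g^{*}(z)=\max_{S}\max\!\big(\Upsilon_{S}^{*}(z),\Upsilon_{S}^{*}(z_{f(J)})\big)$; dualizing once more gives the displayed formula. The subsidiary identity $\Upsilon_{S}^{*}(z)=\max(\Omega^{*}(z_{S}),\Omega^{S^{c},*}(z_{S^{c}}))$ is the dual of a sum of two norms supported on the complementary coordinate blocks $S$ and $S^{c}$: writing the constraint as $\Omega(\beta_{S})+\Omega^{S^{c}}(\beta_{S^{c}})\le 1$ and optimizing the split of the unit budget between the two blocks converts the sum of norms into the maximum of the two dual norms. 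Care is needed only to confirm that the supremum over $\overline{B}$ genuinely commutes with the maximum over the finite family $\mathfrak{S}$ and that $F$ is carried through the support functions correctly.

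Finally, for property (4) I would first record that $B_g$ is $F$-invariant by construction, since $\overline{B}$ and $F\overline{B}$ enter symmetrically and $F$ commutes with the convex hull; hence $g(\beta_{f(J)})=g(F\beta)=g(\beta)$. Combining this flip-invariance with the elementary identity $\beta_{J^{c}}=\tfrac12(\beta-\beta_{f(J)})$ and the triangle inequality for the norm $g$ gives $g(\beta_{J^{c}})\le\tfrac12 g(\beta)+\tfrac12 g(\beta_{f(J)})=g(\beta)$, which is exactly the assertion.
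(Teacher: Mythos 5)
Your proposal is correct and follows essentially the same route as the paper's own proof: inclusion of unit balls for (1)--(2), computing $g^{*}$ as the support function of $B_g$ (equivalently, maximizing over convex combinations of points of $\overline{B}\cup\operatorname{flip}_J(\overline{B})$) and then dualizing for (3), the block-splitting argument for $\Upsilon_S^{*}$, and the identity $\beta_{J^{c}}=\tfrac12(\beta-\beta_{f(J)})$ with flip-invariance and the triangle inequality for (4). Your treatment of (1) is in fact slightly more careful than the paper's, which simply cites a reference, since you explicitly verify symmetry, boundedness, and that $0$ is interior.
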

Lemma \ref{glemma} covers the main properties of the gauge function. Result (1) shows that $\Psi$ is in fact a matrix norm. Result (3) gives a characterization of what the gauge function is in our case.
Results (2) and (4) are the main properties of the function $g$, which will be needed to let the error term for the de-sparsified estimator go to 0.
Why we chose to construct the $\Psi$-norm for the $|J|$-wise regression as a column sum of this gauge function $g$ will become more evident later in this paper.
Regarding the construction of confidence sets by means of estimating the precision matrix through the $|J|$-wise multivariate regression, we will need to specify the 
Karush-Kuhn-Tucker (KKT) conditions for
the multivariate regression estimator $\hat{B}_J$.
The first thing we will need is the subdifferential of a matrix norm.
In the paper of \citet{watson} one can find the formulation of the subdifferential for a norm of a $m\times n$ matrix $A$
$$\partial ||A||=\left\{G\in \mathbb{R}^{m\times n}: ||B||\geq ||A||+ \tra{(B-A)^{T}G}, \text{ } \forall \text{ }B\in \mathbb{R}^{m\times n}\right\}.$$
Furthermore we have the following characterization of the subdifferential
\begin{equation}\label{matrixsub}G\in \partial ||A|| \Longleftrightarrow \begin{cases} i) \text{ }\text{ }||A||\text{ }= \tra{G^{T}A}\\ ii)\text{ } ||G||^{*}\leq 1 \end{cases}.\end{equation}
Here the dual matrix norm is defined as $||A||^{*}=\sup\limits_{B: \text{ }||B||\leq 1}\tra{B^{T}A}$. 
Let us briefly note that, by definition of the dual matrix norm, a generalized version of the Cauchy Schwartz Inequality holds true for matrices
$$\tra{AB^{T}}\leq ||A||\cdot ||B||^{*}.$$
% From the definition we can see that
% $$O\in \partial ||A|| \Longleftrightarrow ||B||\geq ||A|| \text{ all } B\in \mathbb{R}^{m\times n}.$$
% This means that if the zero matrix $O$ is in the subdifferential of the matrix norm at matrix $A$ then the function has a global minimum at $A$.
Therefore the dual of the $\Psi$-matrix norm is defined as
$$\Psi^{*}(A):=\sup\limits_{B: \text{ }\Psi(B)\leq 1}\tra{B^{T}A}.$$
Applying \vspace*{0.1cm}equation \eqref{matrixsub} to the optimal solution of equation \eqref{Jwise} \vspace*{0.1cm}leads to the KKT conditions (in the case of $\nuc{X_{J}-X_{J^{c}}\hat{B}_{J}}\neq 0$):
\begin{align}\hat{B}_{J} \text{ is optimal }&\Longleftrightarrow \frac{1}{\lambda_{J}}X_{J^{c}}^{T}(X_{J}-X_{J^{c}}\hat{B}_{J})\hat{\Sigma}_{J}^{-1/2}/n\in \partial\Psi(\hat{B}_{J}) \nonumber    \\
&\hspace*{-2cm}\Longleftrightarrow \begin{cases}i)\text{ }\text{ }\lambda_{J}\Psi(\hat{B}_{J})= \tra{\left\{X_{J^{c}}^{T}(X_{J}-X_{J^{c}}\hat{B}_{J})\hat{\Sigma}_{J}^{-1/2}/n\right\}^{T}\hat{B}_{J}} \\ 
ii)\text{ }\lambda_{J}\geq \Psi^{*}\left(X_{J^{c}}^{T}(X_{J}-X_{J^{c}}\hat{B}_{J})\hat{\Sigma}_{J}^{-1/2}/n\right) .\end{cases}
\end{align}
Here we denote $\hat{\Sigma}_{J}:= (X_{J}-X_{J^{c}}\hat{B}_{J})^{T}(X_{J}-X_{J^{c}}\hat{B}_{J})/n$ (assumed to be non-singular). Let us 
additionally define the $|J|$ de-sparsified $\Omega$ structured estimator with the help of the following notations. 
$$T_{J}:=(X_{J}-X_{J^{c}}\hat{B}_{J})X_{J}/n.$$
The normalizing matrix can then be written as
$$M:=\sqrt{n}\hat{\Sigma}_{J}^{-1/2}T_{J}$$
This leads to the definition of the $|J|$ de-sparsified $\Omega$ structured estimator. Defining a de-sparsified estimator in this way 
lets us deal with group-wise confidence sets.
\begin{definition}
 The $|J|$ de-sparsified $\Omega$ structured estimator is 
 \begin{equation}\label{Jdesp}\hat{b}_{J}:= \bh_{J}+T_{J}^{-1}(X_{J}-X_{J^{c}}\hat{B}_{J})^{T}(Y-X\bh)/n.\end{equation}
\end{definition}
With these definitions we are now ready to describe the asymptotic behavior of the estimator \eqref{Jdesp} in the following Theorem.
\begin{theorem}\label{th1}
 Assume that the error in the model \eqref{modell} is i.i.d. Gaussian distributed $\epsilon\sim\mathcal{N}_{n}(0,\sigma_{0}^{2}I)$. Then with the definition
 $\hat{b}_{J}$ from \eqref{Jdesp} together with $\hat{B}_{J}$ as an estimator of the precision matrix and its normalized version $M\hat{b}_{J}$ we have
 $$M(\hat{b}_{J}-\beta_{J}^{0})/\sigma_{0}=\mathcal{N}_{|J|}(0,I)+rem,$$
 where the $\ell_{\infty}$ norm of the reminder term $rem$ can be upper bounded by
 \begin{align}\linf{rem}&\leq \sqrt{n}\lambda g(\hat{\beta}_{J^{c}}-\beta^{0}_{J^{c}})/\sigma_{0}\nonumber\\
 &\leq \sqrt{n}\lambda \Upsilon_{S_{\star}}(\hat{\beta}-\beta^{0})/\sigma_{0}.\end{align}
\end{theorem}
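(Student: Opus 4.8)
The plan is to split $M(\hat{b}_{J}-\beta^{0}_{J})/\sigma_{0}$ into an exactly Gaussian term plus an explicit remainder by pure algebra, and then control the remainder using KKT condition (ii) of the $|J|$-wise regression. Throughout I would write $Z_{J}:=X_{J}-X_{J^{c}}\hat{B}_{J}$, so that $\hat{\Sigma}_{J}=Z_{J}^{T}Z_{J}/n$, $T_{J}=Z_{J}^{T}X_{J}/n$ and $M=\sqrt{n}\hat{\Sigma}_{J}^{-1/2}T_{J}$. Substituting $Y-X\bh=\epsilon-X(\bh-\bo)$ into \eqref{Jdesp} and splitting $X(\bh-\bo)=X_{J}(\bh-\bo)_{J}+X_{J^{c}}(\bh-\bo)_{J^{c}}$, the contribution $T_{J}^{-1}Z_{J}^{T}X_{J}(\bh-\bo)_{J}/n=(\bh-\bo)_{J}$ cancels the leading $\bh_{J}-\bo_{J}$, leaving
\[
\hat{b}_{J}-\beta^{0}_{J}=T_{J}^{-1}Z_{J}^{T}\epsilon/n-T_{J}^{-1}Z_{J}^{T}X_{J^{c}}(\bh-\bo)_{J^{c}}/n.
\]
Multiplying on the left by $M$ and dividing by $\sigma_{0}$ then yields
\[
M(\hat{b}_{J}-\beta^{0}_{J})/\sigma_{0}=\hat{\Sigma}_{J}^{-1/2}Z_{J}^{T}\epsilon/(\sqrt{n}\,\sigma_{0})+rem,
\]
where $rem:=-\hat{\Sigma}_{J}^{-1/2}Z_{J}^{T}X_{J^{c}}(\bh-\bo)_{J^{c}}/(\sqrt{n}\,\sigma_{0})$.

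Next I would show the first term is exactly $\mathcal{N}_{|J|}(0,I)$. Conditionally on $X$ (hence on $Z_{J}$ and $\hat{\Sigma}_{J}$) it is a fixed linear image of $\epsilon\sim\mathcal{N}_{n}(0,\sigma_{0}^{2}I)$, so it is centered Gaussian with covariance $(\sigma_{0}^{2}/n)\hat{\Sigma}_{J}^{-1/2}Z_{J}^{T}Z_{J}\hat{\Sigma}_{J}^{-1/2}=\sigma_{0}^{2}\hat{\Sigma}_{J}^{-1/2}\hat{\Sigma}_{J}\hat{\Sigma}_{J}^{-1/2}=\sigma_{0}^{2}I$, using $Z_{J}^{T}Z_{J}/n=\hat{\Sigma}_{J}$; dividing by $\sigma_{0}$ makes it standard normal. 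This step is exact, so all approximation is confined to $rem$.

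The core estimate is the bound on $\linf{rem}$. Writing $W:=\hat{\Sigma}_{J}^{-1/2}Z_{J}^{T}X_{J^{c}}/n$, the $k$-th entry of $rem$ equals $-(\sqrt{n}/\sigma_{0})\slt{W_{k,\cdot}}{(\bh-\bo)_{J^{c}}}$, where $W_{k,\cdot}$ is the $k$-th row of $W$ (padded with zeros on $J$). The dual-norm inequality for $g$ gives $\slt{W_{k,\cdot}}{(\bh-\bo)_{J^{c}}}\leq g^{*}(W_{k,\cdot})\,g((\bh-\bo)_{J^{c}})$. Because $\Psi$ is the column-wise sum $\sum_{j}g(\cdot_{j})$, its dual norm is the column-wise maximum, $\Psi^{*}(A)=\max_{j}g^{*}(A_{j})$; since $W^{T}=X_{J^{c}}^{T}Z_{J}\hat{\Sigma}_{J}^{-1/2}/n$ is exactly the matrix of KKT (ii), the bound $\Psi^{*}(W^{T})\leq\lambda_{J}$ says $g^{*}(W_{k,\cdot})\leq\lambda_{J}$ for every $k$. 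Taking the maximum over $k$ gives $\linf{rem}\leq(\sqrt{n}\lambda_{J}/\sigma_{0})\,g((\bh-\bo)_{J^{c}})$, which is the first asserted inequality (the statement's $\lambda$ being this $|J|$-wise penalty level). The second inequality is then immediate from Lemma \ref{glemma}: part (4) yields $g((\bh-\bo)_{J^{c}})\leq g(\bh-\bo)$ and part (2) yields $g(\bh-\bo)\leq\Upsilon_{S_{\star}}(\bh-\bo)$.

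The main obstacle is the third step, namely matching the KKT certificate to the remainder. One must establish the column-wise-maximum form of $\Psi^{*}$ and, crucially, track that the matrix controlled by KKT (ii) is the transpose of $W$, so that its columns correspond to the rows of $W$ that get paired with $(\bh-\bo)_{J^{c}}$ through the dual-norm inequality. The support bookkeeping (vectors supported on $J^{c}$ while $g$ lives on $\mathbb{R}^{p}$) together with this transpose matching are where index and sign slips are easiest; by contrast the cancellation in the first step and the covariance computation in the second are routine.
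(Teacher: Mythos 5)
Your proposal is correct and follows essentially the same route as the paper: the same algebraic cancellation via $T_J^{-1}Z_J^TX_J(\bh-\bo)_J/n=(\bh-\bo)_J$, the same exact-Gaussianity computation for $\hat{\Sigma}_J^{-1/2}Z_J^T\epsilon/(\sqrt{n}\sigma_0)$, and the same identification of the remainder with the KKT certificate of \eqref{Jwise} followed by the dual-norm bound $g^*(\cdot)\le\Psi^*(\cdot)\le\lambda_J$ and Lemma \ref{glemma} (2) and (4). If anything, your bookkeeping of the transpose and of the $\sqrt{n}$ normalization is more careful than the paper's displayed computation, which writes the remainder as $\lambda Z(\beta^0-\bh)_{J^c}/\sqrt{n}$ with the scaling absorbed loosely.
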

As we can see from Lemma \ref{glemma} (4) we can upper bound part of the reminder term from Theorem \ref{th1} as
$g(\hat{\beta}_{J^{c}}-\beta^{0}_{J^{c}})\leq g(\hat{\beta}-\beta^{0}).$
By the definition of the gauge function $g$, from Lemma \ref{glemma} (2) we get that
$$g(\hat{\beta}_{J^{c}}-\beta^{0}_{J^{c}})\leq \Upsilon_{S}(\hat{\beta}-\beta^{0}),\text{ }\forall\text{ }S\text{ allowed sets of }\Omega.$$
But how can we bound $\Upsilon_{S_{\star}}(\hat{\beta}-\beta^{0})$? From \citet{sara1} and \citet{benji1} we can get sharp oracle results for an estimation error expressed 
in a measure very close to the 
$\Upsilon_{S_{\star}}$-norm, where $S_{\star}$ is the active set of the oracle, but the used measure is not quite the $\Upsilon_{S_{\star}}$-norm. 
A refined version of the theorem in \citet{sara1} leads to sharp oracle result, which we will use to upper bound $\linf{rem}$.
% 
% \begin{theorem}[Refined Sharp Oracle Inequality: Short Version]\label{th11}
%  Assume that  $0\leq \delta< 1$, and also that $\frac{\lambda^{m}}{\lambda}=c$ where $0<c<1$, with the theoretical lambda being of order
% $\lambda^{m}:=\max\left(\Omega^{*}(\epsilon^{T}X_{S}),\Omega_{S^{c}}^{*}(\epsilon^{T}X_{S^{c}})\right)/n=\Upsilon_{S}^{*}\left(\epsilon^{T}X\right)/n.$
%  We invoke weak decomposability for $S$ and $\Omega$. Here the active set $S$ and parameter vector $\beta$ can be chosen independently.
%  Then it holds true that
%   \begin{align}
% \Upsilon_{S_{\star}}(\bh-\beta^{0})&\leq \min_{\beta,S,\delta}\bigg(\Upsilon_{S}(\beta^{0}-\beta) +C_{1}\lambda\GO^{2}(L_{S},S)\bigg. \nonumber \\
% &\left.\quad +\frac{C_{2}}{\lambda}\ltn{X(\beta-\beta^{0})}+C_{3}\Omega(\beta_{S^{c}})\right):= \zeta\label{33sharp}
%   \end{align}
% with constants  $L_{S}:= \frac{\lambda+\lambda^{m}}{\lambda-\lambda^{m}}\frac{1+\delta}{1-\delta}$, 
% $C_{1}=\frac{[(1+\delta)(1+c)]^{2}}{2\delta(1-c)}$, $C_{2}=\frac{1}{2\delta(1-c)}$, $C_{3}=\frac{2}{\delta(1-c)}$ and 
%  \begin{align*}
%  S_{\star}:=\amin_{S}\min_{\beta,\delta}\bigg[&\Upsilon_{S}(\beta^{0}-\beta) +C_{1}\lambda\GO^{2}(L_{S},S)\bigg. \\
%  &\quad+\left. \frac{C_{2}}{\lambda}\ltn{X(\beta-\beta^{0})}+C_{3}\Omega(\beta_{S^{c}})\right].
%  \end{align*}
% 
% \end{theorem}
The Lemma \ref{th11} can be found in the Appendix. In conclusion 
Theorem \ref{th1} together with Lemma \ref{th11} leads to the asymptotic normality of the normalized de-sparsified $\Omega$
estimator on the set $J$. A studentized version leads to an asymptotic pivot. To get the studentized version one could for example use \citet{benji1}
or generalize the more optimal bounds from the paper \citet{sara6}.
The results are summarized in the following corollary.

\begin{corollary}\label{cor1}
 Assume that the error in the model \eqref{modell} is i.i.d. Gaussian distributed $\epsilon\sim\mathcal{N}_{n}(0,\sigma_{0}^{2}I)$. The de-sparsified estimator $\hat{b}_{J}$ is as in  
 \eqref{Jdesp} and the normalized version $M\hat{b}_{J^{c}}$, together with the multivariate estimator $\hat{B}_{J}$ from \eqref{psi1}, as an estimator of the precision matrix.
 Assume that  $0\leq \delta< 1$, and also that $\lambda^{m}< c\lambda_{J}$, with $\lambda^{m}$ as in Lemma \ref{th11}. Furthermore assume $\sqrt{n}\lambda_{J}\zeta\longrightarrow 0$ as $n\to \infty$.
 We invoke weak decomposability for $S$ and $\Omega$. Assume we have a consistent estimator of $\sigma_{0}$. Then we have
 $$\lVert M(\hat{b}_{J}-\beta^{0}_{J})\rVert_{\ell_{2}}^{2}/\hat{\sigma} = \chi_{|J|}^2 ( 1+ o_{\PP }(1)).$$
\end{corollary}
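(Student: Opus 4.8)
The plan is to derive the corollary from the expansion established in Theorem \ref{th1}: after squaring and normalising, the leading term is an exact $\chi^2_{|J|}$ variable and everything else is $o_{\PP}(1)$ relative to it, so that the studentisation becomes a routine Slutsky step. Concretely, I would use Theorem \ref{th1} to write $M(\hat b_J-\beta^0_J)/\sigma_0 = Z + rem$ with $Z\sim\N_{|J|}(0,I)$, so that $\lt{Z}^2\sim\chi^2_{|J|}$ exactly, and expand
$$\lt{M(\hat b_J-\beta^0_J)/\sigma_0}^2 = \lt{Z}^2 + 2\,Z^T rem + \lt{rem}^2.$$
The whole argument then reduces to showing that the two correction terms are negligible compared with $\lt{Z}^2=\chi^2_{|J|}$.

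Second, I would control the remainder. On the KKT event, Theorem \ref{th1} together with Lemma \ref{glemma}(4) gives the deterministic bound $\linf{rem}\le \sqrt{n}\lambda\,\Upsilon_{S_{\star}}(\hat\beta-\beta^0)/\sigma_0$. Inserting the oracle bound for $\Upsilon_{S_{\star}}(\hat\beta-\beta^0)$ supplied by Lemma \ref{th11} (valid under $0\le\delta<1$ and $\lambda^m<c\lambda_J$) and invoking the rate assumption $\sqrt{n}\lambda_J\zeta\to 0$ yields $\linf{rem}=o_{\PP}(1)$. Since $rem\in\R{|J|}$ we have $\lt{rem}\le\sqrt{|J|}\,\linf{rem}$, and by Cauchy--Schwarz $|Z^T rem|\le\lt{Z}\,\lt{rem}$. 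A point to be careful about is that $Z$ and $rem$ are both functions of the data and are not independent, so $rem$ cannot be treated as fixed; this is why I would bound the cross term only through the deterministic $\ell_\infty$ estimate and Cauchy--Schwarz rather than by conditioning.

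Third, I would divide by $\lt{Z}^2$. Using $\chi^2_{|J|}/|J|\to 1$ in probability (whether $|J|$ is fixed or grows),
$$\frac{\lt{rem}^2}{\lt{Z}^2}\le \frac{|J|\,\linf{rem}^2}{\lt{Z}^2}=\linf{rem}^2\,(1+o_{\PP}(1))=o_{\PP}(1),$$
and likewise $|Z^T rem|/\lt{Z}^2\le \lt{rem}/\lt{Z}\le \sqrt{|J|}\,\linf{rem}/\lt{Z}=o_{\PP}(1)$. Hence $\lt{M(\hat b_J-\beta^0_J)/\sigma_0}^2=\chi^2_{|J|}(1+o_{\PP}(1))$. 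Finally, replacing $\sigma_0$ by the consistent estimator $\hat\sigma$ contributes a factor $(\sigma_0/\hat\sigma)^2=1+o_{\PP}(1)$, which Slutsky's lemma absorbs into the $(1+o_{\PP}(1))$ term, giving the claim.

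The main obstacle is the second step: turning the oracle inequality of Lemma \ref{th11} into a clean high-probability bound $\linf{rem}=o_{\PP}(1)$. This requires tracking the interplay between the two penalty levels $\lambda$ and $\lambda_J$, the compatibility/oracle constants governed by $\delta$, and the sparsity rate $\zeta$, and confirming that $\sqrt{n}\lambda\,\Upsilon_{S_{\star}}(\hat\beta-\beta^0)$ vanishes. Once this is in hand, the remaining steps are essentially deterministic manipulations plus standard $\chi^2$ concentration and Slutsky's lemma.
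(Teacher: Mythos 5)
Your argument is correct and follows exactly the route the paper intends (the paper only sketches it in the text preceding the corollary): decompose via Theorem \ref{th1}, kill the remainder with Lemma \ref{th11} and the rate assumption $\sqrt{n}\lambda_{J}\zeta\to 0$, then expand the squared norm and apply Slutsky for the studentisation. One cosmetic slip: for fixed $|J|$ the statement $\chi^{2}_{|J|}/|J|\to 1$ is false, but your conclusion survives since $1/\lVert Z\rVert_{\ell_{2}}^{2}=O_{\PP}(1)$ there, so the cross and quadratic correction terms are still $o_{\PP}(1)$.
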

With Corollary \ref{cor1} asymptotic confidence sets can be constructed. But the size of the set $J$ is not controlled. One can find an approach with the group LASSO and the nuclear norm as a penalty in 
\citet{Mitra}, but they need more assumptions. We only need to assume the usual sparsity assumptions on $\beta^{0}$, we do not assume sparsity on $X$. It just happens, due to the
KKT conditions, that a sparse surrogate of the precision matrix bounds the remainder term.

\section{Second framework: $\Omega$ confidence sets}\label{sec:o}
The first framework made use of the gauge function $g$, which is able to lower bound all the $\Upsilon_{S}$-norms associated with the $\Omega$-norm, therefore the remainder term was
asymptotically negligible. But here we will discuss a more direct approach in order to estimate the precision matrix with the $\Omega$-norm itself. But there might be a price to pay. 
This approach was discussed briefly in \citet{sara6} but without mentioning the full consequences of this approach.
In contrast to the first framework, $J$ needs to be a non trivial allowed set of $\Omega$ (complements of allowed sets would also work). 
It is quite natural to be interested in allowed sets (or complements of it). We define another multivariate optimization procedure to get an approximation of the precision matrix as
\begin{equation}\label{xi1}\hat{C}_{J}:=\amin_{C_{J}\in \mathbb{R}^{|J^{c}|\times |J|}}\left(\nuc{X_{J}-X_{J^{c}}C_{J}}+\lambda_{J}\Xi(C_{J})\right).\end{equation}
Here we again use the nuclear norm for its nice KKT properties together with the following norm
\begin{equation}\label{xi}\Xi(A):=\sum\limits_{j=1}^{|J|} \Omega(A_{j}).\end{equation}
In fact we use the $\Omega$-norm as a measure of the columns of a $|J^{c}|\times|J|$ matrix $A$, where $A_{j}$ denotes again the $j$-th column of the matrix $A$ on the set $J^{c}$, 
and $0$ on the set $J$. One new problem arises in this setting, namely that for all allowed sets $S$
$$\Upsilon_{S}(\beta)\leq \Omega(\beta), \text{ }\forall\text{ }\beta\in\mathbb{R}^{p}.$$
Therefore some work has to be done in order to get good bounds for the estimation error expressed in the $\Omega$-norm. 
And this is why we will need to modify the sparsity assumption in order for the reminder term of a de-sparsified version of $\bh_{J}$ to be asymptotically negligible.
%Lemma \ref{th21} shows a way how to get upper bounds for 
\begin{lemma}\label{th21}
 For any weakly decomposable norm $\Omega$ there exists a constant $C_{S_{\star}}$ which may depend on the support $S_{\star}$ of the true underlying parameter $\beta^{0}$ such that
 $$\Omega(\beta_{S_{\star}^{c}})\leq C_{S_{\star}} \Omega^{S_{\star}^{c}}(\beta_{S_{\star}^{c}}), \forall \beta\in\mathbb{R}^{p}.$$
 Here $S_{\star}$ denotes again the optimal allowed oracle set from Lemma \ref{th11}.
\end{lemma}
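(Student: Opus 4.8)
The plan is to recognize the claimed inequality as nothing more than the equivalence of two genuine norms on a fixed finite-dimensional space, so that the constant $C_{S_{\star}}$ is produced by a compactness argument. Fix the set $S_{\star}$ together with the companion norm $\Omega^{S_{\star}^{c}}$ furnished by the weak decomposability of $\Omega$ for $S_{\star}$. Identify $\mathbb{R}^{|S_{\star}^{c}|}$ with the coordinate subspace $V:=\{\beta\in\mathbb{R}^{p}:\beta_{S_{\star}}=0\}$ through the extension convention used throughout the paper. On this subspace the assignment $\beta_{S_{\star}^{c}}\mapsto\Omega(\beta_{S_{\star}^{c}})$ is itself a bona fide norm: homogeneity and the triangle inequality are inherited from $\Omega$, and positive definiteness holds because $\Omega$ is a norm on $\mathbb{R}^{p}$, so $\Omega(\beta_{S_{\star}^{c}})=0$ forces $\beta_{S_{\star}^{c}}=0$. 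Thus I would view the statement as comparing two norms on $V\cong\mathbb{R}^{|S_{\star}^{c}|}$: the restriction of $\Omega$ and the weak-decomposability norm $\Omega^{S_{\star}^{c}}$.

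Next I would invoke the equivalence of all norms on a finite-dimensional vector space, made explicit via extreme values. Concretely, I set
$$C_{S_{\star}}:=\max\bigl\{\Omega(\beta_{S_{\star}^{c}}):\ \beta\in\mathbb{R}^{p},\ \Omega^{S_{\star}^{c}}(\beta_{S_{\star}^{c}})=1\bigr\}.$$
The relevant constraint set is the unit sphere of $\Omega^{S_{\star}^{c}}$ inside $V$, hence closed and bounded and therefore compact, while $\beta_{S_{\star}^{c}}\mapsto\Omega(\beta_{S_{\star}^{c}})$ is continuous, so this maximum is attained and finite. Homogeneity of both norms then upgrades the definition of $C_{S_{\star}}$ to $\Omega(\beta_{S_{\star}^{c}})\leq C_{S_{\star}}\,\Omega^{S_{\star}^{c}}(\beta_{S_{\star}^{c}})$ for every $\beta\in\mathbb{R}^{p}$, which is exactly the asserted bound. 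It is worth noting that the reverse inequality $\Omega^{S_{\star}^{c}}(\beta_{S_{\star}^{c}})\leq\Omega(\beta_{S_{\star}^{c}})$ comes for free, by applying the weak decomposability definition to the vector supported on $S_{\star}^{c}$, whose $S_{\star}$-part vanishes; so the content of the lemma is really the one nontrivial direction.

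The only place where care is needed — and what I would flag rather than an actual obstacle — is the dependence of the constant. Since the subspace $V$ changes with $S_{\star}$ and $\Omega^{S_{\star}^{c}}$ is only guaranteed to exist (it need not be canonical), $C_{S_{\star}}$ is an existential constant that genuinely depends on the oracle set $S_{\star}$ and may be large. There is no hope of a bound uniform over all allowed sets, which is precisely why this lemma, and the attendant strengthening of the sparsity assumption, is required in the second framework. Beyond making the identification of $V$ with $\mathbb{R}^{|S_{\star}^{c}|}$ explicit and verifying that both objects are norms, I do not expect any genuine difficulty in the argument.
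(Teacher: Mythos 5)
Your proof is correct, and it actually establishes the literal statement of the lemma more completely than the paper does. The paper's own proof of Lemma \ref{th21} never argues for the \emph{existence} of $C_{S_{\star}}$: it simply ``takes the optimal constant $C_{S_{\star}}$'' satisfying $\Omega(\beta_{S_{\star}^{c}})\leq C_{S_{\star}}\Omega^{S_{\star}^{c}}(\beta_{S_{\star}^{c}})$ (implicitly relying on exactly the finite-dimensional norm equivalence you make explicit), and spends its effort on a different task: writing $\Omega(\beta^{0}-\bh)\leq \Upsilon_{S_{\star}}(\beta^{0}-\bh)+\Delta_{S_{\star}^{c}}(\beta^{0}-\bh)$ with the gap functional $\Delta_{S_{\star}^{c}}(\beta):=\Omega(\beta_{S_{\star}^{c}})-\Omega^{S_{\star}^{c}}(\beta_{S_{\star}^{c}})$, and then using the constant together with weak decomposability to conclude $\Omega(\beta^{0}-\bh)\leq C_{S_{\star}}\Upsilon_{S_{\star}}(\beta^{0}-\bh)$ --- the form in which the lemma is actually invoked in the proof of Theorem \ref{3th2}. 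So your compactness argument (restriction of $\Omega$ to the coordinate subspace is a norm, $\Omega^{S_{\star}^{c}}$ is a norm, take the maximum of the former over the unit sphere of the latter) supplies the step the paper leaves tacit, while the paper supplies the downstream consequence you do not derive; if you wanted your write-up to plug into Theorem \ref{3th2} you would still need that one extra line. Your observation that $\Omega^{S_{\star}^{c}}(\beta_{S_{\star}^{c}})\leq\Omega(\beta_{S_{\star}^{c}})$ (hence $C_{S_{\star}}\geq 1$) follows from applying \eqref{weak} to vectors supported on $S_{\star}^{c}$ is also correct. One minor caveat: the claim that there is ``no hope'' of a bound uniform over allowed sets is too strong as stated --- for the $\ell_{1}$ and group LASSO norms the constant is identically $1$, and it is $3/2$ for the Lorentz norm --- but this is a side remark and does not affect the validity of your argument.
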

This means that we need to quantify how far off the $\Upsilon_{S_{\star}}$-norm on $S_{\star}^{c}$ is compared to the $\Omega$ norm, see Figure \ref{csfig}.
\begin{figure}\centering
 \includegraphics[width=0.9\textwidth]{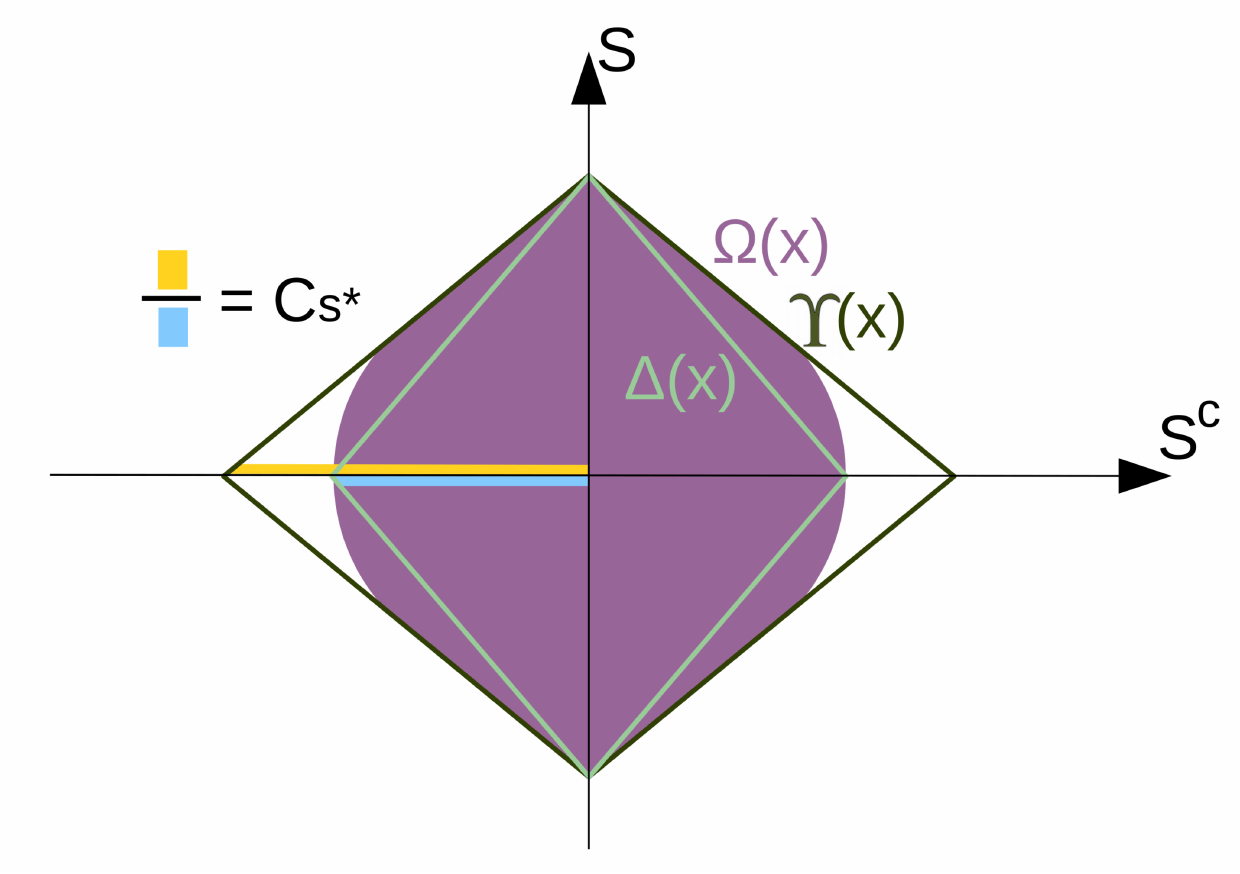}
 \caption{Intuition about the constant $C_{S_{\star}}$.} \label{csfig}
\end{figure}

For the estimation error expressed in the $\Omega$-norm one can already find oracle results in the literature. One can see for example the consistency result Proposition 6 in \citet{obo1}.
But the result from Lemma \ref{th21} together with Lemma \ref{th11} provides more optimal results for our case. This is due to the fact, that the sub optimal constant $1/\rho$ from \citet{obo1} 
appears squared in the bound. Therefore our constant is better suited for our problem. In the Section \ref{sec:ex} we will further discuss this for some widely used examples and show how to choose the
constant $C_{S_{\star}}$ for those examples.

Again, as in Section \ref{sec:g} we need to define a de-sparsified version of the estimator $\bh$. 
This will be a different de-sparsified estimator due to a different estimation of the precision matrix.
In a similar fashion to Section \ref{sec:g} we have the following definitions
\begin{align*}
T_{J}&:=(X_{J}-X_{J^{c}}\hat{C}_{J})^{T}X_{J}/n\\
\hat{\Sigma}_{J}&:= (X_{J}-X_{J^{c}}\hat{C}_{J})^{T}(X_{J}-X_{J^{c}}\hat{C}_{J})/n.
\end{align*}
For the sake of simplicity and readability we keep the same notations as in Section \ref{sec:g} for all these definitions, even though they are defined through $\hat{C}_{J}$ and not $\hat{B}_{J}$. 
\begin{definition}
 The $|J|$ de-sparsified $\Omega$ estimator is again defined as 
 \begin{equation}\label{des2}
 \hat{b}_{J}:= \bh_{J}+T_{J}^{-1}(X_{J}-X_{J^{c}}\hat{C}_{J})^{T}(Y-X\bh)/n.
 \end{equation}
 Here $M:=\sqrt{n}\hat{\Sigma}^{-1/2}T_{J}$ and the normalized version of $\hat{b}_{J}$ is
 $M\hat{b}_{J}.$
\end{definition}

Now with the help of Lemma \ref{th21} we can formulate the following theorem.

\begin{theorem}\label{3th2}
 Assume that the error in the model \eqref{modell} is i.i.d. Gaussian distributed $\epsilon\sim\mathcal{N}_{n}(0,\sigma_{0}^{2}I)$. Then with the definition
 $\hat{b}_{J}$ from \eqref{des2} together with $\hat{B}_{J}$ as an estimator of the precision matrix and its normalized version $M\hat{b}_{J}$ we have
 $$M(\hat{b}_{J}-\beta_{J}^{0})=\mathcal{N}_{|J|}(0,I)+rem.$$
 Where the $\ell_{\infty}$ norm of the reminder term $rem$ can be upper bounded by
 $$\linf{rem}\leq 2\sqrt{n}\lambda C_{S_{\star}}\Upsilon_{S_{\star}}(\hat{\beta}-\beta^{0}).$$
\end{theorem}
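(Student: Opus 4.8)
The plan is to mirror the argument behind Theorem \ref{th1}: isolate from $M(\hat b_J - \bo_J)$ an \emph{exact} Gaussian term, read the remainder off as a bilinear expression in the precision-matrix residuals and $\bh-\bo$, and then control it through the KKT conditions of \eqref{xi1}, with Lemma \ref{th21} supplying the factor $C_{S_\star}$ that distinguishes this framework from the first.

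Setting $W_J := X_J - X_{J^c}\hat C_J$, so that $T_J = W_J^T X_J/n$ and $\hat\Sigma_J = W_J^T W_J/n$, I would first substitute $Y - X\bh = \epsilon - X(\bh-\bo)$ into \eqref{des2} and split $X(\bh-\bo) = X_J(\bh_J - \bo_J) + X_{J^c}(\bh_{J^c}-\bo_{J^c})$. The piece $T_J^{-1}W_J^T X_J(\bh_J-\bo_J)/n$ equals $\bh_J - \bo_J$ and cancels the term already present in \eqref{des2}, leaving the clean identity
$$\hat b_J - \bo_J = T_J^{-1}W_J^T\epsilon/n - T_J^{-1}W_J^T X_{J^c}(\bh_{J^c}-\bo_{J^c})/n.$$
Multiplying by $M = \sqrt{n}\hat\Sigma_J^{-1/2}T_J$ collapses the first term to $\hat\Sigma_J^{-1/2}W_J^T\epsilon/\sqrt{n}$ and exhibits $rem = -\sqrt{n}\hat\Sigma_J^{-1/2}W_J^T X_{J^c}(\bh_{J^c}-\bo_{J^c})/n$.

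Because $\hat C_J$ is built from $X$ only, $W_J$ and $\hat\Sigma_J$ are independent of $\epsilon$, so conditionally on $X$ the leading term is centered Gaussian with covariance $\sigma_0^2\hat\Sigma_J^{-1/2}(W_J^T W_J/n)\hat\Sigma_J^{-1/2} = \sigma_0^2 I$, where the normalization $W_J^T W_J/n = \hat\Sigma_J$ is exactly what produces the identity covariance; since this law is free of $X$, normalizing by $\sigma_0$ as in Theorem \ref{th1} turns it into $\N_{|J|}(0,I)$. For the remainder, write $P := X_{J^c}^T W_J\hat\Sigma_J^{-1/2}/n$ so that $rem = -\sqrt{n}P^T(\bh_{J^c}-\bo_{J^c})$ and its $i$-th coordinate is $-\sqrt{n}\langle P_{\cdot,i}, \bh_{J^c}-\bo_{J^c}\rangle$ (extending the columns by zero on $J$). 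The dual of the column norm $\Xi(A)=\sum_j\Omega(A_j)$ is $\Xi^{*}(B)=\max_j\Omega^{*}(B_j)$, so KKT condition ii) for \eqref{xi1} reads $\max_i\Omega^{*}(P_{\cdot,i})\le\lambda_J$; the generalized Cauchy–Schwarz inequality then gives
$$\linf{rem}\le \sqrt{n}\,\Big(\max_i\Omega^{*}(P_{\cdot,i})\Big)\,\Omega\big((\bh-\bo)_{J^c}\big)\le \sqrt{n}\,\lambda_J\,\Omega\big((\bh-\bo)_{J^c}\big).$$

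It remains to convert $\Omega((\bh-\bo)_{J^c})$ into the oracle quantity $\Upsilon_{S_\star}$, and this is the step that carries the whole difficulty of the second framework. By the triangle inequality $\Omega((\bh-\bo)_{J^c})\le \Omega(\bh-\bo)\le \Omega((\bh-\bo)_{S_\star})+\Omega((\bh-\bo)_{S_\star^c})$; bounding the second summand by Lemma \ref{th21} and using $C_{S_\star}\ge 1$ yields $\Omega((\bh-\bo)_{J^c})\le (1+C_{S_\star})\Upsilon_{S_\star}(\bh-\bo)\le 2C_{S_\star}\Upsilon_{S_\star}(\bh-\bo)$, which is the claimed bound. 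The obstacle here is precisely that, unlike the gauge $g$ of Section \ref{sec:g}, the column penalty now uses $\Omega$ itself and $\Upsilon_{S_\star}\le\Omega$ only in the wrong direction, so oracle inequalities phrased in $\Upsilon_{S_\star}$ do not directly control $\Omega((\bh-\bo)_{J^c})$; Lemma \ref{th21} quantifies this gap and is the unavoidable source of the factor $C_{S_\star}$ (and of the strengthened sparsity assumption noted in the text). I would finally remark that the penalty level surfacing in the bound is the regression penalty $\lambda_J$ of \eqref{xi1}, which is the $\lambda$ written in the statement under the paper's penalty-level convention.
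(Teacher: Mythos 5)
Your derivation of the Gaussian term and of the bound $\linf{rem}\leq \sqrt{n}\lambda_J\,\Omega\big((\bh-\bo)_{J^{c}}\big)$ via the dual norm $\Xi^{*}(B)=\max_j\Omega^{*}(B_j)$ and the KKT conditions of \eqref{xi1} is exactly the paper's argument, and your identification of Lemma \ref{th21} as the source of the factor $C_{S_{\star}}$ is also the paper's route. The one step that does not hold up as written is the inequality $\Omega\big((\bh-\bo)_{J^{c}}\big)\leq \Omega(\bh-\bo)$, which you attribute to the triangle inequality. The triangle inequality runs in the opposite direction, and the inequality itself is false for a general norm: restricting a vector to $J^{c}$ can \emph{increase} $\Omega$ (e.g.\ $\Omega(\beta)=\max(|\beta_1|,|\beta_1+\beta_2|)$ with $\beta=(1,-2)$ and $J=\{1\}$ gives $\Omega(\beta)=1$ but $\Omega(\beta_{J^{c}})=2$). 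The paper instead writes $\Omega(\beta_{J^{c}})=\Omega(\beta-\beta_J)\leq\Omega(\beta)+\Omega(\beta_J)\leq 2\Omega(\beta)$, where the last step uses that $J$ is an allowed set (so $\Omega(\beta_J)\leq\Omega(\beta)$ by weak decomposability) — this is precisely why the second framework requires $J$ to be allowed, and it is where the factor $2$ in the theorem actually originates.

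Once that factor $2$ is spent on the $J^{c}$-restriction, your looser conversion $\Omega(\bh-\bo)\leq(1+C_{S_{\star}})\Upsilon_{S_{\star}}(\bh-\bo)\leq 2C_{S_{\star}}\Upsilon_{S_{\star}}(\bh-\bo)$ would give $4C_{S_{\star}}$ overall, overshooting the stated constant. The fix is to use the sharp form established inside the proof of Lemma \ref{th21}: since $C_{S_{\star}}\geq 1$, one has $\Omega(\beta_{S_{\star}})+C_{S_{\star}}\Omega^{S_{\star}^{c}}(\beta_{S_{\star}^{c}})\leq C_{S_{\star}}\Upsilon_{S_{\star}}(\beta)$, i.e.\ $\Omega(\bh-\bo)\leq C_{S_{\star}}\Upsilon_{S_{\star}}(\bh-\bo)$ with no additive $1$. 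Combining $\Omega\big((\bh-\bo)_{J^{c}}\big)\leq 2\Omega(\bh-\bo)\leq 2C_{S_{\star}}\Upsilon_{S_{\star}}(\bh-\bo)$ then yields the claimed bound. Everything else in your proposal, including the remark on the $\lambda$ versus $\lambda_J$ convention and the division by $\sigma_0$, matches the paper.
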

Again a similar corollary to Corollary \ref{cor1} holds for this construction of confidence regions, but with an additional sparsity assumption. This sparsity assumption needs
to be specified case by case. It depends on the $\Omega$-norm.

\section{Examples of penalties and their behavior in the two frameworks}\label{sec:ex}%\vspace{-2cm}

\begin{table}[p]
   \begin{center}\centerfloat\rotatebox{90}{
    \resizebox{1.55\textwidth}{!}{
     \begin{tabular}{ l  p{5.6 cm} l  p{5.6 cm}}
     \toprule
      &$\ell_{1}$-norm& &Lorentz Norm \\ 
     \cmidrule(lr){2-2}\cmidrule(lr){4-4}
     \raisebox{-\totalheight}{\includegraphics[width=0.17\textwidth]{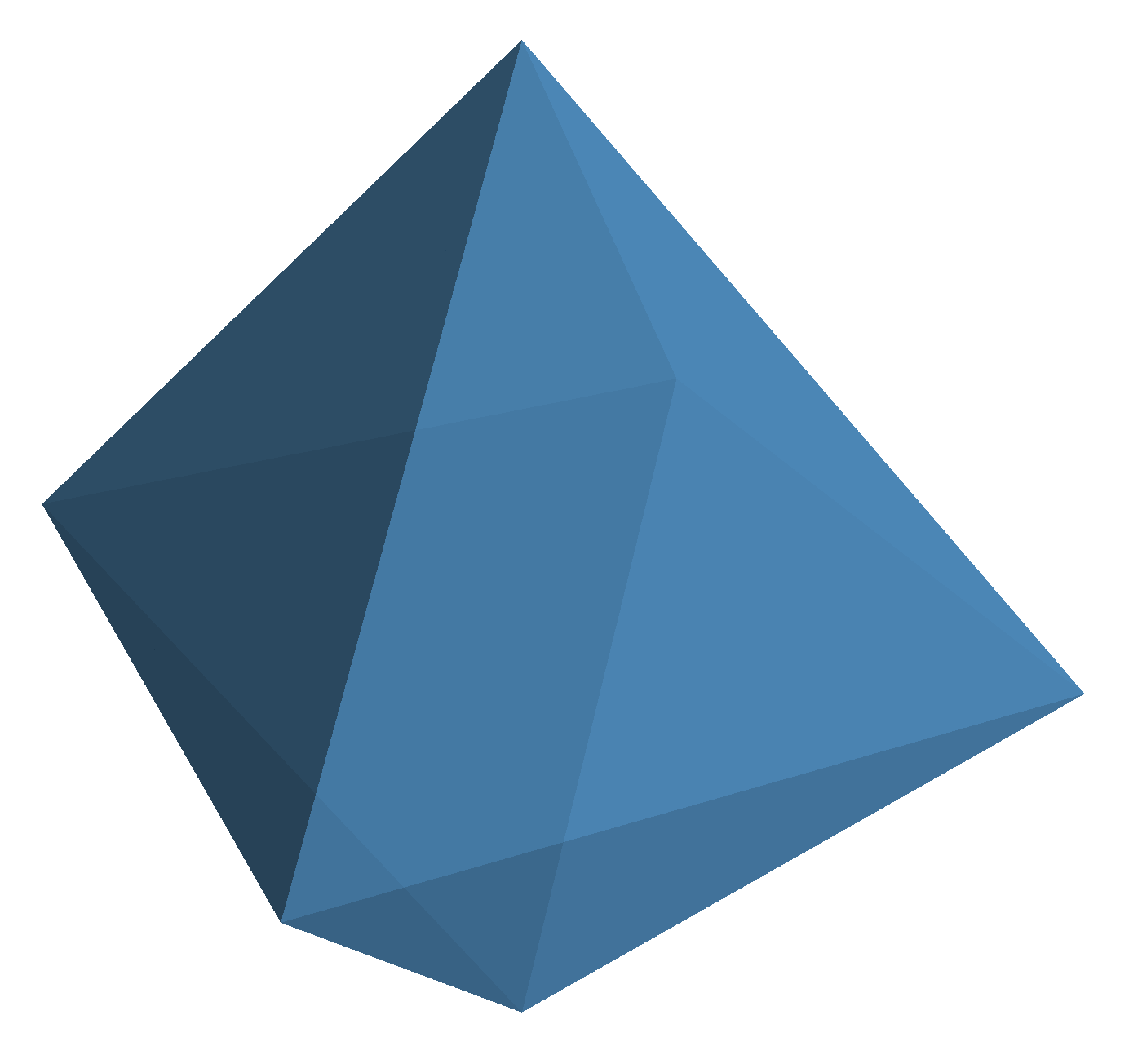}}\hspace{-6mm}
      & \vspace{-1mm}
      \begin{itemize}[leftmargin=*]\footnotesize
      \item[] All subsets $S\subset\{1,...,p\}$ are allowed.
      \item[] $\Omega^{S^{c}}(\beta_{S^{c}})=\lo{\beta_{S^{c}}}$.
      \item[] $g(\beta):=\lo{\beta}$ and $C^{S_{\star}}:=1$.
      \end{itemize}
      &\raisebox{-\totalheight}{\includegraphics[width=0.17\textwidth]{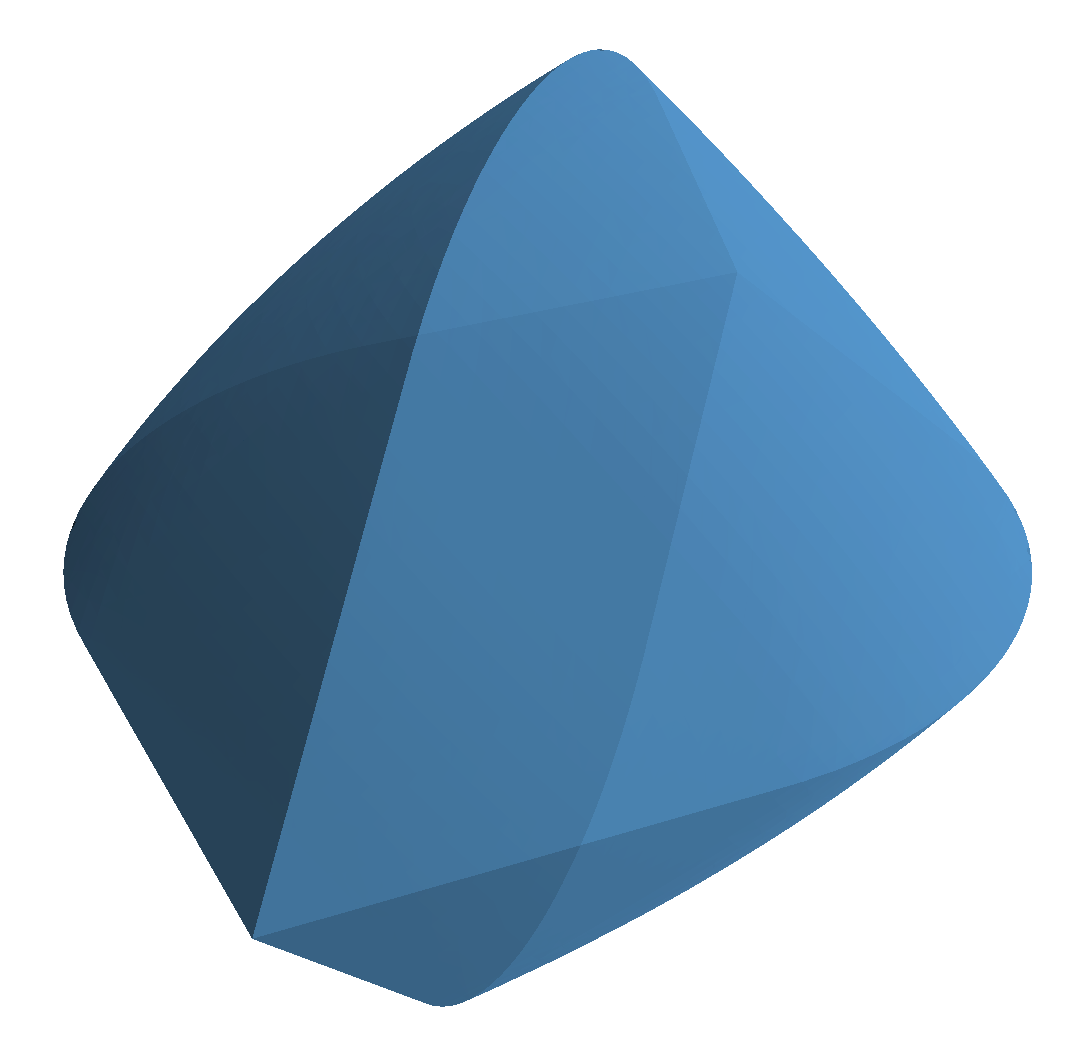}}\hspace{-5mm}
      &\vspace{-1mm}
      \begin{itemize}[leftmargin=*]\footnotesize
      \item[] $S=\left\{p, S_{-p}\right\}$, with $S_{-p}\subset\{1,...,p-1\}$.
      \item[] \scriptsize$\Omega^{S^{c}}(\beta_{S^{c}})=\min\limits_{a_{S^{c}}\in\mathcal{A}_{S^{c}}}\frac{1}{2}\left(\sum\limits_{j\in S^{c}}\frac{\beta_{j}^{2}}{a_{j}}+a_{j}\right).$\footnotesize
      \item[] $g(\cdot)=\lo{\cdot}$ and $C^{S_{\star}}:=3/2$.
      \end{itemize}

      \\
      &Group LASSO norm&& Wedge Norm\\
     \cmidrule(lr){2-2}\cmidrule(lr){4-4}
     \raisebox{-\totalheight}{\includegraphics[width=0.19\textwidth]{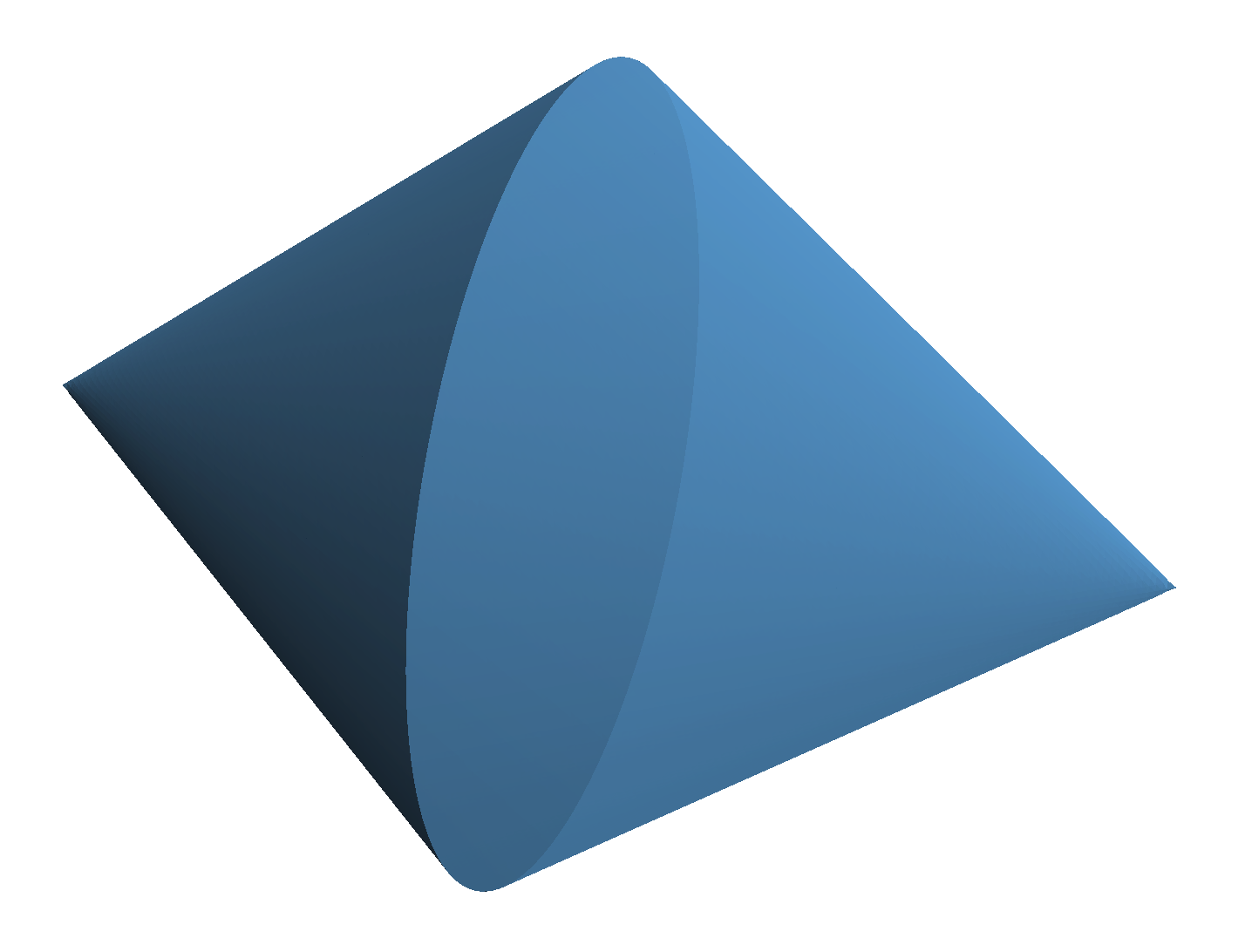}}\hspace{-5mm}
      &\vspace{-1mm}
      \begin{itemize}[leftmargin=*]\footnotesize
      \item[] All subsets consisting of groups $S=\cup_{j\in\mathcal{J}}G_{j},$ $\text{ }\text{ }\mathcal{J}\subset\{1,...,g\}$ are allowed.
      \item[] $\Omega^{S^{c}}(\beta_{S^{c}})=\lVert \beta_{S^{c}}\rVert_{grL}$.
      \item[] $g(\beta):=\lVert \beta \rVert_{grL}$ and $C^{S_{\star}}:=1$.
      \end{itemize}
%       &\raisebox{-\totalheight}{\includegraphics[width=0.18\textwidth]{}}
%       &\vspace{-5mm}
%       \begin{itemize}[leftmargin=*]\footnotesize
%       \item[] $S=\cup_{i\in S_{g}} G_{i}$ where $S_{g}$ is any subset of $\{1,...,g\}$.
%       \item[] $\Omega^{S^{c}}(\beta_{S^{c}})=\Omega^{\mathcal{G}}_{overlap}(\beta_{S^{c}})$.
%       \item[] $g(\beta)=$ and $C^{S_{\star}}=$.
%       \end{itemize}      
%       \\
      &\raisebox{-\totalheight}{\includegraphics[width=0.19\textwidth]{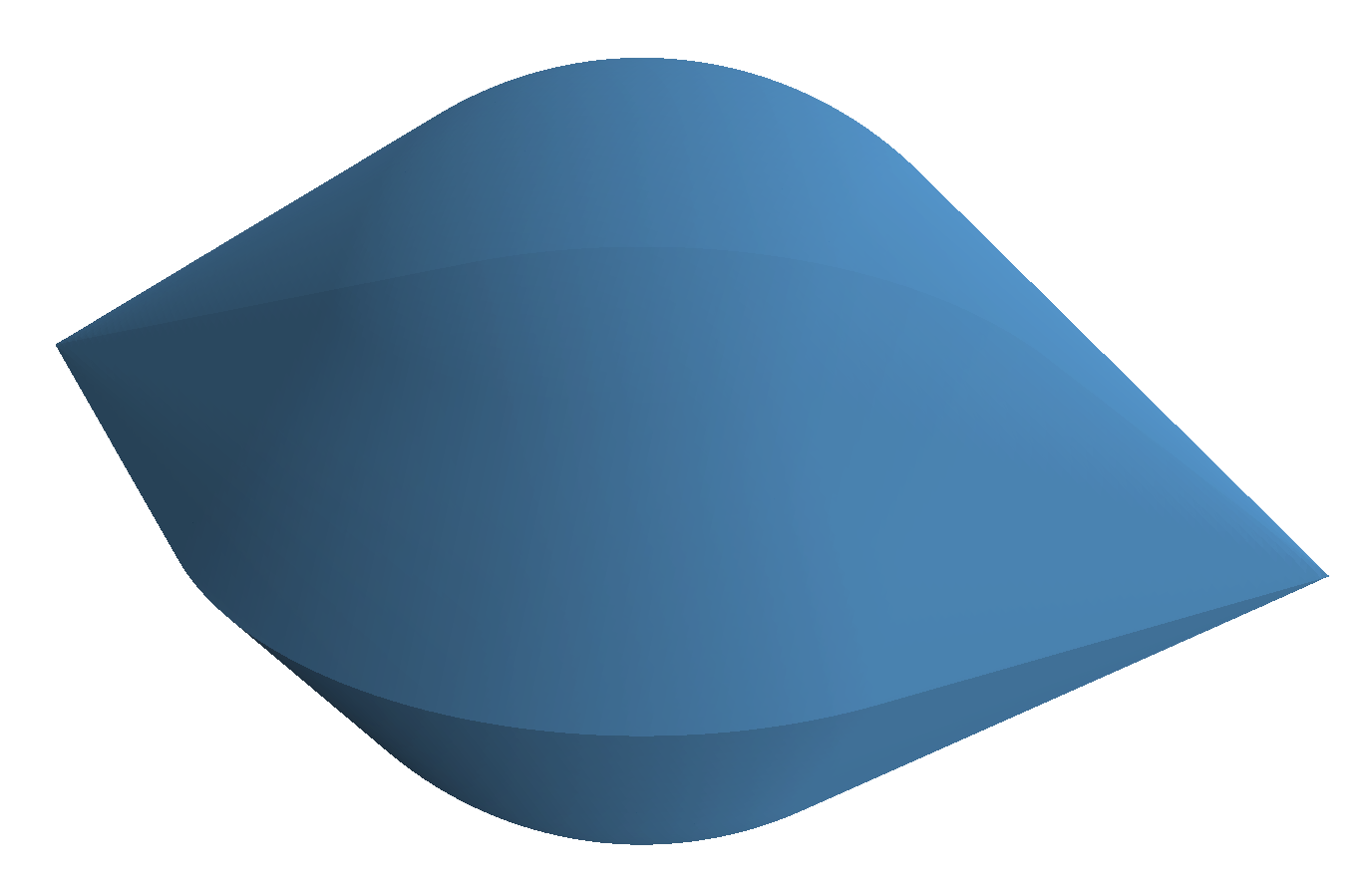}}\hspace{-5mm}
      &\vspace{-1mm}
      \begin{itemize}[leftmargin=*]\footnotesize
      \item[] All sets of the form $S=\{1,...,s\}$, with some $1\leq s\leq p$.
      \item[] $\Omega^{S^{c}}(\beta_{S^{c}})=\Omega(\beta_{S^{c}},\mathcal{A}_{S^{c}})$.
      \item[] $g(\beta):=\lo{\beta}$ and $C^{S_{\star}}:=\sqrt{|S_{\star}|+1}$.
      \end{itemize}

      \\
      &Weighted $\ell_{1}$-norm&&Group Wedge Norm\\
     \cmidrule(lr){2-2}\cmidrule(lr){4-4}
     \raisebox{-\totalheight}{\includegraphics[width=0.17\textwidth]{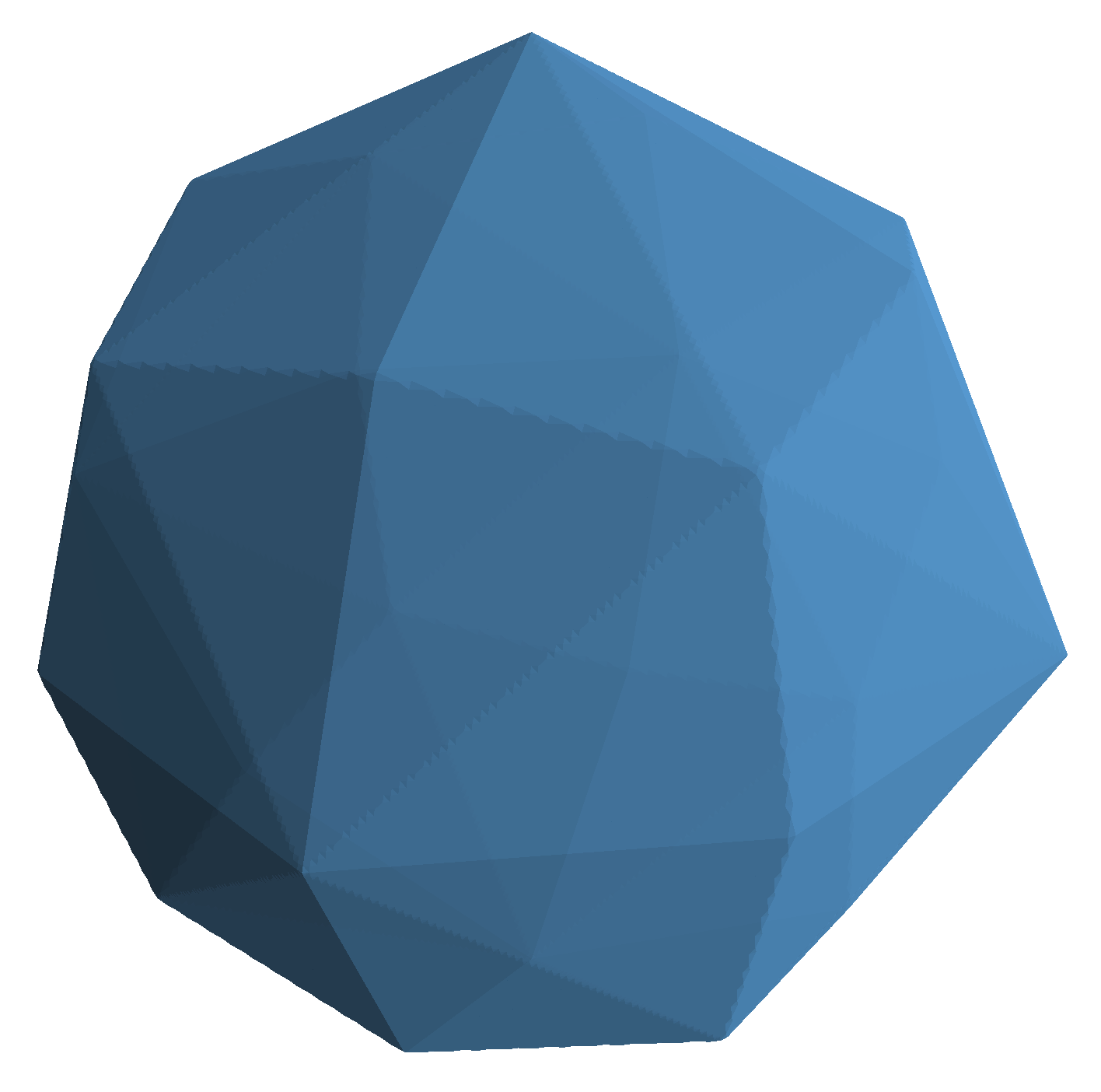}}\hspace{-5mm}
      &\vspace{-1mm}
      \begin{itemize}[leftmargin=*]\footnotesize
      \item[] All subsets consisting of groups $S=\cup_{j\in\mathcal{J}}G_{j}, \text{ }\mathcal{J}\subset\{1,...,g\}$ are allowed.
      \item[] $\Omega^{S^{c}}(\beta_{S^{c}})=\sum_{i=1}^{|S^c|}l_{|S|+i}|\beta|_{(i,S^{c})}$.
      \item[] $g(\beta):=l_{p}\lo{\beta}$ and $C^{S_{\star}}:=\frac{l_{1}}{l_{p}}=o(\log(p))$.
      \end{itemize}
      &\raisebox{-\totalheight}{\includegraphics[width=0.19\textwidth]{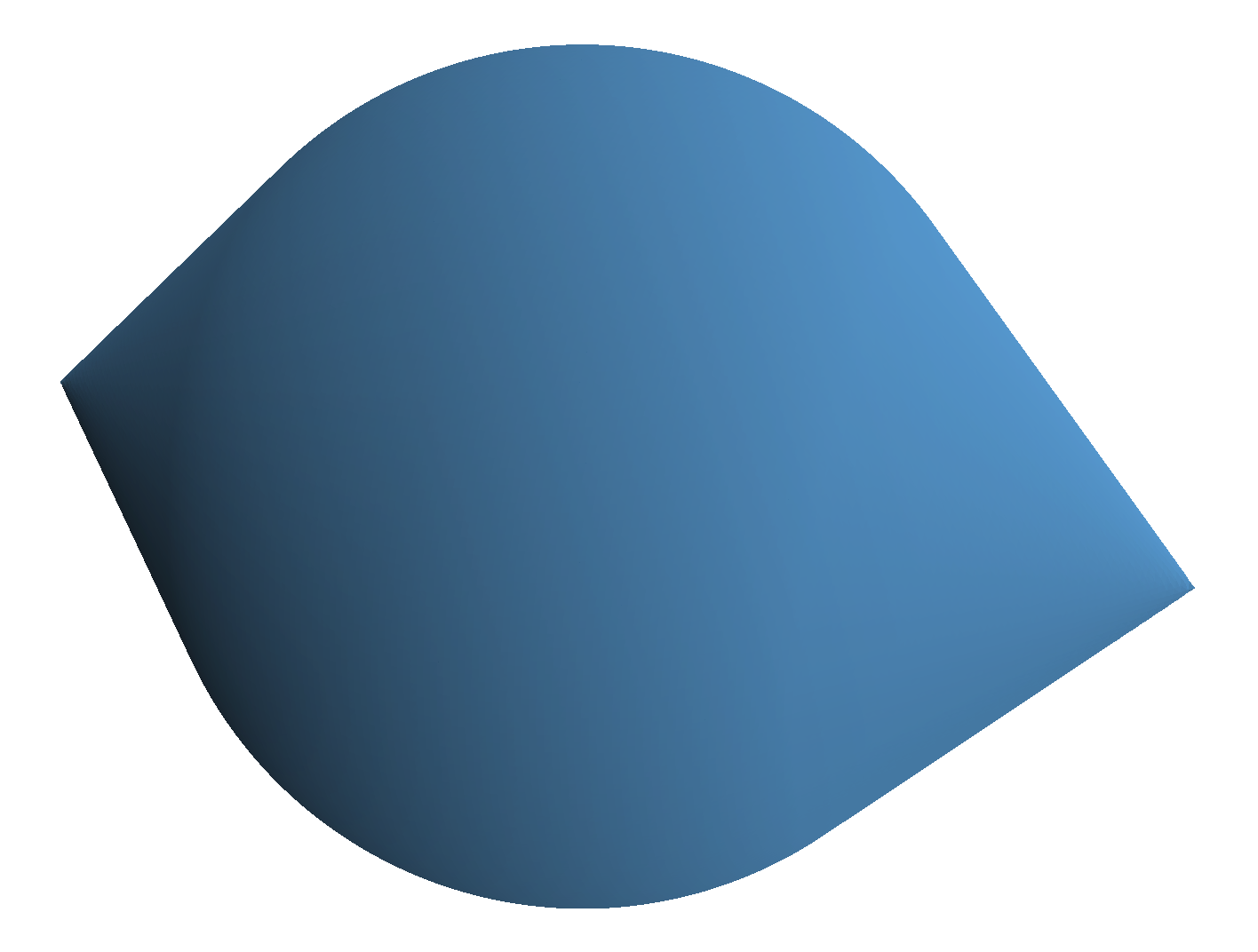}} \hspace{-5mm}
      & \vspace{-1mm}
      \begin{itemize}[leftmargin=*]\footnotesize
      \item[] All sets of the form $S=\{G_{1},...,G_{s}\}$, with some $1\leq s\leq g$.
      \item[] \tiny $\Omega^{S^{c}}(\beta_{S^{c}})=\left\lVert(\lVert\beta_{G_{|S|+1}}\rVert_{2},...,\lVert\beta_{G_{g}}\rVert_{2})^{T}\right\rVert_{W}$.\footnotesize
      \item[] $g(\beta):=\left\lVert\beta^{\mathcal{G}}\right\rVert_{grL}$ and \newline$C^{S_{\star}}:=\sqrt{|S_{\star}|+1}$.
      \end{itemize}

      \\ \bottomrule
      \end{tabular}} }
      \vspace{-3mm}
       \caption{Summary of norm properties.} 
      \label{tbl1}      
  \end{center}
\end{table}
\vspace{1cm}
In this section we try to give the gauge functions $g$ and the constant $C^{*}$ for some of the common norm penalties used in the literature and for some interesting new norm penalties.
Furthermore Table \ref{tbl1} gives an overview of the properties of each example.

\subsection{LASSO: the $\ell_{1}$ Penalty}

As already mentioned the weak decomposable norms all collapse into the $\ell_{1}$-norm due its decomposability. Therefore the gauge function is 
$g(\beta)=\lo{\beta}.$
This means that both of the frameworks for constructing asymptotic confidence sets are in fact the same.
Indeed $\ell_{1}$ has a constant of 
$C_{S_{\star}}=1.$

\subsection{Group LASSO}
The Group LASSO norm is defined by $\lVert \beta \rVert_{grL}:=\sum_{i=1}^{g}\lt{\beta_{G_{i}}}$, where $\{G_{1},...,G_{g}\}$ is a partition of $\{1,...,p\}$.
We know that the active sets for this norm are the groups themselves $S=\cup_{i\in S_{g}} G_{i}$ where $S_{g}$ is any subset of $\{1,...,g\}$.
The gauge function is the group LASSO itself
$g(\beta)=\sum_{i=1}^{g}\lt{\beta_{G_{i}}}.$

Due to the nested $\ell_{1}$-nature of the group LASSO penalty, we have similar decomposable properties as the $\ell_{1}$-norm and get
$C_{S_{\star}}=1.$

\subsection{SLOPE}
The sorted $\ell_1$ norm together with some decreasing sequence $1\geq l_{1}\geq l_{2}\geq...\geq l_{p}> 0$ is defined as
 $$J_{l}(\beta):=l_{1}\lvert\beta\rvert_{(1)}+...+l_{p}\lvert\beta\rvert_{(p)}.$$
 This was shown to be a norm by \citet{zeng}. The SLOPE was introduced by \citet{candes1} in order to control the false discovery rate:
 $$\hat{\beta}_{SLOPE}:=\arg\min_{\beta\in\mathbb{R}^{p}}\left\{\ltn{Y-X\beta}+\lambda J_{l}(\beta)\right\}.$$
 For the SLOPE we have the following two lemmas.
\begin{lemma}\label{lslope}
 For the SLOPE $g(\beta)=l_{p}\lo{\beta}$.
\end{lemma}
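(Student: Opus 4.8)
The plan is to show that the gauge function $g$ associated with the SLOPE norm $J_l$ coincides with $l_p\lo{\cdot}$. Recall from Lemma \ref{glemma} (3) that $g$ is characterized as the dual of $\max_{S\text{ allowed}}\max(\Upsilon_S^*(\cdot),\Upsilon_S^*(\cdot_{f(J)}))$, and from Lemma \ref{glemma} (2) that $g$ lower bounds every $\Upsilon_S$-norm. So establishing $g = l_p\lo{\cdot}$ amounts to two inequalities: that $l_p\lo{\cdot}$ lower bounds all the $\Upsilon_S$-norms (and is therefore dominated by the largest norm $g$ beneath them, or rather is a candidate for $g$), and that $l_p\lo{\cdot}$ is in fact the \emph{largest} such norm, i.e. nothing bigger fits underneath all the $\Upsilon_S$ simultaneously.

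First I would identify the allowed sets and the companion norm $\Omega^{S^c}$ for SLOPE. Since $J_l$ is a sorted/weighted $\ell_1$-norm, its weakly decomposable structure (as recorded in the Weighted $\ell_1$ row of Table \ref{tbl1}) gives $\Omega^{S^c}(\beta_{S^c})=\sum_{i=1}^{|S^c|} l_{|S|+i}|\beta|_{(i,S^c)}$, so that $\Upsilon_S(\beta)=J_l(\beta_S)+\sum_{i=1}^{|S^c|} l_{|S|+i}|\beta|_{(i,S^c)}$. The key observation is that every weight appearing is at least $l_p$, because $l_1\geq\cdots\geq l_p>0$ is decreasing. Hence for each allowed $S$ one has the pointwise bound $\Upsilon_S(\beta)\geq l_p\sum_{j}|\beta_j| = l_p\lo{\beta}$, which gives $l_p\lo{\cdot}\leq \Upsilon_S(\cdot)$ for all allowed $S$.

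Next I would argue optimality, namely that $l_p\lo{\cdot}$ is the largest norm lying below all the $\Upsilon_S$. The clean way is to produce, for each coordinate, an allowed set $S$ on which $\Upsilon_S$ assigns weight exactly $l_p$ to that coordinate, so that no norm dominating $l_p\lo{\cdot}$ can stay beneath every $\Upsilon_S$. Concretely, the full index set (or a set whose complement is a single coordinate placed in the last sorted position) forces the smallest weight $l_p$ onto that coordinate; ranging over the choice of the singleton coordinate shows that the tightest common lower bound on the unit vectors $e_j$ is exactly $l_p$. Since $g$ is a norm (Lemma \ref{glemma} (1)) and the weighted $\ell_1$ structure means $g$ is built from the convex hull construction $B_g=\operatorname{Conv}(\overline{B}\cup\operatorname{flip}_J(\overline{B}))$, evaluating the gauge on the coordinate directions pins down $g(e_j)=l_p$, and by the $\ell_1$-type homogeneity and separability inherited from all the $\Upsilon_S$ being weighted $\ell_1$-norms, this extends to $g(\beta)=l_p\lo{\beta}$ on all of $\mathbb{R}^p$.

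The main obstacle I anticipate is the optimality direction, i.e. verifying that the convex-hull gauge construction does not produce something strictly larger than $l_p\lo{\cdot}$ at non-coordinate points. It is easy to see $g\geq l_p\lo{\cdot}$ is \emph{not} what we want (the lower-bounding property gives the reverse), so care is needed: one must check that the unit ball $B_g$ is exactly the scaled cross-polytope $\{\beta:\lo{\beta}\leq 1/l_p\}$ rather than something smaller. I would handle this by showing every $\Upsilon_S$ unit ball contains the cross-polytope $\{l_p\lo{\beta}\leq 1\}$ (from the weight bound above) and that the $\operatorname{flip}_J$ and convex-hull operations preserve this containment, while the extreme points of the cross-polytope (the scaled $\pm e_j/l_p$) already lie on some $\partial B_{\Upsilon_S}$, so no enlargement occurs. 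This containment-of-balls argument, rather than a direct dual-norm computation through Lemma \ref{glemma} (3), is the cleanest route and is where the decreasing-weights hypothesis $l_1\geq\cdots\geq l_p>0$ does all the work.
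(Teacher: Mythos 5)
Your proposal is correct and follows essentially the same route as the paper: the monotone weights give $l_{p}\lo{\cdot}\leq\Upsilon_{S}(\cdot)$ for every allowed $S$, and the allowed sets with singleton complement $\{k\}$ force $\Upsilon_{\smallsetminus\{k\}}(e_{k})=l_{p}$, hence $g(e_{k})=l_{p}$, after which the triangle inequality (equivalently, your observation that the vertices $\pm e_{j}/l_{p}$ of the cross-polytope already lie in $\overline{B}$) pins down $g=l_{p}\lo{\cdot}$; the paper merely packages this last step as a proof by contradiction. One small slip: the weight bound shows each $B_{\Upsilon_{S}}$ is \emph{contained in} the cross-polytope $\{l_{p}\lo{\beta}\leq 1\}$, not that it contains it, but your subsequent extreme-point argument is the correct mechanism and the conclusion stands.
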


\begin{lemma}\label{lslope2}
 The SLOPE has $C_{S_{\star}}= l_{1}/l_{p}$.
\end{lemma}

\subsection{Wedge}
The wedge norm was introduced in \citet{mi2}, and fits in a more broader structured sparsity concept. This concept is nicely compatible from the viewpoint of
weakly decomposable norms, as discussed at length in \citet{sara1}. Let us define the convex cone $\mathcal{A}:=\{a:a\in\mathbb{R}^{p}_{++},a_{j}\geq a_{j+1}, j\in \mathbb{N}_{n-1}\}$, 
where $\mathbb{R}^{p}_{++}$ denotes the positive orthant. Then
the wedge norm is defined as
 $$\lVert \beta\rVert_{W}=\Omega(\beta;\mathcal{A}):= \min_{a\in \mathcal{A}}\frac{1}{2}\sum\limits_{j=1}^{p}\left(\frac{\beta_{j}^{2}}{a_{j}}+a_{j}\right),$$
 with the notation $0/0=0$.
 Define
 $$\mathcal{A}_{S}:=\{a_{S}: a\in \mathcal{A}\}.$$
 Moreover, \citet{sara1} showed that any $S$ satisfying $\mathcal{A}_{S}\subset\mathcal{A}$ is an allowed set for the wedge norm 
 with $\Omega^{S^{c}}(\beta_{S^{c}}):=\Omega(\beta_{S^{c}},\mathcal{A}_{S^{c}})$. This leads to $S:=\{1,..,s\}$ for any $s\in\{1,...,p-1\}$ being an allowed set.
 Hence the wedge estimator can be defined as
 $$\hat{\beta}_{Wedge}=\amin_{\beta \in \mathbb{R}^{p}} \left\{ \ltn{Y-X\beta }+\lambda \lVert \beta\rVert_{W}\right\}.$$
\begin{lemma}\label{wedge1}
 For the wedge norm the gauge function is the $\ell_{1}$-norm $g(\beta)=\lo{\beta}$, for all $\beta\in\mathbb{R}^{p}$.
\end{lemma}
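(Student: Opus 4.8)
The plan is to prove the equality of the two norms by proving the equality of their unit balls, i.e.\ to show that $B_{g}$ coincides with the $\ell_{1}$ unit ball $\{\beta\in\mathbb{R}^{p}:\lo{\beta}\le 1\}$. Recall that $B_{g}=\operatorname{Conv}(\overline{B}\cup\operatorname{flip}_{J}(\overline{B}))$ with $\overline{B}=\bigcup_{S\text{ allowed}}B_{\Upsilon_{S}}$, that for the wedge norm the allowed sets are $S=\{1,\dots,s\}$ with $\Omega^{S^{c}}(\cdot)=\Omega(\cdot,\mathcal{A}_{S^{c}})$, and that the $\ell_{1}$ ball is the convex hull of $\{\pm e_{i}\}_{i=1}^{p}$. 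Since the gauge reverses inclusions, the inclusion $B_{g}\subseteq\{\lo{\cdot}\le1\}$ is equivalent to $g\ge\lo{\cdot}$ and the reverse inclusion to $g\le\lo{\cdot}$.

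For the bound $g\ge\lo{\cdot}$ I would first observe that for any constraint set and any admissible $a$, the AM--GM inequality gives $\tfrac12(\beta_{j}^{2}/a_{j}+a_{j})\ge|\beta_{j}|$, so that every restricted wedge norm dominates the $\ell_{1}$ norm on its support. Hence $\Upsilon_{S}(\beta)=\Omega(\beta_{S},\mathcal{A}_{S})+\Omega(\beta_{S^{c}},\mathcal{A}_{S^{c}})\ge\lo{\beta_{S}}+\lo{\beta_{S^{c}}}=\lo{\beta}$ for every allowed $S$. Thus each $B_{\Upsilon_{S}}$, and therefore $\overline{B}$, lies inside the $\ell_{1}$ ball. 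Because the $\ell_{1}$ ball is convex and invariant under $\operatorname{flip}_{J}$ for every $J$, it also contains $\operatorname{flip}_{J}(\overline{B})$ and hence the convex hull $B_{g}$, which gives $g\ge\lo{\cdot}$.

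For the reverse bound $g\le\lo{\cdot}$ it suffices to show that each basis vector $e_{i}$ lies in $\overline{B}\subseteq B_{g}$, since then $\operatorname{Conv}\{\pm e_{i}\}=\{\lo{\cdot}\le1\}\subseteq B_{g}$. For $i\ge 2$ I would take the allowed set $S=\{1,\dots,i-1\}$, so that $i$ is the first index of $S^{c}$; then $(e_{i})_{S}=0$ and, minimizing $\tfrac12(1/a_{i}+\sum_{j\in S^{c}}a_{j})$ over the tails $a_{i}\ge\cdots\ge a_{p}>0$ by putting $a_{i}=1$ and letting the remaining coordinates tend to $0$, one obtains $\Upsilon_{S}(e_{i})=\Omega^{S^{c}}((e_{i})_{S^{c}},\mathcal{A}_{S^{c}})=1$, so $e_{i}\in B_{\Upsilon_{S}}$. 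The case $i=1$ follows even more directly from $\Omega(e_{1})=1$, i.e.\ $e_{1}\in B_{\Omega}\subseteq B_{\Upsilon_{S}}\subseteq\overline{B}$ (using $\Upsilon_{S}\le\Omega$). Symmetry of the norms yields $-e_{i}$ as well.

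The main obstacle is the exact evaluation $\Omega^{S^{c}}((e_{i})_{S^{c}},\mathcal{A}_{S^{c}})=1$: one must identify $\mathcal{A}_{S^{c}}$ as the set of tails of decreasing positive sequences, solve the resulting one-dimensional minimization, and—most importantly—recognize that the monotonicity constraint forces the coordinates preceding $i$ to be at least $a_{i}$, which is exactly why $\Omega(e_{i})=\sqrt{i}$ on a block where $i$ is not first. This is what makes the choice of $S$ placing $i$ as the leading index of $S^{c}$ essential. The only remaining point to verify is the (immediate) $\operatorname{flip}_{J}$-invariance of the $\ell_{1}$ ball, which is precisely what renders the flip step in the construction of $B_{g}$ harmless.
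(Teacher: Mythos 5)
Your proof is correct, and it reaches the same two key computations as the paper --- the AM--GM bound $\tfrac12(\beta_j^2/a_j+a_j)\ge|\beta_j|$ giving $\lo{\cdot}\le\Upsilon_S$ for every allowed $S$, and the evaluation $\Upsilon_S(e_i)=1$ when $i$ is placed as the leading index of $S^c$ --- but it packages them differently. The paper argues by contradiction at the level of norms: it cites the lower bound $\lo{\cdot}\le\Upsilon_S$ from the literature, supposes some norm $g$ strictly between $\lo{\cdot}$ and all the $\Upsilon_S$, pins down $g(e_k)=1$ for every $k$ by sandwiching, and then kills the strict inequality with the triangle inequality $g(\gamma)\le\sum_k|v_k|g(e_k)=\lo{\gamma}$. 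You instead work directly with the definition of $g$ as the gauge of $B_g=\operatorname{Conv}(\overline{B}\cup\operatorname{flip}_J(\overline{B}))$ and show the two unit balls coincide: each $B_{\Upsilon_S}$ sits inside the (convex, flip-invariant) $\ell_1$ ball, and conversely each $\pm e_i$ lies in some $B_{\Upsilon_S}$, so $B_g$ contains the cross-polytope. Your triangle-inequality step is replaced by the identity $\operatorname{Conv}\{\pm e_i\}=\{\lo{\cdot}\le1\}$, which is the same fact in dual clothing. What your version buys is self-containedness: the paper's contradiction argument implicitly needs that the gauge $g$ itself satisfies $\lo{\cdot}\le g$ before the sandwich $l_{p}$-style argument applies, and it never spells that out for the wedge; your first inclusion $B_g\subseteq\{\lo{\cdot}\le1\}$ supplies exactly that missing half explicitly, and you also address (harmlessly) the role of $\operatorname{flip}_J$, which the paper's proof ignores. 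The only point to state with a little more care is that the minimum over $\mathcal{A}_{S^c}$ with trailing coordinates sent to $0$ is an infimum attained only under the paper's convention $0/0=0$ on the boundary of $\mathbb{R}^p_{++}$; the paper glosses over the same point, so this is not a gap relative to its standards.
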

The next lemma shows that the wedge estimator has an influence on the amount of sparsity needed for confidence sets. But as the simulations will suggest this might be improvable.
\begin{lemma}\label{wedge2}
 For the wedge penalty we have $C_{S_{\star}}=\sqrt{|S_{\star}|+1}$.
\end{lemma}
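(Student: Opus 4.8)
The plan is to compare directly the two weighted minimization problems that define $\Omega(\beta_{S_{\star}^{c}})$ (the full wedge norm of the vector $\beta_{S_{\star}^{c}}$, which vanishes on $S_{\star}$) and $\Omega^{S_{\star}^{c}}(\beta_{S_{\star}^{c}})=\Omega(\beta_{S_{\star}^{c}};\mathcal{A}_{S_{\star}^{c}})$. Recall that every allowed oracle set of the wedge norm has the form $S_{\star}=\{1,\dots,s\}$ with $s=|S_{\star}|$, so that $S_{\star}^{c}=\{s+1,\dots,p\}$ and $\mathcal{A}_{S_{\star}^{c}}$ is exactly the cone of positive non-increasing sequences on $\{s+1,\dots,p\}$. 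The first observation is that, since $a\in\mathcal{A}$ is non-increasing and $\beta_{j}=0$ for $j\le s$, the objective is strictly increasing in each of $a_{1},\dots,a_{s}$, so the optimal choice of the first $s$ weights is the smallest admissible value $a_{1}=\dots=a_{s}=a_{s+1}$. Hence
\begin{equation*}
\Omega(\beta_{S_{\star}^{c}})=\min_{a\in\mathcal{A}_{S_{\star}^{c}}}\frac{1}{2}\Bigl[s\,a_{s+1}+\sum_{j>s}\Bigl(\frac{\beta_{j}^{2}}{a_{j}}+a_{j}\Bigr)\Bigr],
\end{equation*}
which is the tail problem defining $\Omega^{S_{\star}^{c}}$ with one additional penalty term $\tfrac{s}{2}a_{s+1}$.

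Next I would feed a scaled copy of the tail minimizer into this enlarged problem. Let $\hat{a}\in\mathcal{A}_{S_{\star}^{c}}$ attain $\Omega^{S_{\star}^{c}}(\beta_{S_{\star}^{c}})$, and set $A:=\tfrac12\sum_{j>s}\beta_{j}^{2}/\hat{a}_{j}$, $B:=\tfrac12\sum_{j>s}\hat{a}_{j}$ and $C:=\tfrac{s}{2}\hat{a}_{s+1}$, so that $\Omega^{S_{\star}^{c}}(\beta_{S_{\star}^{c}})=A+B$. For any $t>0$ the sequence $t\hat{a}$ is again admissible, and plugging it into the display above gives
\begin{equation*}
\Omega(\beta_{S_{\star}^{c}})\le \frac{A}{t}+t\,(B+C).
\end{equation*}
Optimizing the right-hand side over $t>0$ (the minimum is at $t=\sqrt{A/(B+C)}$) yields $\Omega(\beta_{S_{\star}^{c}})\le 2\sqrt{A\,(B+C)}$.

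It remains to control $C$ by $B$. Since $\hat{a}$ is non-increasing, $\hat{a}_{s+1}=\max_{j>s}\hat{a}_{j}$, hence $2B=\sum_{j>s}\hat{a}_{j}\ge \hat{a}_{s+1}$ and therefore $C=\tfrac{s}{2}\hat{a}_{s+1}\le sB$. Substituting $B+C\le(1+s)B$ and applying $\sqrt{AB}\le\tfrac12(A+B)$ gives
\begin{equation*}
\Omega(\beta_{S_{\star}^{c}})\le 2\sqrt{(1+s)\,A\,B}\le \sqrt{1+s}\,(A+B)=\sqrt{|S_{\star}|+1}\;\Omega^{S_{\star}^{c}}(\beta_{S_{\star}^{c}}),
\end{equation*}
which is the claim with $C_{S_{\star}}=\sqrt{|S_{\star}|+1}$.

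The main obstacle is obtaining the square-root factor rather than a linear one: the naive extension (taking $t=1$, i.e.\ simply copying the value $\hat{a}_{s+1}$ into the first $s$ coordinates) only yields $\Omega(\beta_{S_{\star}^{c}})\le A+(1+s)B\le(1+s)\Omega^{S_{\star}^{c}}(\beta_{S_{\star}^{c}})$. The improvement comes from the free scaling parameter $t$, which rebalances the penalty $A/t$ against the inflated term $t(B+C)$, so that the factor $1+s$ enters only under a square root after one use of the arithmetic--geometric mean inequality. Minor points to verify are the feasibility of the extended sequence at the junction $j=s$, $j=s+1$ (both entries equal $t\hat{a}_{s+1}$, preserving monotonicity) and the degenerate case $\beta_{S_{\star}^{c}}=0$, both of which are immediate.
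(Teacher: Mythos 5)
Your proof is correct and follows essentially the same route as the paper's: both reduce $\Omega(\beta_{S_{\star}^{c}})$ to the tail problem plus the extra term $\tfrac{s}{2}a_{s+1}$ coming from the optimal constant extension of the weights, bound that extra term by $s$ times the existing penalty sum, and then rescale the weight sequence by a factor of $\sqrt{s+1}$ (which you do by optimizing an explicit scale $t$ and applying AM--GM, while the paper performs the equivalent change of variables $a\mapsto\sqrt{s+1}\,a$ inside the minimum). The only cosmetic difference is that your version makes the origin of the square root more transparent.
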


\subsection{Group Wedge}
This is a new idea for a more general wedge norm. It is based on the concept of grouping variables together.
Assume that we have $g$ disjoint groups $\{G_{1},...,G_{g}\}=\mathcal{G}$ with $\cup_{i=1}^{g}G_{i}=\{1,...,p\}$. Let us denote for a vector $\beta\in\mathbb{R}^{p}$ the $\ell_{2}$-norm on
a given group $G_{j}$ as $\lt{\beta_{G_{j}}}:=\sqrt{|G_{j}|}\sqrt{\sum_{i\in G_{j}}\beta_{i}^{2}}$. Then for a vector $\beta$ we define the following $g$-dimensional vector 
$$\beta^{\mathcal{G}}:=(\lt{\beta_{G_{1}}},\lt{\beta_{G_{2}}},...,\lt{\beta_{G_{g}}})^{T}.$$ 
Now we are able to define the group wedge in terms of the previously defined $g$-dimensional wedge norm on $\mathbb{R}^{g}$ as
$$\lVert\beta \rVert_{GWedge}:= \lVert \beta^{\mathcal{G}} \rVert_{W}.$$
We recover the wedge penalty again if we set the groups to be
$G_{i}:=\{i\}\text{ for any } i \in \{1,...,p\}$.
The first lemma shows that we have a norm again, the proof can be found in Section \ref{chap3:proofs}.
\begin{lemma}\label{grwedge2}
The group Wedge is in fact a norm. 
\end{lemma}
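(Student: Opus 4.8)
The plan is to view the group Wedge as the composition of the wedge norm on $\R{g}$ with the block map $\phi(\beta):=\beta^{\mathcal{G}}=(\lt{\beta_{G_1}},\dots,\lt{\beta_{G_g}})^{T}$, and to verify the three norm axioms by transferring them through $\phi$. Since the wedge norm $\lVert\cdot\rVert_{W}$ is already known to be a norm on $\R{g}$ (see \citet{mi2} and \citet{sara1}), the only genuine issue is that $\phi$ is nonlinear; I would therefore isolate the two structural properties of $\phi$ that make the composition work: absolute homogeneity, and subadditivity into the nonnegative orthant.

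First I would record that each block map $\beta\mapsto\lt{\beta_{G_j}}=\sqrt{|G_j|}\,\sqrt{\sum_{i\in G_j}\beta_i^{2}}$ is a positive multiple of a Euclidean norm, hence itself a norm on $\R{|G_j|}$. This gives immediately $\phi(t\beta)=|t|\,\phi(\beta)$ componentwise, and $\phi(\beta+\gamma)_j\le\phi(\beta)_j+\phi(\gamma)_j$ for every $j$, with all entries of $\phi$ nonnegative. Homogeneity of the group Wedge then follows from homogeneity of $\lVert\cdot\rVert_{W}$, and definiteness follows because $\lVert\phi(\beta)\rVert_{W}=0$ forces $\phi(\beta)=0$ (as $\lVert\cdot\rVert_{W}$ is a norm), i.e. $\beta_{G_j}=0$ for all $j$, i.e. $\beta=0$ since the groups partition $\{1,\dots,p\}$.

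The main step is the triangle inequality. From subadditivity of $\phi$ we have $0\le\phi(\beta+\gamma)\le\phi(\beta)+\phi(\gamma)$ entrywise, so it suffices to prove that the wedge norm is monotone on the nonnegative orthant: if $0\le u_j\le v_j$ for all $j$ then $\lVert u\rVert_{W}\le\lVert v\rVert_{W}$. I would establish this directly from the variational definition $\lVert v\rVert_{W}=\min_{a\in\mathcal{A}}\tfrac12\sum_j(v_j^{2}/a_j+a_j)$: fixing the minimizer $a^{\star}$ for $v$, the value $\tfrac12\sum_j(u_j^{2}/a_j^{\star}+a_j^{\star})$ is no larger than the same expression with $u_j^{2}$ replaced by $v_j^{2}$, which equals $\lVert v\rVert_{W}$; since $\lVert u\rVert_{W}$ is the infimum over $a\in\mathcal{A}$ it is bounded by the value at $a^{\star}$, giving $\lVert u\rVert_{W}\le\lVert v\rVert_{W}$. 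Combining this monotonicity with the triangle inequality for $\lVert\cdot\rVert_{W}$ yields
$$\lVert\beta+\gamma\rVert_{GWedge}=\lVert\phi(\beta+\gamma)\rVert_{W}\le\lVert\phi(\beta)+\phi(\gamma)\rVert_{W}\le\lVert\phi(\beta)\rVert_{W}+\lVert\phi(\gamma)\rVert_{W},$$
which is the desired inequality.

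The only delicate point is the monotonicity of the wedge norm on the nonnegative orthant; everything else is a routine transfer of norm axioms through $\phi$. I expect this monotonicity argument to be the crux, and I would double-check that the restriction $a\in\mathcal{A}$ (positivity and monotone decrease of $a$) causes no trouble — it does not, because the same feasible $a^{\star}$ is reused for both $u$ and $v$, so the ordering constraint never enters the comparison.
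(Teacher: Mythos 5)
Your proof is correct and follows essentially the same route as the paper: verify the three norm axioms by passing everything through the block map $\beta\mapsto\beta^{\mathcal{G}}$ and the wedge norm on $\R{g}$. The only difference is that you explicitly prove the monotonicity of $\lVert\cdot\rVert_{W}$ on the nonnegative orthant via the variational definition — a step the paper's triangle-inequality chain relies on in its first ``$\leq$'' but leaves unjustified — so your write-up is, if anything, the more complete of the two.
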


\begin{lemma}\label{grwedge1}
 The active sets are of the form $S=\cup_{i\in S_{g}}G_{i}$ for some subset of group indices $S_{g}\subset\{1,...,g\}$, and we have 
 $$\Omega^{S^{c}}(\beta_{S^{c}})=\lVert(\lt{\beta_{G_{s+1}}},...,\lt{\beta_{G_{g}}})^{T}\rVert_{W}.$$
 Moreover, the lower bounding gauge norm is the the Group LASSO norm with wedge groups
 $g(\beta)=\sum_{i=1}^{g}\lt{\beta_{G_{g}}}.$
\end{lemma}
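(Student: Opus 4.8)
The statement of Lemma \ref{grwedge1} has two parts: identifying the allowed sets of the group wedge norm and computing its gauge function $g$. My plan is to reduce both claims to the already-established wedge case by exploiting the definition $\lVert\beta\rVert_{GWedge}=\lVert\beta^{\mathcal G}\rVert_W$, where $\beta^{\mathcal G}=(\lt{\beta_{G_1}},\dots,\lt{\beta_{G_g}})^T$ is the $g$-dimensional vector of group $\ell_2$-norms. The key observation is that the group wedge is an outer wedge norm applied to an inner map $\beta\mapsto\beta^{\mathcal G}$ that is itself a group-LASSO-type norm on each group block, so decomposability at the group level should lift the wedge's decomposability (over $\{1,\dots,g\}$) to decomposability over unions of groups.

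\textbf{Step 1: allowed sets.} For the ordinary wedge, the excerpt records that the allowed sets are precisely the prefixes $S=\{1,\dots,s\}$, with $\Omega^{S^c}(\beta_{S^c})=\Omega(\beta_{S^c},\mathcal A_{S^c})$. I would first check that if $S=\cup_{i\in S_g}G_i$ with $S_g=\{1,\dots,s\}$ a prefix of the group indices, then for any $\beta$ the vector $(\beta_S)^{\mathcal G}$ and $(\beta_{S^c})^{\mathcal G}$ are exactly the restrictions of $\beta^{\mathcal G}$ to the index sets $\{1,\dots,s\}$ and $\{s+1,\dots,g\}$; this holds because each $G_i$ lies entirely in $S$ or entirely in $S^c$, so the group $\ell_2$-norms split cleanly. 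Applying the wedge's weak decomposability to the $g$-dimensional vector $\beta^{\mathcal G}$ with the prefix $\{1,\dots,s\}$ then gives
\begin{equation*}
\lVert\beta\rVert_{GWedge}=\lVert\beta^{\mathcal G}\rVert_W\geq \Omega\big((\beta_S)^{\mathcal G};\mathcal A_{\{1,\dots,s\}}\big)+\Omega\big((\beta_{S^c})^{\mathcal G};\mathcal A_{\{s+1,\dots,g\}}\big),
\end{equation*}
which identifies $\Omega^{S^c}(\beta_{S^c})=\lVert(\lt{\beta_{G_{s+1}}},\dots,\lt{\beta_{G_g}})^T\rVert_W$ as claimed. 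I would also argue the converse direction (that only group-prefix sets are allowed) by transporting the wedge's characterization of its allowed sets through the same correspondence.

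\textbf{Step 2: the gauge function.} By Lemma \ref{glemma}(3), $g$ is the dual of $\max_S\max(\Upsilon_S^*(\cdot),\Upsilon_S^*(\cdot_{f(J)}))$, and by Lemma \ref{wedge1} the ordinary wedge has $g(\cdot)=\lo{\cdot}$. Since the group wedge is the wedge applied to the group-norm vector $\beta^{\mathcal G}$, I would expect the gauge to be the $\ell_1$-norm of $\beta^{\mathcal G}$, namely $\sum_{i=1}^g\lt{\beta_{G_i}}$, which is exactly the group-LASSO norm with the wedge groups. The cleanest route is to show directly that $\sum_i\lt{\beta_{G_i}}$ lower-bounds every $\Upsilon_S$ over allowed $S$ (using $\lo{x}\le\lVert x\rVert_W$ coordinatewise via Step 1) and that it equals the gauge of the convex hull $B_g$ from the construction; alternatively one inherits it from the wedge computation by noting that the inner map only rescales coordinates into the $g$-dimensional wedge geometry, where $g$ collapses to $\ell_1$.

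\textbf{Main obstacle.} The delicate point is Step 2, and specifically verifying that passing through the nonlinear inner map $\beta\mapsto\beta^{\mathcal G}$ does not distort the gauge computation — i.e.\ that the convex-hull/flip construction of $B_g$ at the group level really does collapse to the group-LASSO ball rather than something strictly smaller. Because $\beta\mapsto\beta^{\mathcal G}$ is not linear, one cannot simply pull the $g$-dimensional gauge back coordinatewise; I would need to check that the minimum over allowed prefixes of $\Upsilon_S$ still has $\sum_i\lt{\beta_{G_i}}$ as the convexification of its lower envelope, mirroring how $\lo{\cdot}$ arises as $\min_s$ of the wedge's decomposed norms in the ungrouped case. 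Establishing this lower-envelope identity carefully (for which the reflection property $g(\beta_{f(J)})=g(\beta)$ is automatically satisfied by $\sum_i\lt{\beta_{G_i}}$ since the $\ell_2$ group norms are invariant under sign flips) is where the real work lies; the set-identification in Step 1 is essentially bookkeeping once the block structure is noted.
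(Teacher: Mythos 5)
Your proposal follows essentially the same route as the paper: both reduce the group wedge to the ordinary wedge via the composition $\beta\mapsto\beta^{\mathcal{G}}$, read off the allowed sets as unions of groups from the wedge's prefix sets (since the inner $\ell_{2}$-norms have no nontrivial allowed sets), and inherit the gauge from Lemma \ref{wedge1} to obtain the Group LASSO. The obstacle you flag in Step 2 is real but resolvable exactly as in the proof of Lemma \ref{wedge1}, by evaluating on vectors supported on a single group (where $\Upsilon_{S}$ collapses to $\lt{\cdot}$) and then applying the triangle inequality; the paper itself dispatches this point with a one-line appeal to Lemma \ref{wedge1}.
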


\begin{lemma}\label{grwedge3}
 For the group wedge penalty we have $C_{S_{\star}}=\sqrt{|S_{\star}|+1}$, where $S_{\star}$ denotes the oracle set. 
\end{lemma}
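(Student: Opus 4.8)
The plan is to reduce the group wedge to the ordinary wedge norm on $\mathbb{R}^{g}$ via the map $\beta\mapsto\beta^{\mathcal{G}}$ and then quote the wedge constant already established in Lemma \ref{wedge2}. Since $\lVert\beta\rVert_{GWedge}=\lVert\beta^{\mathcal{G}}\rVert_{W}$ by definition, everything about the group wedge should transfer to a statement about the plain $g$-dimensional wedge norm applied to the reduced vector $v:=\beta^{\mathcal{G}}\in\mathbb{R}^{g}$, with the group-size weighting inside $\lt{\beta_{G_{j}}}$ playing no role in the homogeneous inequality of Lemma \ref{th21}.

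First I would set up the reduction precisely. By Lemma \ref{grwedge1} the oracle set is of the form $S_{\star}=\cup_{i\le s}G_{i}$ (the first $s$ groups after reordering), where $s$ is the number of active groups, and the corresponding complement norm is the reduced wedge norm of the tail of $v$. Writing $T:=\{1,\ldots,s\}\subset\{1,\ldots,g\}$ for the induced allowed set of the ordinary wedge norm on $\mathbb{R}^{g}$, I would record the two exact identities
$$\Omega(\beta_{S_{\star}^{c}})=\lVert v_{T^{c}}\rVert_{W},\qquad \Omega^{S_{\star}^{c}}(\beta_{S_{\star}^{c}})=\Omega^{T^{c}}(v_{T^{c}}).$$
The first holds because $\beta_{S_{\star}^{c}}$ vanishes on the first $s$ groups, so its reduced vector $(\beta_{S_{\star}^{c}})^{\mathcal{G}}$ is exactly $v$ with its first $s$ coordinates zeroed, i.e. $v_{T^{c}}$; the second is precisely the form of $\Omega^{S_{\star}^{c}}$ given in Lemma \ref{grwedge1}, read as the reduced wedge norm $\Omega^{T^{c}}$ on $\mathbb{R}^{g}$.

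Next I would apply Lemma \ref{wedge2} to the ordinary wedge norm on $\mathbb{R}^{g}$ with oracle set $T$, obtaining $\lVert v_{T^{c}}\rVert_{W}\le\sqrt{|T|+1}\,\Omega^{T^{c}}(v_{T^{c}})$, and substitute the identities above to conclude $\Omega(\beta_{S_{\star}^{c}})\le\sqrt{s+1}\,\Omega^{S_{\star}^{c}}(\beta_{S_{\star}^{c}})$ with $|T|=s$. Since each group is non-empty, $s\le|S_{\star}|$, so $\sqrt{s+1}\le\sqrt{|S_{\star}|+1}$ and the claimed constant $C_{S_{\star}}=\sqrt{|S_{\star}|+1}$ follows (in fact the sharper value $\sqrt{s+1}$, counting active groups, is available). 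If a self-contained argument for the underlying wedge bound were wanted, I would expand the minimal sequence $a^{*}$ realizing $\Omega^{T^{c}}(v_{T^{c}})$, extend it to the first $s$ coordinates by the constant value $a^{*}_{s+1}$ (allowed by monotonicity), rescale by $t>0$, use the balance $\sum_{j>s}v_{j}^{2}/a^{*}_{j}=\sum_{j>s}a^{*}_{j}$ together with $a^{*}_{s+1}\le\sum_{j>s}a^{*}_{j}$, and optimize at $t=1/\sqrt{s+1}$ to produce the factor $\sqrt{s+1}$. The main obstacle here is not analytic but bookkeeping: verifying that the reduction $\beta\mapsto\beta^{\mathcal{G}}$ is exact at the level of both $\Omega$ and $\Omega^{S_{\star}^{c}}$, and keeping track of the gap between the number of active groups $s$ and the coordinate cardinality $|S_{\star}|$ that appears in the stated constant.
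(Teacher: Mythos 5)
Your proposal is correct and follows essentially the same route as the paper, whose entire proof is the one-line remark that Lemma \ref{wedge2} applies ``in this context'' with $S_{\star}$ read as the optimal active groups; you simply supply the reduction $\beta\mapsto\beta^{\mathcal{G}}$ and the two identities that make that remark rigorous. Your observation that the argument actually yields the sharper constant $\sqrt{s+1}$ in the number $s$ of active groups, which is bounded by $\sqrt{|S_{\star}|+1}$ since groups are non-empty, correctly resolves the only ambiguity in the paper's statement.
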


\subsection{Lorentz norm}
Let us first define the Lorentz Cone (also known as the Ice Cream Cone):
$$\mathcal{A}:=\left\{\begin{pmatrix}a_{1}\\\smash{\vdots}\\a_{p-1}\\a_{p}\end{pmatrix}\in\mathbb{R}^{p}_{++}\middle| a_{p}\geq \lt{\begin{pmatrix}a_{1}\\\smash{\vdots}\\a_{p-1}\end{pmatrix}}\right\}$$

In a similar fashion to the definition of the wedge norm, the Lorentz norm is
$\lVert \beta\rVert_{Lo}:=\frac{1}{2}\min\limits_{a\in\mathcal{A}}\sum\limits_{i=1}^{p}\left(\frac{\beta^{2}_{i}}{a_{i}}+a_{i}\right)$.
This next lemma shows, that the Lorentz norm lets the index $p$ always be part of the preferred active sets.
\begin{lemma}\label{lorentz1}
 For the Lorentz norm it holds true that all the allowed sets contain $p$ and are of the form
 $$S=\left\{p, ...\text{ any combination of other variables}\right\}.$$
\end{lemma}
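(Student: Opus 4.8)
The plan is to prove the two implications separately: that every nontrivial allowed set must contain the index $p$, and that conversely any set of the form $\{p\}\cup T$ with $T\subseteq\{1,\dots,p-1\}$ is allowed. The necessity direction is the substantive part and the one the statement emphasizes, so I describe it first.

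For necessity I would argue by contraposition. Suppose $S$ is nonempty with $p\in S^{c}$, and suppose toward a contradiction that $\Omega$ is weakly decomposable at $S$ with companion norm $\Omega^{S^{c}}$. Since $p\in S^{c}$ we have $S\subseteq\{1,\dots,p-1\}$, so I may fix an index $k\in S$ with $k<p$. The idea is to probe the inequality $\Omega(\beta_{S})+\Omega^{S^{c}}(\beta_{S^{c}})\le\Omega(\beta)$ on the two-sparse vectors $\beta=t\,e_{k}+s\,e_{p}$, where $\beta_{S}=t\,e_{k}$ and $\beta_{S^{c}}=s\,e_{p}$. Here I would first evaluate $\Omega$ on such vectors by solving the defining minimization over the Lorentz cone: sending every coordinate $a_{j}$ with $j\notin\{k,p\}$ to $0$ is feasible and only relaxes the constraint $a_{p}\ge \lt{(a_{1},\dots,a_{p-1})}$, which then reduces to $a_{p}\ge a_{k}$; a one- or two-variable optimization gives $\Omega(t\,e_{k})=\sqrt{2}\,|t|$ and, for $|s|\le|t|$, $\Omega(t\,e_{k}+s\,e_{p})=\sqrt{2(t^{2}+s^{2})}$.

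With these values the weak decomposability inequality becomes $\sqrt{2}\,|t|+c\,|s|\le\sqrt{2(t^{2}+s^{2})}$ for all small $s$, where $c:=\Omega^{S^{c}}(e_{p})>0$ by homogeneity of the norm $\Omega^{S^{c}}$. Fixing $t\neq0$ and letting $s\to0^{+}$, the right-hand side is smooth in $s$ at $s=0$ with vanishing derivative, whereas the left-hand side has slope $c>0$; equivalently $\sqrt{2(t^{2}+s^{2})}-\sqrt{2}\,|t|=O(s^{2})$ cannot dominate $c\,s$. This forces $c\le0$, contradicting $\Omega^{S^{c}}(e_{p})>0$. Hence no such $\Omega^{S^{c}}$ exists and $S$ is not allowed, proving that every nontrivial allowed set contains $p$. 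For the converse I would mirror the decomposition used for the wedge norm: if $p\in S$ then $S^{c}\subseteq\{1,\dots,p-1\}$, and the crucial observation is that decreasing the coordinates indexed by $S^{c}$ can only shrink $\lt{(a_{1},\dots,a_{p-1})}$, so the Lorentz constraint stays satisfied; this is exactly the condition $\mathcal{A}_{S}\subset\mathcal{A}$ that, as for the wedge norm, lets one send the $S^{c}$-part of the cone variable to $0$ and take $\Omega^{S^{c}}(\cdot):=\Omega(\cdot\,;\mathcal{A}_{S^{c}})$. Evaluating both restricted problems at the projections $a^{*}_{S}$ and $a^{*}_{S^{c}}$ of the global minimizer $a^{*}$ of $\Omega(\beta)$ then yields $\Omega(\beta_{S})+\Omega^{S^{c}}(\beta_{S^{c}})\le\Omega(\beta)$ termwise, so $\{p\}\cup T$ is allowed for every $T\subseteq\{1,\dots,p-1\}$.

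The main obstacle is the necessity step, and within it the exact evaluation of $\Omega$ on the two-sparse vectors together with the observation that its square-root (hence smooth, zero-slope) behaviour in $s$ at $s=0$ is incompatible with the kinked term $c\,|s|$ coming from any genuine companion norm. Everything else is the routine cone optimization and the book-keeping already used for the wedge example.
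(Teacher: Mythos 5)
Your proof is correct, but the necessity half takes a genuinely different route from the paper's. The paper reduces the whole lemma to the cone criterion cited from \citet{sara1} --- $S$ is allowed if and only if $\mathcal{A}_{S}\subset\mathcal{A}$ --- and then merely checks in two cases whether zeroing the coordinates outside $S$ preserves the constraint $a_{p}\geq\lt{(a_{1},\dots,a_{p-1})}$: it fails when $p\notin S$ (the left side becomes $0$) and holds when $p\in S$ (zeroing entries can only shrink the right side). You instead prove necessity directly from the definition of weak decomposability: your evaluations $\Omega(t\,e_{k})=\sqrt{2}\,|t|$ and $\Omega(t\,e_{k}+s\,e_{p})=\sqrt{2(t^{2}+s^{2})}$ for $|s|\leq|t|$ are correct (after sending the irrelevant cone coordinates to $0$ the constraint reduces to $a_{p}\geq a_{k}$, and convexity places the constrained minimizer on that boundary), and the comparison of the $O(s^{2})$ growth of $\sqrt{2(t^{2}+s^{2})}-\sqrt{2}\,|t|$ against the kinked term $c\,|s|$ with $c=\Omega^{S^{c}}(e_{p})>0$ does rule out any companion norm. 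This buys something real: it does not use the ``only if'' direction of the cone characterization, which is the less standard half of the cited equivalence and the only thing the paper's Case~1 rests on, so your necessity argument is more self-contained; the price is a small amount of explicit optimization. Your sufficiency half --- $\mathcal{A}_{S}\subset\mathcal{A}$ when $p\in S$, take $\Omega^{S^{c}}(\cdot)=\Omega(\cdot\,;\mathcal{A}_{S^{c}})$, and bound both restricted minimizations by plugging in the restrictions of the global minimizer --- is essentially the paper's Case~2 combined with the standard argument behind the cited criterion, so no difference there.
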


And we get the next lemma.
\begin{lemma}\label{lorentz2}
 For the Lorentz norm $g(\cdot)=\lo{\cdot}$.
\end{lemma}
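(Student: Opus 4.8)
The plan is to show that the unit ball $B_{g}=\operatorname{Conv}\bigl(\overline{B}\cup\operatorname{flip}_{J}(\overline{B})\bigr)$ of the gauge function coincides with the cross-polytope, i.e. the unit ball of $\lo{\cdot}$. Since $g=\lo{\cdot}$ is equivalent to $B_{g}=\{x:\lo{x}\le 1\}$, I would establish the two inclusions separately, corresponding to the inequalities $g\ge\lo{\cdot}$ and $g\le\lo{\cdot}$.

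For the lower bound $g(\beta)\ge\lo{\beta}$, I would first observe that for every $a\in\mathcal{A}\subset\mathbb{R}^{p}_{++}$ the elementary AM--GM inequality gives $\tfrac12(\beta_{j}^{2}/a_{j}+a_{j})\ge|\beta_{j}|$, so summing over $j$ yields $\Omega(\beta)\ge\lo{\beta}$; the same argument applied to the restricted minimisation gives $\Omega^{S^{c}}(\beta_{S^{c}})\ge\lo{\beta_{S^{c}}}$. Because every allowed $S$ satisfies $\Upsilon_{S}(\beta)=\Omega(\beta_{S})+\Omega^{S^{c}}(\beta_{S^{c}})$, this yields $\Upsilon_{S}(\beta)\ge\lo{\beta_{S}}+\lo{\beta_{S^{c}}}=\lo{\beta}$ for all allowed $S$. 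As $\lo{\cdot}$ is invariant under $\operatorname{flip}_{J}$, the same bound holds for $\Upsilon_{S}(\beta_{f(J)})$, so each ball $B_{\Upsilon_{S}}$ and each flipped ball lies inside the $\lo{\cdot}$-ball; taking the convex hull (and using convexity of the $\lo{\cdot}$-ball) gives $B_{g}\subset\{\lo{\cdot}\le1\}$, that is $g\ge\lo{\cdot}$.

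For the upper bound $g\le\lo{\cdot}$, it suffices to place every signed basis vector $\pm e_{i}$ in $B_{g}$, since their convex hull is exactly the cross-polytope. The key computation is that the Lorentz cone restricted to coordinates strictly below $p$ is the full positive orthant, so by the table entry $\Omega^{S^{c}}(\beta_{S^{c}})=\min_{a_{S^{c}}\in\mathcal{A}_{S^{c}}}\tfrac12\sum_{j\in S^{c}}(\beta_{j}^{2}/a_{j}+a_{j})=\lo{\beta_{S^{c}}}$ whenever $p\notin S^{c}$. Concretely, for $i=p$ I would take $S=\{p\}$ (allowed by Lemma \ref{lorentz1}) and compute $\Upsilon_{S}(e_{p})=\Omega(e_{p})=\tfrac12\min_{a\in\mathcal{A}}(1/a_{p}+\sum_{j}a_{j})=1$, the minimiser sending $a_{j}\to0$ for $j<p$ and $a_{p}\to1$; for $i<p$ I would again take $S=\{p\}$ so that $i\in S^{c}$, giving $\Upsilon_{S}(e_{i})=\Omega^{S^{c}}(e_{i})=\lo{e_{i}}=1$. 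In either case $e_{i}\in B_{\Upsilon_{S}}\subset B_{g}$, and by the flip symmetry $-e_{i}\in B_{g}$ as well.

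The main obstacle is the case $i<p$: a direct computation gives $\Omega(e_{i})=\sqrt{2}>1$, so $e_{i}$ fails to lie in the unit ball of $\Omega$ itself, and one must exploit the weak-decomposability structure--specifically the collapse $\Omega^{S^{c}}=\lo{\cdot}$ caused by the off-apex part of the Lorentz cone being unconstrained. Once both inclusions $\{\lo{\cdot}\le1\}\subset B_{g}\subset\{\lo{\cdot}\le1\}$ are established, I conclude $B_{g}=\{\lo{\cdot}\le1\}$ and hence $g(\cdot)=\lo{\cdot}$.
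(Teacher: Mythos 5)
Your proposal is correct and follows essentially the same route as the paper's proof: both establish $\lo{\cdot}\leq\Upsilon_{S}$ for every allowed set (you via AM--GM, the paper by citing the structured-sparsity literature) and then obtain the reverse bound by evaluating $\Upsilon_{S}$ on the standard basis with the special allowed set $S=\{p\}$, where $\Omega^{S^{c}}$ collapses to $\lo{\cdot}$ because the off-apex coordinates of the Lorentz cone are unconstrained. Your convex-hull/cross-polytope phrasing is just the geometric form of the paper's triangle-inequality step from Lemma \ref{wedge1}, and your observation that $\Omega(e_{i})=\sqrt{2}$ for $i<p$ matches the paper's computation of why one must pass through $\Omega^{S^{c}}$ rather than $\Omega$ itself.
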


\begin{lemma}\label{lorentz3}
 For the Lorentz norm $C_{S_{\star}}=3/2$.
\end{lemma}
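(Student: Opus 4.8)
The plan is to turn the inequality $\Omega(\beta_{S_{\star}^{c}})\le C_{S_{\star}}\,\Omega^{S_{\star}^{c}}(\beta_{S_{\star}^{c}})$ of Lemma~\ref{th21} into a comparison between the full Lorentz norm of a vector supported on $S_{\star}^{c}$ and its $\ell_{1}$-norm, and then to read off the constant $3/2$ by substituting one convenient feasible point into the variational definition of the Lorentz norm.

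First I would pin down the companion norm $\Omega^{S_{\star}^{c}}$. By Lemma~\ref{lorentz1} every allowed set contains the apex index $p$, so $p\in S_{\star}$ and hence $S_{\star}^{c}\subseteq\{1,\dots,p-1\}$. Because $p\notin S_{\star}^{c}$, the projected cone $\mathcal{A}_{S_{\star}^{c}}=\{a_{S_{\star}^{c}}:a\in\mathcal{A}\}$ is the whole positive orthant $\mathbb{R}^{|S_{\star}^{c}|}_{++}$: any positive vector on $S_{\star}^{c}$ extends to a point of the ice-cream cone by taking the coordinates in $S_{\star}\setminus\{p\}$ small and $a_{p}$ large. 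Minimising the separable objective $\tfrac12\sum_{j\in S_{\star}^{c}}(\beta_{j}^{2}/a_{j}+a_{j})$ over this unconstrained orthant gives $a_{j}=|\beta_{j}|$, whence $\Omega^{S_{\star}^{c}}(\beta_{S_{\star}^{c}})=\lo{\beta_{S_{\star}^{c}}}$, in agreement with the gauge $g=\lo{\cdot}$ of Lemma~\ref{lorentz2}. It then suffices to show $\Omega(\beta_{S_{\star}^{c}})\le\tfrac32\lo{\beta_{S_{\star}^{c}}}$.

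For this upper bound I would not solve the defining minimisation but evaluate its objective at a single well-chosen $a$. Writing $\gamma:=\beta_{S_{\star}^{c}}$ (so $\gamma_{p}=0$ and $\gamma$ is supported in $\{1,\dots,p-1\}$), take $a_{j}=|\gamma_{j}|$ for $j\in S_{\star}^{c}$, let $a_{j}\downarrow0$ for $j\in S_{\star}\setminus\{p\}$, and set $a_{p}=\lt{\gamma}$; this lies on the boundary of $\mathcal{A}$ since $a_{p}=\sqrt{\sum_{i\le p-1}a_{i}^{2}}$. The coordinates $j\in S_{\star}\setminus\{p\}$ contribute nothing in the limit, coordinate $p$ contributes $a_{p}=\lt{\gamma}$, and each $j\in S_{\star}^{c}$ contributes $\gamma_{j}^{2}/|\gamma_{j}|+|\gamma_{j}|=2|\gamma_{j}|$, so
$$\Omega(\gamma)\le\tfrac12\Big(2\lo{\gamma}+\lt{\gamma}\Big)=\lo{\gamma}+\tfrac12\lt{\gamma}\le\tfrac32\lo{\gamma},$$
where the last step uses $\lt{\gamma}\le\lo{\gamma}$. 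This gives $C_{S_{\star}}=3/2$.

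The only delicate point is the feasibility of the chosen $a$: the cone is open, so one strictly approaches the stated boundary point through interior points and invokes the convention $0/0=0$, which is routine. The genuinely lossy step—and hence the one to flag—is replacing the exact minimiser by the $\ell_{1}$-optimal choice $a_{j}=|\gamma_{j}|$ together with the crude bound $\lt{\gamma}\le\lo{\gamma}$; it is this slack that yields the value $3/2$, consistently with the remarks elsewhere in the paper that such constants may be improvable.
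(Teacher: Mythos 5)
Your proof is correct and follows essentially the same route as the paper's: identify $\Omega^{S_{\star}^{c}}(\beta_{S_{\star}^{c}})=\lo{\beta_{S_{\star}^{c}}}$ using $p\notin S_{\star}^{c}$, then upper bound $\Omega(\beta_{S_{\star}^{c}})$ by plugging the feasible choice $a_{j}=|\beta_{j}|$, $a_{p}=\lt{\beta_{S_{\star}^{c}}}$ into the variational formula and using $\lt{\cdot}\leq\lo{\cdot}$. Your treatment is in fact slightly more careful than the paper's about absolute values and about approaching the boundary of the open cone.
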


The Lorentz norm can be generalized to include any set $P\subset\{1,..,p\}$ in the allowed sets.
The generalized convex cone is
$$\mathcal{B}:=\left\{b\in\mathbb{R}^{p}_{++}\middle| b_{j}\geq \lt{b_{P^{c}}}\text{ } \forall j\in P \right\},$$
and the generalized Lorentz norm can be defined as
$$\lVert \beta\rVert_{genLo}:=\frac{1}{2}\min\limits_{b\in\mathcal{B}}\sum\limits_{i=1}^{p}\left(\frac{\beta^{2}_{i}}{b_{i}}+b_{i}\right).$$
Now by an analogous proof to the proof of Lemma \ref{lorentz1}, we can see that the allowed sets of the generalized Lorentz norm always contain the set $P$. In particular an allowed set $S$
is of the form $S=P\cup B$, with $B\subset P^{c}$ being any subset of the complement of $P$.
The gauge function does not change, it is the $\ell_{1}$-norm and we still get a constant of 
$C_{S_{\star}}=(|P|+2)/2.$

\section{Simulations}
We look at the following linear model:
$Y=X\beta^{0} +\epsilon,$
where we have $n=100$ observations and $p=150$ variables with $\epsilon\sim\mathcal{N}(0,I)$. 
The design $X$ is randomly chosen, such that the covariance matrix has the following Toeplitz structure $\Sigma_{i,j}=0.9^{|i-j|}$.
The underlying parameter vector $\beta^{0}$ is chosen to be the regularly decreasing sequence
$$\beta_{\{1,...,s_{0}\}}:=\left(4,4-\frac{2}{s_{0}-1},4-2\cdot\frac{2}{s_{0}-1}, ... ,4-(s_{0}-2)\cdot\frac{2}{s_{0}-1},2\right)^{T},$$
where $s_{0}=|S_{0}|$ will be different values.
This structure of active set fits nicely in the wedge framework. Therefore to find a solution for the unknown $\beta^{0}$ we use the wedge
$$\hat{\beta}_{Wedge}=\amin_{\beta \in \mathbb{R}^{p}} \left\{ \ltn{Y-X\beta }+
\lambda \min_{a\in \mathcal{A}}\frac{1}{2}\sum\limits_{j=1}^{p}\left(\frac{\beta_{j}^{2}}{a_{j}}+a_{j}\right)\right\}.$$
Now we will construct confidence sets based on the two frameworks for the point-wise sets $\{1\}$,$\{2\}$,...,$\{p\}$. For each of these $p$ sets we compute $r=100$ repetitions.
To find the solution of the LASSO ($\ell_{1}$ is the gauge function in this case), the glmnet R package \citet{glmnet} has been used.
To solve the wedge the same code as in \citet{mi2} has been used. % http://www0.cs.ucl.ac.uk/staff/M.Pontil/software/sparsity.html
The following two cases have been considered: $s_{0}=5$ and $s_{0}=18$. Let us remark that $n/\log(p)\approx 20$ and $(n/\log(p))^{2/3}\approx 7$.
For each case the average coverage out of these 100 replications has been computed together with the average confidence set length.
The penalty level for the node-wise LASSO and node-wise Wedge have been chosen such that the average coverage are about the same, in order to compare their average set lengths.
Of course a reasonable penalty level for practical applications is up for debate.\\
\underline{Sparsity $s_{0}=5$:} %wedge_confidence-paralel-C.R
For a very sparse setting the simulations, as seen in Figure \ref{simi11}, show that there is no essential difference between using the node-wise LASSO or the node-wise Wedge in order to construct the 
estimate of the precision matrix.

\begin{figure}[ht!]
\hspace*{-1cm}
\centering
  \includegraphics[width = 4.4cm]{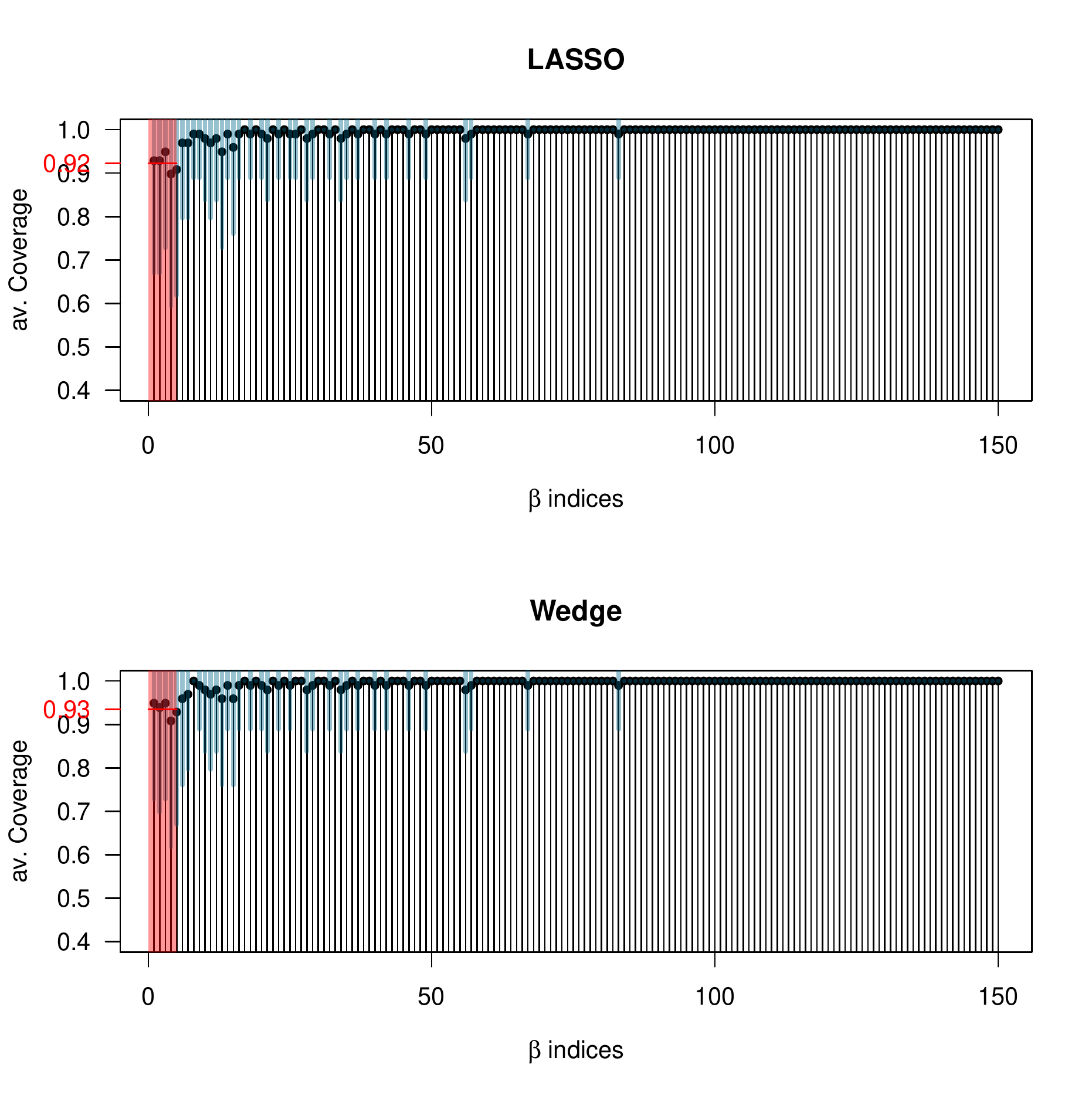}\includegraphics[width = 4.4cm]{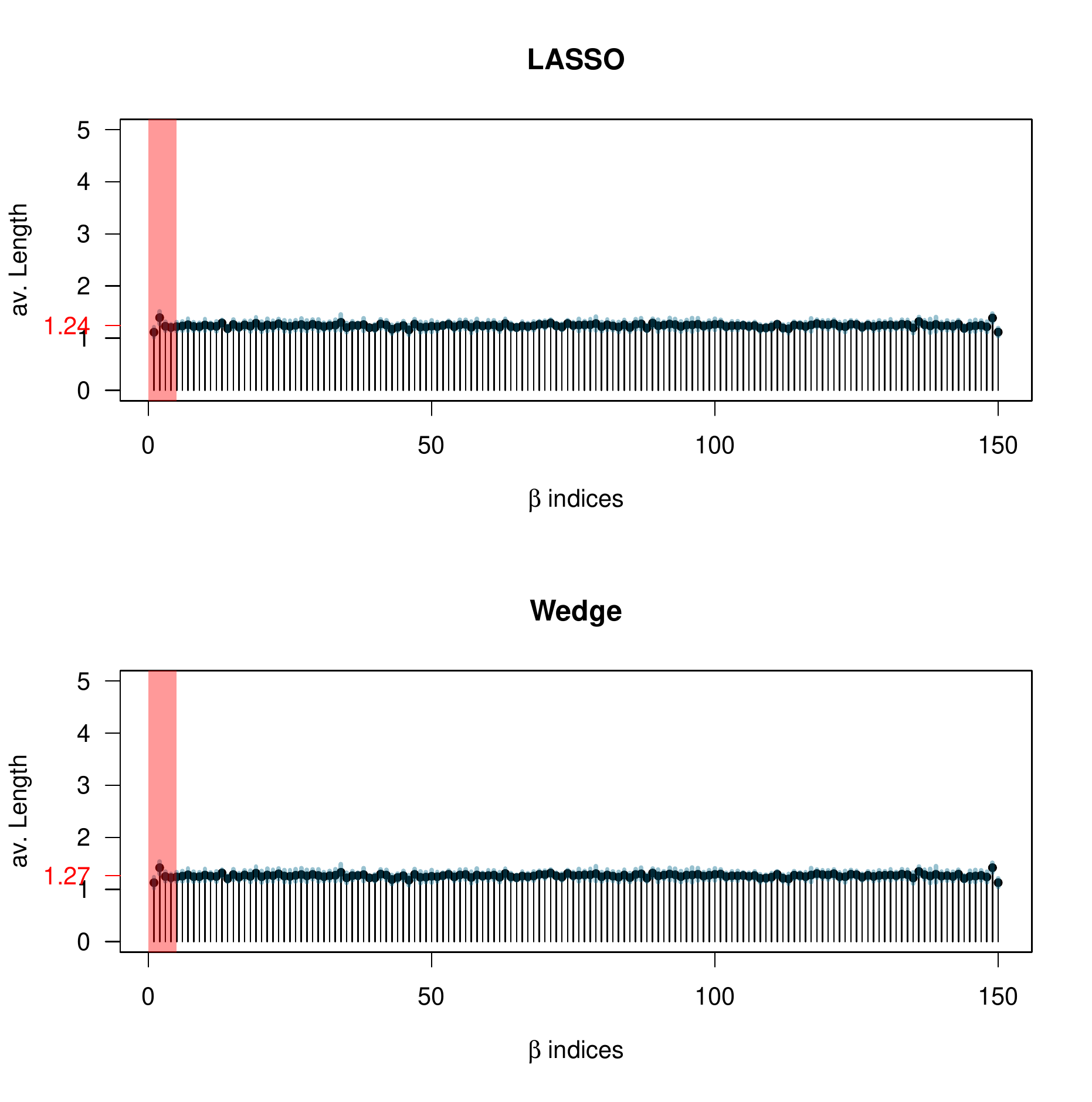}
  \hspace*{-1.5cm}
\caption{Left: Average coverage, Right: Average length, $s_{0}=5$, $\lambda_{LASSO}=15.5$, $\lambda_{Wedge}=15$ and in red are the mean values over all point-wise sets of the active set $S_{0}$.}
\label{simi11}
\end{figure}

\underline{Sparsity $s_{0}=18$:} %2wedge_confidence-paralel-C.R
Surprisingly for a less sparse setting the simulations, see Figure \ref{simi21}, still show no noticeable difference between the node-wise LASSO or the node-wise Wedge. This might indicate that
there could be a more direct way bound the estimation error expressed in the $\Omega$ norm, and that the bound of the remainder term
$\sqrt{n}\lambda C_{S_{\star}}\Upsilon_{S_{\star}}(\hat{\beta}_{J}-\beta^{0}_{J})$ might not be optimal for the wedge.
\begin{figure}[ht!]
\hspace*{-1cm}
\centering
  \includegraphics[width = 4.4cm]{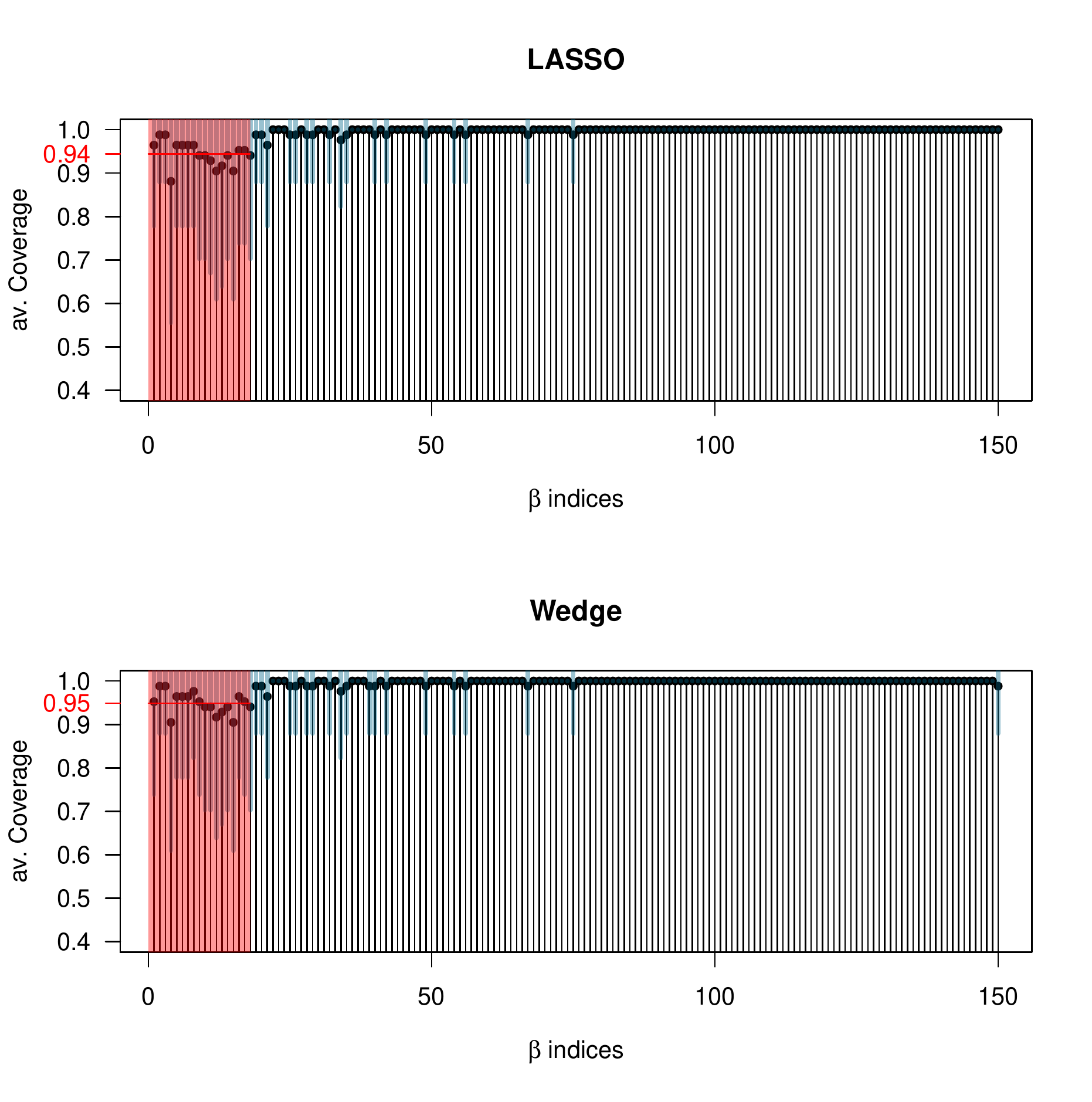}\includegraphics[width = 4.4cm]{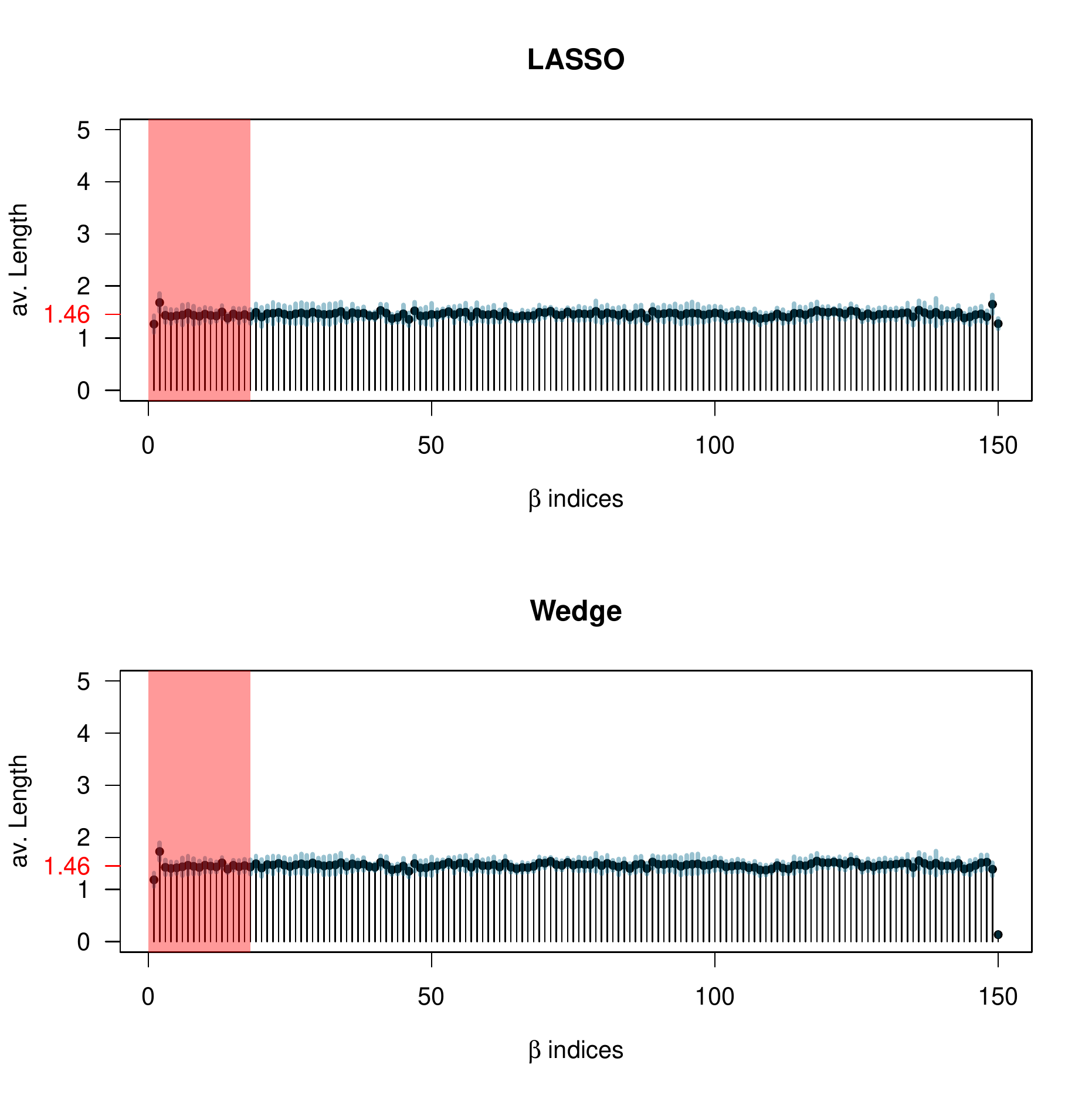}
    \hspace*{-1.5cm}
\caption{Left: Average coverage, Right: Average Length, $s_{0}=18$, $\lambda_{LASSO}=12$, $\lambda_{Wedge}=10$,  and in red are the mean values over all point-wise sets of the active set $S_{0}$.}
\label{simi21}
\end{figure}

\section{Conclusion}
Two frameworks for penalized estimators which incorporate structured sparsity patterns have been proposed. The first framework makes use of the gauge function,
which is in most cases an $\ell_{1}$ type norm due to the additivity of the lower bounding weak decomposable norms. The second framework is penalized by the structured sparse norm itself.
They are both quite general in the sense that they can be used in case of any weakly decomposable norm penalty, but they have their own properties regarding sparsity assumptions.
Interestingly the simulations suggest that at least for the presented Toeplitz case both frameworks seem to perform nearly indistinguishable, even for less strict sparsity assumptions.
Therefore it would be very interesting for future research to further understand if oracle results for the estimation error expressed in the weakly decomposable norms can be achieved.

\section{Proofs}\label{chap3:proofs}
In the dual world inequalities for norms change the direction.
\begin{lemma}\label{lem1:dwd}
 Let $\Omega(\cdot)$ and $\Upsilon(\cdot)$ be any two norms on $\mathbb{R}^{p}$ satisfying 
 $\Upsilon(\beta)\leq \Omega(\beta), \forall \beta\in \mathbb{R}^{p}.$
 Then for the corresponding dual norms we have the following inequality:
 $$\Upsilon^{*}(\omega)\geq\Omega^{*}(\omega), \forall \omega\in \mathbb{R}^{p}.$$
\end{lemma}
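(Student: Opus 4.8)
The plan is to exploit the standard duality correspondence that reverses set inclusions. First I would rewrite the dual norm using the closed unit ball rather than the unit sphere: for any norm $\Omega$ one has
$$\Omega^{*}(\omega)=\sup_{\Omega(\beta)=1}\beta^{T}\omega=\sup_{\Omega(\beta)\leq 1}\beta^{T}\omega,$$
since scaling any $\beta$ with $0<\Omega(\beta)\leq 1$ up to the sphere via $\beta/\Omega(\beta)$ only increases the linear functional $\beta^{T}\omega$ whenever it is positive (and $\beta=0$ merely contributes $0$, which cannot exceed the supremum). Denote the closed unit balls $B_{\Omega}:=\{\beta\in\mathbb{R}^{p}:\Omega(\beta)\leq 1\}$ and $B_{\Upsilon}:=\{\beta\in\mathbb{R}^{p}:\Upsilon(\beta)\leq 1\}$.

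The key step is the inclusion $B_{\Omega}\subseteq B_{\Upsilon}$. Indeed, if $\Omega(\beta)\leq 1$, then the hypothesis $\Upsilon(\beta)\leq\Omega(\beta)$ forces $\Upsilon(\beta)\leq 1$, so $\beta\in B_{\Upsilon}$. Thus the pointwise-smaller norm has the pointwise-larger unit ball, and it is precisely this reversal that produces the reversal in the dual inequality.

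Finally I would take suprema over the nested sets: since $B_{\Omega}\subseteq B_{\Upsilon}$, the supremum of $\beta^{T}\omega$ over the larger set $B_{\Upsilon}$ is at least its supremum over $B_{\Omega}$, giving
$$\Omega^{*}(\omega)=\sup_{\beta\in B_{\Omega}}\beta^{T}\omega\leq\sup_{\beta\in B_{\Upsilon}}\beta^{T}\omega=\Upsilon^{*}(\omega)$$
for every $\omega\in\mathbb{R}^{p}$, which is the claim. There is essentially no obstacle here: the only point demanding a line of care is the passage from the sphere-based definition of the dual norm given in the text to the ball-based one, after which the monotonicity of the supremum under set inclusion carries the entire argument.
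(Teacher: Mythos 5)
Your proof is correct and follows essentially the same route as the paper's: both establish the inclusion $B_{\Omega}\subseteq B_{\Upsilon}$ of the unit balls and then use monotonicity of the supremum under set inclusion. Your extra remark justifying the passage from the sphere-based to the ball-based form of the dual norm is a small additional point of care that the paper passes over silently.
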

\begin{proof}
 First, let us remark that the unit balls $B_{\Upsilon}:= \{\beta: \Upsilon(\beta)\leq 1\}$ and $B_{\Omega}:= \{\beta: \Omega(\beta)\leq 1\}$ fulfill the following
 $$B_{\Upsilon}\supset B_{\Omega}.$$
 This is due to the fact that for all $\beta\in B_{\Omega}$ we have
 $$\Upsilon(\beta)\leq\Omega(\beta)\leq 1,$$
 which means that such $\beta$ are also element of the $B_{\Upsilon}$-Ball.
 Now if we look at the definition of the dual norm together with the fact that the supremum over the set $B_{\Upsilon}$ can only be bigger than over the set $B_{\Omega}$, we get
 $$\Upsilon^{*}(\omega)=\sup_{\beta\in B_{\Upsilon}}\omega^{T}\beta\geq\sup_{\beta\in B_{\Omega}}\omega^{T}\beta = \Omega^{*}(\omega), \forall \omega\in \mathbb{R}^{p}.$$
\end{proof}

Due to the disjoint nature of the definition of $\Upsilon_{S}$ as a sum of two norms on the set $S$ and set $S^{c}$ we can get an explicit formula for the dual norm.
\begin{lemma}\label{lem:wd}
For the weakly decomposable norm it holds true that 
 $$\Upsilon^{*}_{S}(\beta)=\max\left(\Omega^{*}(\beta_{S}),\Omega_{S^{c}}^{*}(\beta_{S^{c}})\right), \forall \beta\in \mathbb{R}^{p}.$$
\end{lemma}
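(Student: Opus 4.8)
The plan is to exploit that $\Upsilon_{S}$ is an $\ell_{1}$-type combination of two norms living on the orthogonal coordinate blocks $S$ and $S^{c}$, so that its dual should be the corresponding $\ell_{\infty}$-type combination, namely a maximum. I would start from the definition
$$\Upsilon^{*}_{S}(\omega)=\sup_{\Upsilon_{S}(\beta)\leq 1}\omega^{T}\beta,$$
and use that every $\beta$ splits uniquely as $\beta=\beta_{S}+\beta_{S^{c}}$ with the two pieces supported on disjoint index sets. Because the supports are disjoint, the objective separates as $\omega^{T}\beta=\omega_{S}^{T}\beta_{S}+\omega_{S^{c}}^{T}\beta_{S^{c}}$, while the constraint reads $\Omega(\beta_{S})+\Omega^{S^{c}}(\beta_{S^{c}})\leq 1$, so the two blocks are coupled only through this single scalar budget.

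Next I would introduce the budget variables $a=\Omega(\beta_{S})$ and $b=\Omega^{S^{c}}(\beta_{S^{c}})$, which range over the simplex $\{a,b\geq 0,\ a+b\leq 1\}$. For fixed $a,b$, positive homogeneity of the dual gives
$$\sup_{\Omega(\beta_{S})\leq a}\omega_{S}^{T}\beta_{S}=a\,D_{S}(\omega_{S}),\qquad \sup_{\Omega^{S^{c}}(\beta_{S^{c}})\leq b}\omega_{S^{c}}^{T}\beta_{S^{c}}=b\,(\Omega^{S^{c}})^{*}(\omega_{S^{c}}),$$
where $D_{S}(\omega_{S}):=\sup\{\omega_{S}^{T}\gamma:\ \gamma\ \text{supported on}\ S,\ \Omega(\gamma)\leq 1\}$ is the support-restricted dual on the block $S$. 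The residual problem is then to maximize the linear function $a\,D_{S}(\omega_{S})+b\,(\Omega^{S^{c}})^{*}(\omega_{S^{c}})$ over the simplex; since its maximum over a simplex is attained at a vertex, it equals $\max\big(D_{S}(\omega_{S}),(\Omega^{S^{c}})^{*}(\omega_{S^{c}})\big)$ (the vertex $a=b=0$ never wins, as both dual norms are nonnegative).

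The last, and I expect the only delicate, step is to identify the block dual $D_{S}(\omega_{S})$ with the full dual norm $\Omega^{*}(\omega_{S})$ that appears in the statement; the $S^{c}$-block already yields $(\Omega^{S^{c}})^{*}=\Omega_{S^{c}}^{*}$ directly, since $\Omega^{S^{c}}$ is defined on the $S^{c}$-coordinates. The inequality $D_{S}(\omega_{S})\leq\Omega^{*}(\omega_{S})$ is immediate, because the feasible set of $D_{S}$ is contained in that of $\Omega^{*}$ with the same objective. For the reverse inequality I would invoke weak decomposability: for any $\beta$ with $\Omega(\beta)\leq 1$ we have $\omega_{S}^{T}\beta=\omega_{S}^{T}\beta_{S}$, and by \eqref{weak}, $\Omega(\beta_{S})\leq\Omega(\beta_{S})+\Omega^{S^{c}}(\beta_{S^{c}})\leq\Omega(\beta)\leq 1$, so $\beta_{S}$ is feasible for $D_{S}$; taking the supremum over such $\beta$ gives $\Omega^{*}(\omega_{S})\leq D_{S}(\omega_{S})$. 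Combining the two blocks yields $\Upsilon^{*}_{S}(\omega)=\max(\Omega^{*}(\omega_{S}),\Omega_{S^{c}}^{*}(\omega_{S^{c}}))$, as claimed. As a sanity check, in the $\ell_{1}$ case where $\Upsilon_{S}=\lo{\cdot}$ the formula collapses to $\linf{\omega}$, which is correct.
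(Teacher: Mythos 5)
Your proof is correct and follows essentially the same route as the paper's: split the objective over the disjoint blocks $S$ and $S^{c}$, normalize by $\Omega(\beta_{S})$ and $\Omega^{S^{c}}(\beta_{S^{c}})$, and reduce to maximizing a linear function over the budget constraint $a+b\leq 1$, which is attained at a vertex. Your explicit verification that the support-restricted block dual $D_{S}(\omega_{S})$ coincides with $\Omega^{*}(\omega_{S})$ (using weak decomposability for the reverse inequality) is a step the paper's proof leaves implicit in its lower bound, so this is a welcome bit of extra care rather than a deviation.
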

\begin{proof}Let us show how to lower and upper bound it.\\
 $\text{\underline{Inequality 1:} }$ To show "$\geq$":
 \begin{align*}
  \Upsilon_{S}^{*}(\omega)&:=\sup_{\Upsilon_{S}(\beta)=1}\beta^{T}\omega \geq \sup_{\substack{\Upsilon_{S}(\beta)=1,\\ \beta=\beta_{S}}}\beta^{T}\omega\\
  &=\sup_{\Omega(\beta_{S})=1}\beta^{T}\omega\geq \Omega^{*}(\omega_{S}) %%TODO: \text{  check if it is true!!!}.
 \end{align*}
 A similar result holds true if we restrict $\beta$ to $\beta_{S^{c}}$. Therefore the maximum lower bounds the dual of the $\Upsilon_{S}$-norm.\\
 $\text{\underline{Inequality 2:} }$ To show "$\leq$":
 \begin{align*}
  \Upsilon_{S}^{*}(\omega)&=\sup_{\Upsilon_{S}(\beta)=1}\beta^{T}\omega= \sup_{\Upsilon_{S}(\beta)=1}\left\{\beta_{S}^{T}\omega_{S}+\beta_{S^{c}}^{T}\omega_{S^{c}}\right\}\\
  &= \sup_{\Upsilon_{S}(\beta)=1}\left\{\frac{\beta_{S}^{T}\omega_{S}\Omega(\beta_{S})}{\Omega(\beta_{S})}+\frac{\beta_{S^{c}}^{T}\omega_{S^{c}}\Omega^{S^{c}}(\beta_{S^{c}})}{\Omega^{S^{c}}(\beta_{S^{c}})}\right\}\\
  &\leq \sup_{\Upsilon_{S}(\beta)=1}\left\{ \Omega^{*}(\omega_{S})\Omega(\beta_{S})+\Omega^{S^{c}}_{*}(\omega_{{S^{c}}})\Omega^{S^{c}}(\beta_{S^{c}})\right\}\\
  &=\sup_{a+b=1}\left\{ \Omega^{*}(\omega_{S})a+\Omega^{S^{c}}_{*}(\omega_{{S^{c}}})b\right\}\\
  &=\max\left(\Omega^{*}(\omega_{S}),\Omega^{S^{c}}_{*}(\omega_{{S^{c}}})\right)
  \end{align*}

\end{proof}

\begin{proof}[Proof of Lemma \ref{glemma}.]$\text{ }$\linebreak
\vspace{-5mm}
\begin{enumerate}
 \item The gauge function is again a norm on $\mathbb{R}^{p}$, because $0$ is in the convex set, see for example \citet{clarke} Theorem 2.36.
 \item First of all, the unit ball $B_{g}$ of norm $g$ contains all the unit balls $B_{\Upsilon_{S}}$ and $\operatorname{flip}_{J}(B_{\Upsilon_{S}})$, therefore
$$g(\beta)\leq \Upsilon_{S}(\beta)\text{ } \text{ } \forall \text{ } S \text{ allowed and}$$
$$g(\beta)\leq \Upsilon_{S}(\beta_{f(J)})\text{ } \text{ } \forall \text{ } S \text{ allowed.}$$
\item To prove that the dual norm of $g$ is
the maximum we need to make the following observations. First, from Lemma \ref{lem1:dwd} together with Lemma \ref{glemma} (2) we have
\begin{align*}
\Upsilon_{S}^{*}(\beta)&\leq g^{*}(\beta)\text{ } \text{ } \forall \text{ } S \text{ allowed.}
\end{align*}
Due to the fact that this holds for all allowed sets $S$ we get
$$\max\left(\max\limits_{S \text{ all.}}\Upsilon^{*}_{S}(\beta),\text{ }\max\limits_{S \text{ all.}}\Upsilon^{*}_{S}(\beta_{f(J)})\right)\leq g^{*}(\beta).$$
To prove the other inequality we need to look at the definition of the dual norm $g^{*}$:
$$g^{*}(z)=\max\limits_{g(x)\leq 1}z^{T}x.$$
Because the convex hull in the definition of $g$ is the set of all convex combinations of points in $\overline{B}\cup\operatorname{flip}_{J}(\overline{B})$ we have that
\begin{align*}
 x\in &\{y: g(y)\leq 1\}\Leftrightarrow x\in B_{g}\\
        \Leftrightarrow& \text{There exists some }n\in\mathbb{R}^{p} \text{ and } \sum_{i}^{n}\alpha_{i}=1, \alpha_{i}\geq 0,\\
        &\text{with a sequence of }b_{i}\in \overline{B}\cup\operatorname{flip}_{J}(\overline{B}), \\
        &\text{such that we can write }x=\sum_{i=1}^{n}\alpha_{i} b_{i}.
\end{align*}
That is why we can write
\begin{align*}
 g^{*}(z)&=\max\limits_{\substack{n\in \mathbb{N},\sum_{i}^{n}\alpha_{i}=1,\\b_{i}\in \overline{B}\cup\operatorname{flip}_{J}(\overline{B})}}\sum_{i=1}^{n}\left(\alpha_{i}z^{T}b_{i}\right)\\
 &\leq \max\limits_{n\in \mathbb{N},\sum_{i}^{n}\alpha_{i}=1}\sum_{i=1}^{n}\left(\alpha_{i}\cdot \max\limits_{b\in \overline{B}\cup\operatorname{flip}_{J}(\overline{B})}z^{T}b\right)\\
 &\leq 1\cdot \max\left(\max\limits_{S\text{ all.}}\Upsilon_{S}^{*}(z),\text{ }\max\limits_{S\text{ all.}}\Upsilon_{S}^{*}(z_{f(J)})\right).
\end{align*}
Thus equality holds, and one can easily see that\\
$\max\left(\max\limits_{S\text{ all.}}\Upsilon_{S}^{*}(z),\text{ }\max\limits_{S\text{ all.}}\Upsilon_{S}^{*}(z_{f(J)})\right)$ is a norm again.
Now the condition $\Omega(\beta_{f(J)})=\Omega(\beta)$ for all $\beta\in\mathbb{R}^{p}$ forces all $\Upsilon_{S}$-norms to have the same symmetrical property, 
and thus $B_{\Upsilon_{S}}=\operatorname{flip}_{J}(B_{\Upsilon_{S}})$. Therefore the one maximum can be omitted and the claim is proven.
For the characterization of the dual of the $\Upsilon_{S}$-norm we can just apply Lemma \ref{lem:wd}.

% \item By Lemma \ref{lem:wd} and the two parts of the $\Upsilon_{S}$ norms we see that the reflection property also holds in the dual case
% $$\Upsilon^{*}_{S}(\beta_{r(S)})=\max(\Omega^{*}(\beta_{S}),\Omega^{*}_{S^{c}}(\beta_{S^{c}}))=\Upsilon^{*}_{S}(\beta).$$
% Together with (2) we can get
% $$g^{*}(\beta_{r(S)})=\max_{J}\Upsilon^{*}_{J}(\beta_{r(S)})\geq \Upsilon^{*}_{S}(\beta_{r(S)})=\Upsilon^{*}_{S}(\beta).$$
% By taking the dual of the dual norm we get
% $$g(\beta_{r(S)})\leq \Upsilon_{S}(\beta).$$
\item The function $\operatorname{flip}_{J}$ leads to
$g(\beta_{J}-\beta_{J^{c}})=g(\beta).$
Hence
\begin{align*}
 g(\beta_{J^{c}})&=g(\frac{1}{2}(\beta-\beta_{J})+\frac{1}{2}\beta_{J^{c}})\\
 &\leq \frac{1}{2}g(\beta)+\frac{1}{2}g(\beta_{J}-\beta_{J^{c}})\\
 &=\frac{1}{2}g(\beta)+\frac{1}{2}g(\beta)\\
 &=g(\beta)
\end{align*}

\end{enumerate}

\end{proof}

\begin{proof}[Proof of Theorem \ref{th1}]
Then
\begin{align*}
M(\hat{b}_{J}-\bh_{J})&=\sqrt{n}\hat{\Sigma}_{J}^{-1/2}(X_{J}-X_{J^{c}}\hat{B}_{J})^{T}X_{J}/n\cdot ...\\
&\qquad \cdot\left(T_{J}^{-1}(X_{J}-X_{J^{c}}\hat{B}_{J})^{T}(X\beta^{0}-X\bh+\epsilon )\right)\\
&= \sqrt{n}\hat{\Sigma}_{J}^{-1/2}(X_{J}-X_{J^{c}}\hat{B}_{J})^{T}\left(\epsilon +X(\beta^{0}-\bh)\right)\\
&=\hat{\Sigma}_{J}^{-1/2}(X_{J}-X_{J^{c}}\hat{B}_{J})^{T}\cdot ...\\
&\qquad\cdot \left(X_{J^{c}}(\beta^{0}-\bh)_{J}+X_{J^{c}}(\beta^{0}-\bh)_{J^{c}}+\epsilon \right)/\sqrt{n}\\
&=\hat{\Sigma}_{J}^{-1/2}(X_{J}-X_{J^{c}}\hat{B}_{J})^{T}X_{J^{c}}(\beta^{0}-\bh)_{J^{c}}/\sqrt{n}+...\\
&\qquad+\hat{\Sigma}_{J}^{-1/2}(X_{J}-X_{J^{c}}\hat{B}_{J})^{T}\epsilon/\sqrt{n}+...\\
&\qquad-\hat{\Sigma}_{J}^{-1/2}(X_{J}-X_{J^{c}}\hat{B}_{J})^{T}X_{J}(\bh-\beta^{0})_{J}/\sqrt{n}
\end{align*}
We can simplify the term
\begin{align*}
\hat{\Sigma}_{J}^{-1/2}(X_{J}-X_{J^{c}}\hat{B}_{J})^{T}X_{J}(\bh-\beta^{0})_{J}/\sqrt{n}=M(\bh_{J}-\beta^{0}_{J})
\end{align*}

That is why we can conclude that
\begin{align*}
M(\hat{b}_{J}-\beta^{0}_{J})&= M(\hat{b}_{J}-\bh_{J})+M(\bh_{J}-\beta^{0}_{J})\\
&=\hat{\Sigma}_{J}^{-1/2}(X_{J}-X_{J^{c}}\hat{B}_{J})^{T}\epsilon/\sqrt{n}+...\\
&\qquad+\sqrt{n}\hat{\Sigma}_{J}^{-1/2}(X_{J}-X_{J^{c}}\hat{B}_{J})^{T}X_{J^{c}}(\beta^{0}-\bh)_{J^{c}}/n \\
&=\underbrace{\hat{\Sigma}_{J}^{-1/2}(X_{J}-X_{J^{c}}\hat{B}_{J})^{T}\epsilon/\sqrt{n}}_{\text{Gaussian Random Variable}}+\underbrace{\lambda Z (\beta^{0}-\bh)_{J^{c}}/\sqrt{n}}_{\text{Remainder Term}}
\end{align*}
where $Z$ comes from the KKT conditions which fulfills:
$$\Psi^{*}(Z)\leq 1$$
$$\operatorname{tr}(Z^{T}\hat{B}_{J})=\Psi(\hat{B}_{J}).$$
The remainder term can be bounded with the generalized Cauchy Schwartz inequality in the $\ell_{\infty}$-norm by

\begin{align*}
\lambda \linf{Z(\beta^{0}-\bh)_{J^{c}}}/\sqrt{n}&= \lambda \max_{1\leq j\leq |J|} Z_{j}(\beta^{0}-\bh)_{J^{c}}/\sqrt{n}\\
&\leq \lambda\max_{1\leq j \leq |J| }g^{*}(Z_{j})g(\beta^{0}_{J^{c}}-\bh_{J^{c}})/\sqrt{n}\\
&\leq \lambda\Psi^{*}(Z)g(\beta^{0}_{J^{c}}-\bh_{J^{c}})/\sqrt{n} \\
&\leq \lambda g(\beta^{0}_{J^{c}}-\bh_{J^{c}})/\sqrt{n}\text{ }\text{ }\text{\scriptsize(KKT conditions.)}
\end{align*}
Dividing everything by $\sigma_{0}$ leads to the result.
\end{proof}

\begin{proof}[Proof of Lemma \ref{th21}]
Let us first observe:
\begin{align*}
 \Omega(\beta^{0}-\bh)&\leq \Omega(\beta^{0}_{S_{\star}}-\bh_{S_{\star}})+\Omega(\beta^{0}_{S^{*^c}}-\bh_{S^{*^c}})\\
                      &\leq \Omega(\beta^{0}_{S_{\star}}-\bh_{S_{\star}})+\Omega^{S^{*^c}}(\beta^{0}_{S^{*^c}}-\bh_{S^{*^c}})\\
                      &\qquad-\Omega^{S^{*^c}}(\beta^{0}_{S^{*^c}}-\bh_{S^{*^c}})+\Omega(\beta^{0}_{S^{*^c}}-\bh_{S^{*^c}})\\
                      &\leq \Upsilon_{S_{\star}}(\beta^{0}-\bh)+\Delta_{S^{*^c}}(\beta^{0}-\bh).
\end{align*}
Here we define $\Delta_{S^{*^c}}(\beta):=\Omega(\beta_{S^{*^c}})-\Omega^{S^{*^c}}(\beta_{S^{*^c}})$.
But we are left with another problem, how to bound $\Delta_{S^{*^c}}$. Understanding this distance will give us a bound on how far 
apart the weakly decomposable norm and the norm from the triangle inequality are.
Now let us take the optimal constant $C_{S_{\star}}$, which may depend on the active 
set of the oracle $|S_{\star}|$, such that
$\Omega(\beta_{S^{*^c}})\leq C_{S_{\star}}\cdot\Omega^{S^{*^c}}(\beta_{S^{*^c}}),\text{ }\forall \beta\in\mathbb{R}^{p}.$
Then with this we can write
$$\Delta_{S^{*^c}}(\beta^{0}-\bh)\leq (C_{S_{\star}}-1)\Omega^{S^{*^c}}(\beta^{0}_{S^{*^c}}-\bh_{S^{*^c}})\leq (C_{S_{\star}}-1)\Upsilon_{S_{\star}}(\beta^{0}-\bh).$$
In the last inequality we have used the weak decomposability condition. Therefore $\Omega(\beta^{0}-\bh)\leq C_{S_{\star}} \Upsilon_{S_{\star}}(\beta^{0}-\bh).$
\end{proof}

\begin{proof}[Proof of Theorem \ref{3th2}]
The first part follows directly the proof of Theorem \ref{th1}, with $\Xi$-norm instead of $\Psi$-norm.
The remainder term can be bounded with the generalized Cauchy Schwartz inequality in the $\ell_{\infty}$-norm by
\begin{align*}
\lambda \linf{Z(\beta^{0}-\bh)_{J^{c}}}/\sqrt{n}&= \lambda \max_{1\leq j\leq |J|} Z_{j}(\beta^{0}-\bh)_{J^{c}}/\sqrt{n}\\
&\leq \lambda\max_{1\leq j \leq |J|}\Omega^{*}(Z_{j})\Omega(\beta^{0}_{J^{c}}-\bh_{J^{c}})/\sqrt{n}\\
&\leq \lambda\Xi^{*}(Z)\Omega(\beta^{0}_{J^{c}}-\bh_{J^{c}})/\sqrt{n} \\
&\leq \lambda \Omega(\beta^{0}_{J^{c}}-\bh_{J^{c}})/\sqrt{n}\text{ }\text{ }\text{\scriptsize(KKT conditions.)}\\
&\leq \lambda 2\Omega(\beta^{0}-\bh)
\end{align*}
The last inequality comes directly from the weak decomposability of the allowed set 
$J$: 
\begin{align*}
 \Omega(\beta^{0}_{J^{c}}-\bh_{J^{c}})&\leq \Omega\left((\beta^{0}_{J^{c}}-\bh_{J^{c}}) +(\beta^{0}_{J}-\bh_{J})-(\beta^{0}_{J}-\bh_{J})\right)\\
  &\leq \Omega(\beta^{0}-\bh)+\Omega(\beta_{J}^{0}-\bh_{J})\\
  &\leq 2\Omega(\beta^{0}-\bh).
 \end{align*}
 Now with the calculation in the proof of Lemma \ref{th21} and by dividing everything by $\sigma_{0}$ the proof is finished.
 \end{proof}

\begin{proof}[Proof of Lemma \ref{lslope}]
 First of all, let us see that $l_{p}\lo{\beta}$ indeed is a lower bound for all weakly decomposable norms of $\Omega = J_{l}$.
 From \citet{benji1} we know that for any subset $S\subset\{1,...,p\}$ we have 
 $$\Upsilon_{S}(\beta)=\sum_{j=1}^{|S|}l_{j}|\beta|_{(j,S)}+\sum_{i=1}^{|S^{c}|}l_{|S|+i}|\beta|_{(i,S^{c})}$$
 with $1\geq l_{1}\geq l_{2}\geq...\geq l_{p}>0$ and $|\beta |_{(1,S^{c})} \ge \cdots \ge| \beta |_{(r,S^{c})}$ being the ordered sequence
in $\{\beta_{i}: i \in S^{c}\}$. We can now lower bound each $l_{i}$ and $l_{j}$ by the minimum of the decreasing sequence, namely $l_{p}$. That is why we get the sought lower bound
 $$l_{p}\lo{\beta}\leq \Upsilon_{S}(\beta)\leq \Omega(\beta) \text{ }\text{ }\text{ }\forall S\subset\{1,...,p\} \text{ and all }\beta\in \mathbb{R}^{p}.$$
 Therefore $\lambda_{p}\lo{\beta}$ is a candidate for the gauge function, but we need to show that this norm is the best lower bounding norm.
 Assume by contradiction that there is another norm $g(\cdot)$ on $\mathbb{R}^{p}$ such that
 $$l_{p}\lo{\beta}\leq g(\beta) \leq \Upsilon_{S}(\beta) \text{ }\text{ }\text{ }\forall S\subset\{1,...,p\} \text{ and all }\beta\in \mathbb{R}^{p},$$
 $$\text{and that there exists } \gamma\in\mathbb{R}^{p} \text{ such that }l_{p}\lo{\gamma}< g(\gamma).$$
 Denote the $k$-th standard basis vectors in $\mathbb{R}^{p}$ as $e_{k}$. Where $e_{k}$ is the vector having a one at the $k$-th entry and zeroes otherwise. 
 Then $\gamma$ can be written in the standard basis as a combination of the standard basis vectors
 $$\gamma=v_{1}e_{1}+v_{2}e_{2}+...+v_{p}e_{p}.$$
 From the above assumption and the fact that the set without the $k$-th index $\{1, ..., p\}\smallsetminus \{k \}$, denoted briefly as $\smallsetminus\{k\}$, is an allowed set, we have 
 that for each standard basis vector $e_{k}$ the following needs to hold true 
 $$l_{p}\lo{e_{k}}\leq g(e_{k})\leq \Upsilon_{\smallsetminus\{k\}}(e_{k})\text{ }\text{ }\text{ }\forall k\in \{1,...,p\}.$$
 Inserting the values $\lo{e_{k}}=1$ and $\Upsilon_{\smallsetminus\{k\}}(e_{k})= l_{p} \text{ }\forall k\in\{1,..,p\}$ leads to
 $$l_{p}\leq g(e_{k})\leq l_{p}.$$
 Therefore we can conclude that $g(e_{k})=l_{p}$ for all $k\in\{1,...,p\}$.
 Now applying the triangle inequality tho $g$ we have
 $$g(\gamma)\leq |v_{1}|g(e_{1})+...+|v_{p}|g(e_{p})=(|v_{1}|+...+|v_{p}|)l_{p}.$$
 On the other hand we get $l_{p}\lo{\gamma}=l_{p}(|v_{1}|+...+|v_{p}|).$
 This now clearly contradicts our assumption because
 $l_{p}\lo{\gamma}\nless g(\gamma)\leq l_{p}\lo{\gamma}.$
\end{proof}

\begin{proof}[Proof of Lemma \ref{lslope2}]
 By $\Omega(\beta_{S})=\sum_{j=1}^{|S|}l_{j}|\beta|_{(j,S)}$ and upper bounding all $l_{j}, j=\{1,...,|S|\}$ we have
 $$\Omega(\beta_{S^{*^c}})/l_{1}\leq \lo{\beta_{S^{*^c}}}.$$ 
 In a similar fashion by $\Omega^{S^{c}}(\beta_{S^{c}})=\sum_{i=1}^{|S^{c}|}l_{|S|+i}|\beta|_{(i,S^{c})}$ and lower bounding all $l_{i}, i=\{|S|+1,...,p\}$, we get
 $$\Omega^{S^{*^c}}(\beta_{S^{*^c}})/l_{p}\geq \lo{\beta_{S^{*^c}}}.$$
 Combining these two inequalities leads to 
 $$\Omega^{S^{*^c}}(\beta_{S^{*^c}})l_{1}/l_{p}\geq \Omega(\beta_{S^{*^c}}).$$
 for the SLOPE penalty $C_{S_{\star}}= l_{1}/l_{p}=o(\log(p))$.
 The last equality comes from the Bonferroni $l$-sequence choice in \citet{candes1}.
\end{proof}

\begin{proof}[Proof of Lemma \ref{wedge1}]
 First we know by \citet{mi2} that $\lo{\beta}\leq\Upsilon(\beta)$ for all allowed sets $S$ and all $\beta\in\mathbb{R}^{p}$.
 Now in order to show that this is the best lower bounding norm, let us assume by contradiction that there exists another norm $g(\cdot)$ which is strictly 
 better than $\lo{\cdot}$:
 \begin{align*}
  \lo{\beta}&\leq g(\beta)\leq \Upsilon(\beta) \text{ }\forall S \text{ allowed }\forall \beta\in\mathbb{R}^{p},\\
  \exists \gamma \in\mathbb{R}^{p}\text{ such that }\lo{\gamma}&< g(\gamma)\leq \Upsilon(\gamma) \text{ }\forall S \text{ allowed}.
 \end{align*}
 Define the standard basis as $e_{k}, k\in\{1,...,p\}$ being the vector having a one at the $k$-th entry and zero entries otherwise.
 Let us fix any allowed set $S$. It is straight forward to check that
 $$\Upsilon(e_{1})=1,\text{ and }\Upsilon(e_{s+1})=1.$$
 By the assumption we get that
 $$1=\lo{e_{s+1}}\leq g(e_{s+1})\leq \Upsilon(e_{s+1})=1 \text{ }\forall S \text{ allowed.}$$
 And similarly for the first standard basis vector $e_{1}$ we have
 $$1=\lo{e_{1}}\leq g(e_{1})\leq \Upsilon(e_{1})=1 \text{ }\forall S \text{ allowed.}$$
 Now because $s\in\{1,...,p-1\}$ we get that:
 $$g(e_{k})=1, \text{ for any }k\in\{1,...,p\}.$$
 So we know the values that $g$ attains for the standard basis. With this we can conduct the following contradiction. The vector $\gamma$ has a unique representation in the standard basis 
 $\gamma=v_{1}e_{1}+v_{2}e_{2}+...+v_{p}e_{p}$,
 and therefore we can apply the triangle inequality $p$ times to get:
 \begin{align*}
   g(\gamma)&\leq |v_{1}|g(e_{1})+|v_{2}|g(e_{2})+...+|v_{p}|g(e_{p})\\
   &=|v_{1}|+|v_{2}|+...+|v_{p}|\\
   &=\lo{\gamma}
 \end{align*}
 This contradicts our assumption that $\lo{\gamma}<g(\gamma)$, and the claim is proven.
\end{proof}

\begin{proof}[Proof of Lemma \ref{wedge2}]
For any allowed set $S$, the weakly decomposable $\Upsilon$-norm consists of the following two parts
$$\Omega(\beta_{S^{c}})=\min_{a_{S^{c}}\in\mathcal{A}_{S^{c}}}\frac{1}{2}\bigg(\sum_{j\in S^{c}}\Big(\frac{\beta_{j}^{2}}{a_{j}}+a_{j}\Big)+s\cdot a_{s+1}\bigg),$$
$$\Omega^{S^{c}}(\beta_{S^{c}})=\min_{a_{S^{c}}\in\mathcal{A}_{S^{c}}}\frac{1}{2}\bigg(\sum_{j\in S^{c}}\frac{\beta_{j}^{2}}{a_{j}}+a_{j}\bigg).$$
Here we have used that $a_{j}\geq a_{j+1}$ for all $1\geq j\leq p-1$.
Because of the structure of the cone $\mathcal{A}$ we have
\begin{align*}
 \Omega(\beta_{S^{c}})&=\min\limits_{a_{S^{c}}\in\mathcal{A}_{S^{c}}}\frac{1}{2}\Big(\sum\limits_{j=s+2}^{p}\Big(\frac{\beta_{j}^{2}}{a_{j}}+a_{j}\Big)+\frac{\beta_{s+1}^{2}}{a_{s+1}}+(s+1)a_{s+1}\Big)\\
 &\leq \min\limits_{a_{S^{c}}\in\mathcal{A}_{S^{c}}}\frac{1}{2}\bigg(\sum\limits_{j=s+2}^{p}\Big(\frac{\beta_{j}^{2}}{a_{j}}+(s+1)a_{j}\Big)+\frac{\beta_{s+1}^{2}}{a_{s+1}}+(s+1)a_{s+1}\bigg)\\
 &=\sqrt{s+1}\min\limits_{a_{S^{c}}\in\mathcal{A}_{S^{c}}}\frac{1}{2}\bigg(\sum\limits_{j=s+1}^{p}\frac{\beta_{j}^{2}}{\sqrt{s+1}a_{j}}+\sqrt{s+1}a_{j}\bigg).
\end{align*}
In the second inequality we added $\sum_{j=s+2}^{p}s a_{j}\geq 0$, and in the last inequality we take $\sqrt{s+1}$ outside the minimum.
Now in this setting we know that for $a_{S^{c}}\in\mathcal{A}_{S^{c}}$ we have $a_{s+1}\geq a_{s+2}\geq ... \geq a_{p}\geq 0$. Furthermore 
$a^{'}_{S^{c}}:=(\sqrt{s+1}a_{s+1},\sqrt{s+1}a_{s+2},...,\sqrt{s+1}a_{p})^{T}\in \mathcal{A}_{S^{c}}$, in fact any sequence $a_{S^{c}}\in\mathcal{A}_{S^{c}}$ 
can be displayed by a sequence which is multiplied by $\sqrt{s+1}$. Therefore
\begin{align*}
 \Omega(\beta_{S^{c}})&\leq \sqrt{s+1}\min_{a^{'}_{S^{c}}\in\mathcal{A}_{S^{c}}}\frac{1}{2}\bigg(\sum_{j=s+1}^{p}\frac{\beta_{j}^{2}}{a^{'}_{j}}+a^{'}_{j}\bigg)\\
 &\leq\sqrt{s+1}\cdot\Omega^{S^{c}}(\beta_{S^{c}})
\end{align*}
\end{proof}

\begin{proof}[Proof of Lemma \ref{grwedge1}]
 The $\ell_{2}$-norm does not have any non trivial active sets, and the $g$-dimensional wedge norm has active sets $S=\{1,...,s\}$ for any $s\in\{1,...,g\}$. 
 Combining theses facts leads to the conclusion that only for active sets of the form $S=\cup_{i\in S_{g}}G_{i}$ we have weak decomposability:
 $$\lVert\beta_{S} \rVert_{grW}+\lVert\beta_{S^{c}} \rVert_{grW}\leq \lVert\beta \rVert_{grW}.$$
 Because of the definition of the group wedge as a composition of the wedge and $\ell_{2}$-norm this is the best lower bound.
 For the gauge function $g$ it is easy to see that by applying Lemma \ref{wedge1}, we get the Group LASSO.
\end{proof}

\begin{proof}[Proof of Lemma \ref{grwedge2}]$\text{ }$\\
\begin{enumerate}\vspace{-5mm}
 \item $\lVert\beta\rVert_{grW}=0\text{ }\Longleftrightarrow \text{ }\lt{\beta_{G_{i}}}=0\text{ } \forall i \in \{1,..,g\}\Longleftrightarrow\beta \equiv 0.$
 \item The following calculations hold true:
       \begin{align*}
        \lVert a\beta\rVert_{grW}&= \lVert(\lt{a\beta_{G_{s+1}}},...,\lt{a\beta_{G_{g}}})^{T}\rVert_{W}\\
        &= \lVert a(\lt{\beta_{G_{s+1}}},...,\lt{\beta_{G_{g}}})^{T}\rVert_{W}\\
        &= a \lVert \beta\rVert_{grW}.
       \end{align*}
 \item The triangle inequality holds due to the properties of the wedge and $\ell_{2}$-norms.
       \begin{align*}
        \lVert &\beta+\gamma\rVert_{grW}= \lVert(\lt{\beta_{G_{s+1}}+\gamma_{G_{s+1}}},...,\lt{\beta_{G_{g}}+\gamma_{G_{g}}})^{T}\rVert_{W}\\
        &\leq  \lVert(\lt{\beta_{G_{s+1}}}+\lt{\gamma_{G_{s+1}}},...,\lt{\beta_{G_{g}}}+\lt{\gamma_{G_{g}}})^{T}\rVert_{W}\\
        &\leq  \lVert\beta^{\mathcal{G}}+\gamma^{\mathcal{G}}\rVert_{W}\\
        &\leq  \lVert\beta^{\mathcal{G}}\rVert_{W}+\lVert\gamma^{\mathcal{G}}\rVert_{W} = \lVert \beta\rVert_{grW} + \lVert\gamma\rVert_{grW}
       \end{align*}

\end{enumerate}

\end{proof}

\begin{proof}[Proof of Lemma \ref{grwedge3}]
 By applying Lemma \ref{wedge2} in this context, together with $S_{\star}$ being the optimal active groups, we immediately get the desired result.
\end{proof}

\begin{proof}[Proof of Lemma \ref{lorentz1}]
  By \citet{sara1} we know that for the structured sparsity norms, as introduced in \citet{mi2}, it holds that
  $$S\text{ is an allowed set }\Longleftrightarrow \mathcal{A}_{S}:=\left\{a_{S}: a\in\mathcal{A}\right\}\subset\mathcal{A}.$$
  Let us distinguish two cases, in order to proof the lemma. \\
  \underline{Case 1:} Assume $p\notin S$.\\
  Therefore $A_{S}$ consists of vectors with the $p$-th variable set to zero. This means that there exists at least one vector $a$ such that $a_{S}$ is not in $\mathcal{A}$
  $$a_{p,S}=0\ngeq \lt{\begin{pmatrix}a_{1}\\\smash{\vdots}\\a_{p-1}\end{pmatrix}_{S}}.$$
  In other words $\mathcal{A}_{S}\nsubseteq \mathcal{A}$. Therefore sets $S$ which do not contain $p$ cannot be allowed sets.\\
  \underline{Case 2:} Assume that the set $S$ satisfies $S\ni p$.\\
  For each vector $a_{S}$ in $A_{S}$ we have
  $$a_{p,S}=a_{p}\geq \lt{\begin{pmatrix}a_{1}\\\smash{\vdots}\\a_{p-1}\end{pmatrix}}\geq \lt{\begin{pmatrix}a_{1}\\\smash{\vdots}\\a_{p-1}\end{pmatrix}_{S}}.$$
  The first inequality is due to $a$ being in $\mathcal{A}$. For the second inequality it suffices to see that the $\ell_{2}$ norm can only decrease by setting certain
  values to zero. therefore we know that any set $S$ which contains $p$ fulfills $\mathcal{A}_{S}\subset \mathcal{A}$.
\end{proof}

\begin{proof}[Proof of Lemma \ref{lorentz2}]
 Again by \citet{mi2} we know that $\lo{\beta}\leq\Upsilon(\beta)$ for all allowed sets $S$ and all $\beta\in\mathbb{R}^{p}$.
 Define the standard basis as $e_{k}, k\in\{1,...,p\}$. Let us fix any allowed set $S$ from Lemma \ref{lorentz1}. 
 We can calculate that
 $$\Upsilon(e_{k})=\sqrt{2} \text{ if }k\in S\smallsetminus\{p\}, \Upsilon(e_{k})=1 \text{ if }k\notin S\smallsetminus\{p\},$$
 $$\Upsilon(e_{p})=1.$$
 Taking the special allowed set $S=\{g\}$ we have
 $$1=\lo{e_{k}}\leq g(e_{k})\leq \Upsilon_{\{g\}}(e_{k})=1, \forall k\in\{1,..,p\}.$$
 This leads to $g(e_{k})=1 , \forall k\in\{1,..,p\}$. Therefore we can use the same idea of the proof from Lemma \ref{wedge1}, and we get that $g(\cdot)=\lo{\cdot}$.
\end{proof}

\begin{proof}[Proof of Lemma \ref{lorentz3}]
 We have that 
 $$\Omega^{S^{c}}(\beta_{S^{c}})=\min_{a_{S^{c}}\in\mathcal{A}_{S^{c}}}\frac{1}{2}\sum_{j\in S^{c}}\left(\frac{\beta^{2}_{j}}{a_{j}}+a_{j}\right)=\lo{\beta_{S^{c}}}.$$
 This is due to $p\notin S^{c}$ and therefore the $\{a_{j}: j\in S^{c}\}$ can be chosen independently of each other, leading to the minimum $a_{j}=\beta_{j}$.
 Furthermore we have the following upper bound:
 \begin{align*}
  \Omega(\beta_{S^{c}})&= \min_{a\in\mathcal{A}}\frac{1}{2}\sum_{j\in S^{c}}\left(\frac{\beta^{2}_{j}}{a_{j}}+a_{j}\right)+ a_{p}\\
  &\leq \min_{a\in\mathcal{A}}\frac{1}{2}\sum_{j\in S^{c}}\left(2\beta_{j}\right)+\lVert \beta_{S^{c}}\rVert_{2} \text{ with }a_{j}=\beta_{j}\\
  &= \lo{\beta_{S^{c}}}+\frac{1}{2}\lVert\beta_{S^{c}}\rVert_{2}\\
  &\leq \frac{3}{2}\lo{\beta_{S^{c}}}=\frac{3}{2}\Omega^{S^{c}}(\beta_{S^{c}}).
 \end{align*}
 Which leads to the desired constant.

\end{proof}

\section*{Appendix: A refined Sharp Oracle Inequality}
Let us first remind us of the definition of the theoretical lambda\\
$\lambda^{m}:=\max\left(\Omega^{*}(\epsilon^{T}X_{S_{\star}}),\Omega^{S_{\star}^{c}*}(\epsilon^{T}X_{S_{\star}^{c}})\right)/n=\Upsilon_{S_{\star}}^{*}\left(\epsilon^{T}X\right)/n.$
Lemma \ref{th11} refines the sharp oracle result from \citet{sara1}. In particular, the sharp oracle inequality from \citet{sara1} measures a variation of the estimation error in the following way
$\Omega(\bh_{S_{\star}}-\beta^{*})+\Omega^{S^{c}_{\star}*}(\bh_{S^{c}_{\star}}),$
with $S_{\star}=\supp(\beta^{*})$. Let us remark here that the optimal oracle parameter $\beta^{*}$ may not be equal to $\beta^{0}$. Therefore we have no guarantee to get an upper bound on the 
estimation error expressed as $\Upsilon_{S_{\star}}(\bh-\beta^{0})$. But this is needed for both confidence frameworks to work. 
Therefore we will rework Theorem 4.1 from \citet{sara1} to make $S$ and $\beta$ independent of each other.
Let us furthermore define the $\Omega$-effective sparsity as in \citet{sara1} and denote it $\Gamma^{2}_{\Omega}(L,S)$.
\begin{lemma}[Refined Sharp Oracle Inequality]\label{th11}
 Assume that  $0\leq \delta< 1$, and also that $\frac{\lambda^{m}}{\lambda}= c$ with $0<c<1$.
 We invoke weak decomposability for $S$ and $\Omega$. Here the active set $S$ and parameter vector $\beta$ can be chosen independently.
 Then it holds true that
% 
%  \scalebox{0.9}[1]{$\ltn{X(\bh-\bo )}+2\left[\delta(\lambda+\lambda^{m})\Omega(\bh_{S}-\beta_{S})+\delta(\lambda-\lambda^{m})\Omega^{S^{c}}(\bh_{S^{c}}-\beta_{S^{c}})\right]$}
% \begin{equation}\scalebox{0.9}[1]{$\leq \ltn{X(\beta-\bo )}+ \left[ (1+\delta)(\lambda+\lambda^{m})\right]^{2} \GO^{2} (L_{S},S)+4\lambda\Omega(\beta_{S^{c}}),$}
%     \label{3eq:theoremm}
%  \end{equation}
  \begin{align}\label{3sharp}
%   \ltn{X(\bh-\bo )}&\leq \min_{\beta, S}\left[\ltn{X(\beta-\bo )}+C_{0}\lambda^{2}\cdot \GO^{2} (L_{S},S)+\right.\nonumber\\
%   &\qquad\qquad \left. +4\lambda\Omega(\beta_{S^{c}})\right],\nonumber\\
  \Upsilon_{S_{\star}}(\bh-\beta^{0})&\leq \min_{\beta,S,\delta}\left(\Upsilon_{S}(\beta^{0}-\beta) +C_{1}\lambda\GO^{2}(L_{S},S)+\right. \nonumber\\
  &\qquad\qquad \left. +\frac{C_{2}}{\lambda}\ltn{X(\beta-\beta^{0})}+C_{3}\Omega(\beta_{S^{c}})\right),
  \end{align}
 with $L_{S}:= \frac{\lambda+\lambda^{m}}{\lambda-\lambda^{m}}\frac{1+\delta}{1-\delta}$, constants $C_{1}=\frac{[(1+\delta)(1+c)]^{2}}{2\delta(1-c)}$, 
 $C_{2}=\frac{1}{2\delta(1-c)}$, $C_{3}=\frac{2}{\delta(1-c)}$ and
 
 %Furthermore we get the two oracle inequalities
 % \begin{align}\label{3sharp}
 %   \ltn{X(\bh-\bo )}&\leq \min_{\beta, S}\left[\ltn{X(\beta-\bo )}+C_{0}\lambda^{2}\cdot \GO^{2} (L_{S},S)+\right.\nonumber\\
 %   &\qquad\qquad \left. +4\lambda\Omega(\beta_{S^{c}})\right],\nonumber\\
 % \Upsilon_{S_{\star}}(\bh-\beta^{0})&\leq \min_{\beta,S,\delta}\left(\Upsilon_{S}(\beta^{0}-\beta) +C_{1}\lambda\GO^{2}(L_{S},S)+\right. \nonumber\\
 % &\qquad\qquad \left. +\frac{C_{2}}{\lambda}\ltn{X(\beta-\beta^{0})}+C_{3}\Omega(\beta_{S^{c}})\right),
 % \end{align}
 % with constants $C_{0}=(1+c^2)$, $C_{1}=\frac{[(1+\delta)(1+c)]^{2}}{2\delta(1-c)}$, $C_{2}=\frac{1}{2\delta(1-c)}$, $C_{3}=\frac{2}{\delta(1-c)}$ and
\begin{align*}
 S_{\star}:=\amin_{S}\min_{\beta,\delta}\bigg[&\Upsilon_{S}(\beta^{0}-\beta) +C_{1}\lambda\GO^{2}(L_{S},S)+...\bigg.\\
 &\quad\bigg. +\frac{C_{2}}{\lambda}\ltn{X(\beta-\beta^{0})}+C_{3}\Omega(\beta_{S^{c}})\bigg].
\end{align*}
\end{lemma}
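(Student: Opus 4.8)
The plan is to rerun the sharp-oracle argument of \citet{sara1}, but to treat the allowed set $S$, the reference vector $\beta$ and the slack $\delta$ as three genuinely free parameters, coupling them only at the final minimization. First I would write down the two-point inequality expressing optimality of $\bh$ in \eqref{sromega}: for every $\beta\in\mathbb{R}^{p}$, $\ltn{Y-X\bh}+\lambda\Omega(\bh)\leq \ltn{Y-X\beta}+\lambda\Omega(\beta)$. Substituting $Y=X\bo+\epsilon$, expanding the empirical squared norms and cancelling the common $\ltn{\epsilon}$ gives $\ltn{X(\bh-\bo)}+\lambda\Omega(\bh)\leq \ltn{X(\beta-\bo)}+\tfrac{2}{n}\epsilon^{T}X(\bh-\beta)+\lambda\Omega(\beta)$. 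On the event implicit in the definition of $\lambda^{m}$ I would bound the noise term by the generalized Cauchy--Schwarz inequality attached to the pair $(\Upsilon_{S},\Upsilon_{S}^{*})$, using Lemma \ref{lem:wd}: $\tfrac{2}{n}\epsilon^{T}X(\bh-\beta)\leq 2\,\Upsilon_{S}^{*}(\epsilon^{T}X/n)\,\Upsilon_{S}(\bh-\beta)=2\lambda^{m}\Upsilon_{S}(\bh-\beta)$.

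Next I would insert the structure of $\Omega$. Setting $v:=\bh-\beta$, weak decomposability applied to $\Omega(\bh)$ and the triangle inequality applied to $\Omega(\beta)$ --- after cancelling the shared term $\lambda\Omega(\beta_{S})$ --- leave a contraction coefficient $(\lambda-\lambda^{m})$ multiplying $\Omega^{S^{c}}(v_{S^{c}})$, an inflated coefficient $(\lambda+\lambda^{m})$ multiplying $\Omega(v_{S})$, and a residual proportional to $\Omega(\beta_{S^{c}})$; this residual is the entire cost of permitting $\supp(\beta)\neq S$ and is exactly what becomes the $C_{3}\Omega(\beta_{S^{c}})$ summand. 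Rebalancing the two prediction errors through $\delta$, so that the coefficient of $\ltn{X(\bh-\bo)}$ stays pinned near one, introduces the factors $(1\pm\delta)$ and shows that $v$ obeys the cone inequality $\Omega^{S^{c}}(v_{S^{c}})\leq L_{S}\,\Omega(v_{S})$ with $L_{S}=\frac{\lambda+\lambda^{m}}{\lambda-\lambda^{m}}\frac{1+\delta}{1-\delta}$ (in the complementary regime, where the cone is violated, $\Omega(v_{S})$ is dominated by $\Omega^{S^{c}}(v_{S^{c}})$ and the left-hand contraction term settles the bound directly).

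On the cone the $\Omega$-effective sparsity $\GO(L_{S},S)$ applies, giving $\Omega(v_{S})\leq \GO(L_{S},S)\,\ltsr{Xv}$. A Young-type split of $\lambda\GO(L_{S},S)\ltsr{Xv}$ into a $\lambda^{2}\GO^{2}(L_{S},S)$ piece and a controlled fraction of $\ltn{Xv}$, with $\ltn{Xv}$ then reabsorbed into $\ltn{X(\bh-\bo)}$ and $\ltn{X(\beta-\bo)}$ via $\ltsr{Xv}\leq\ltsr{X(\bh-\bo)}+\ltsr{X(\beta-\bo)}$, produces a bound on $(\lambda-\lambda^{m})\Upsilon_{S}(\bh-\bo)$. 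Dividing through by $\lambda$ and writing $c=\lambda^{m}/\lambda$ (so $\lambda-\lambda^{m}=\lambda(1-c)$) lowers every power of $\lambda$ by one and reproduces the stated constants $C_{1}=\frac{[(1+\delta)(1+c)]^{2}}{2\delta(1-c)}$, $C_{2}=\frac{1}{2\delta(1-c)}$, $C_{3}=\frac{2}{\delta(1-c)}$ together with the bound on $\Upsilon_{S}(\bh-\bo)$. Since nothing in the derivation tied $S$, $\beta$ and $\delta$ together, the estimate holds simultaneously over all allowed $S$, all $\beta$ and all admissible $\delta$; minimizing the right-hand side and calling the minimizing set $S_{\star}$ delivers \eqref{3sharp} and the displayed formula for $S_{\star}$.

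The hard part will be the bookkeeping of the decoupling step. In \citet{sara1} one takes $S=\supp(\beta)$, so $\Omega(\beta_{S^{c}})=0$ and the cone argument is frictionless; here the leftover mass $\Omega(\beta_{S^{c}})$ must be carried through both the weak-decomposability estimate and the cone inequality without degrading the contraction factor $(\lambda-\lambda^{m})$, and it is this term that ultimately forces the extra $C_{3}\Omega(\beta_{S^{c}})$ in \eqref{3sharp}. A secondary subtlety is the self-referential definition $\lambda^{m}=\Upsilon_{S_{\star}}^{*}(\epsilon^{T}X)/n$: the per-set inequality should be stated first with $\Upsilon_{S}^{*}(\epsilon^{T}X)/n$ in place of $\lambda^{m}$ and only specialized to $S_{\star}$ after the minimization, so that the noise event, the constants and the set actually selected remain mutually consistent.
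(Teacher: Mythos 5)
Your outline is essentially the paper's own proof, which is stated in one line as ``follows directly from the proof of the main theorem in \citet{sara1} and the triangle inequality'': you rerun the \citet{sara1} basic-inequality argument while decoupling $S$ from $\supp(\beta)$, and the leftover mass $\Omega(\beta_{S^{c}})$ you track is exactly the source of the $C_{3}$ term, while the triangle inequality the paper alludes to is the step $\Upsilon_{S}(\bh-\beta^{0})\leq\Upsilon_{S}(\bh-\beta)+\Upsilon_{S}(\beta-\beta^{0})$ that produces the first summand in \eqref{3sharp} (worth making explicit, since your sketch passes directly to $\Upsilon_{S}(\bh-\beta^{0})$). Your closing caveat about stating the per-set inequality with $\Upsilon_{S}^{*}(\epsilon^{T}X)/n$ before specializing to $S_{\star}$ is a correct and useful observation that the paper leaves implicit.
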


\begin{proof}
Follows directly from the proof of the main theorem in \cite{sara1} and the triangle inequality.
\end{proof}

% use section* for acknowledgment
%\section*{Acknowledgment}

%The authors would like to thank...

% Can use something like this to put references on a page
% by themselves when using endfloat and the captionsoff option.
%\ifCLASSOPTIONcaptionsoff
%  \newpage
%\fi
\newpage

% trigger a \newpage just before the given reference
% number - used to balance the columns on the last page
% adjust value as needed - may need to be readjusted if
% the document is modified later
%\IEEEtriggeratref{8}
% The "triggered" command can be changed if desired:
%\IEEEtriggercmd{\enlargethispage{-5in}}

% references section

% can use a bibliography generated by BibTeX as a .bbl file
% BibTeX documentation can be easily obtained at:
% http://mirror.ctan.org/biblio/bibtex/contrib/doc/
% The IEEEtran BibTeX style support page is at:
% http://www.michaelshell.org/tex/ieeetran/bibtex/
%\bibliographystyle{IEEEtran}
% argument is your BibTeX string definitions and bibliography database(s)
%\bibliography{IEEEabrv,../bib/paper}
%
% <OR> manually copy in the resultant .bbl file
% set second argument of \begin to the number of references
% (used to reserve space for the reference number labels box)
%%%%%%%%%%%%%%%%%%%%%%%%%%%%%%
                             %
\bibliography{reference2}{}  %
\bibliographystyle{plainnat} %
                             %
%%%%%%%%%%%%%%%%%%%%%%%%%%%%%%
\end{document}